\renewcommand{\a}{\alpha}
\renewcommand{\b}{\beta}
\renewcommand{\d}{\delta}
\newcommand{\D}{\Delta}
\newcommand{\cF}{{\mathcal F}}
\newcommand{\cC}{{\mathcal C}}
\newcommand{\cT}{{\mathcal T}}
\newcommand{\cB}{{\mathcal B}}
\newcommand{\cL}{{\mathcal L}}
\newcommand{\cE}{{\mathcal E}}
\newcommand{\cV}{{\mathcal V}}
\newcommand{\cP}{{\mathcal P}}
\newcommand{\cH}{{\mathcal H}}
\newcommand{\cW}{\mathcal W}
\newcommand{\cZ}{\mathcal Z}
\newcommand{\bR}{\mathbb R}
\newcommand{\bP}{\mathbb P}
\newcommand{\be}{\begin{equation}}
\newcommand{\ee}{\end{equation}}
\newcommand{\tr}{\mathrm{tr}}
\newcommand{\beaa}{\begin{eqnarray*}}
\newcommand{\bea}{\begin{eqnarray}}
\newcommand{\beal}[1]{\begin{eqnarray}\label{#1}}
\newcommand{\bean}{\begin{eqnarray}\nonumber}
\newcommand{\beadl}[1]{\begin{deqarr}\label{#1}}
\newcommand{\eeadl}[1]{\arrlabel{#1}\end{deqarr}}
\newcommand{\eeal}[1]{\label{#1}\end{eqnarray}}
\newcommand{\eead}[1]{\end{deqarr}}
\newcommand{\eea}{\end{eqnarray}}
\newcommand{\eeaa}{\end{eqnarray*}}
\newcommand{\p}{\partial}
\renewcommand{\to}{\rightarrow}
\DeclareMathOperator{\vol}{vol}
\renewcommand{\phi}{\varphi}
\renewcommand{\epsilon}{\varepsilon}
\renewcommand{\hat}{\widehat}
\newcommand{\<}{\langle}
\renewcommand{\>}{\rangle}
\newcommand{\w}{\widetilde}
\theoremstyle{plain}
\newtheorem{theorem}{Theorem}[section]
\newtheorem{remark}[theorem]{Remark}
\newtheorem{lemma}[theorem]{Lemma}
\newtheorem{Proposition}[theorem]{Proposition}
\newtheorem{proposition}[theorem]{Proposition}
\newtheorem{corollary}[theorem]{Corollary}
\newtheorem{conjecture}[theorem]{Conjecture}
\theoremstyle{definition}
\newtheorem{definition}[theorem]{Definition}
\def\blacksquare{\hbox to .60em {\vrule width .60em height .60em}}
\numberwithin{equation}{section}
\begin{document}

\title[ ]{On Mass-minimizing Extensions of Bartnik Boundary Data }

\author{Zhongshan An}

\address{Department of Mathematics, 
University of Connecticut,
Storrs, CT 06269}
\email{zhongshan.an@uconn.edu}

\begin{abstract}
We prove that the space of initial data sets which solve the constraint equations and have fixed Bartnik boundary data is a Banach manifold. Moreover if an initial data set on this constraint manifold is a critical point of the ADM total mass, then it must admit a generalised Killing vector field which is asymptotically proportional to the ADM energy-momentum vector. 
\end{abstract}

\maketitle
\textbf{MSC 2020}: 35J25, 58D17, 58C15, 58J05, 83C05.

\textbf{Keywords}: Bartnik quasi local mass, Einstein equations, elliptic boundary value problems, constraint manifold.

\section{Introduction}
The Bartnik quasi local mass is one of the most interesting and well-studied notions of quasi local mass in general relativity. For an initial data set  $(\Omega, g_0, K_0)$, which consists of a compact 3-manifold $\Omega$ with nonempty boundary $\partial\Omega$, a Riemannian metric $g_0$ and a symmetric (0,2)-tensor $K_0$ defined on $\Omega$, the Bartnik quasi local mass is defined as (cf.\cite{B1})
\begin{equation}\label{Bmass}
m_{B}(\Omega,g_0,K_0)=\text{inf}\big\{\mbox{the ADM total mass }m_{\text{ADM}}(M,g,K)\big\}.
\end{equation}
Here the infimum is taken over all \textit{admissible extensions} 
\footnote{An extension $(M,g,K)$ is called admissible (cf.\cite{B1}) if it satisfies the dominant energy condition and certain decay conditions so that $m_{\rm ADM}$ is well-defined for the glued data $(M\cup_{\p M}\Omega,g,K)$; in addition $(M\cup_{\p M}\Omega,g,K)$ satisfies certain no-horizon condition. }
$(M,g,K)$ -- asymptotically flat initial data sets such that the boundary $\p M$ can be identified with $\p\Omega$ via some diffeomorphism $\partial M\cong\partial\Omega$, and the following \textit{Bartnik boundary data} of $(M,g,K)$ equals to that of $(\Omega,g_0,K_0)$ along the boundary 
\begin{equation}\label{Bcon}
g_{\partial M}=(g_0)_{\partial\Omega},~H_{\partial M}=H_{\partial\Omega},~\tr_{\partial M}K=\tr_{\partial\Omega}K_0,~\omega_{\partial M}=\omega_{\partial\Omega}.
\end{equation}
In the above, $g_{\partial M}$ is the metric on the boundary induced by $g$; 
$H_{\partial M}$ is the mean curvature of the boundary $\partial M\subset (M,g)$, i.e. $H_{\p M}={\rm div}_g{\bf n}$ with ${\bf n}$ denoting the unit normal vector on the boundary $\p M$ pointing into $M$; 
$\tr_{\partial M}K$ is the trace (with respect to $g_{\p M}$) of the tensor $K|_{\partial M}$ on $\p M$ induced by $K$;
and $\omega_{\partial M}$ is the connection 1-form on $\p M$ defined by $\omega_{\partial M}=K(\mathbf n)|_{\p M}$, i.e. the normal-tangential components $K$ on the boundary. The Bartnik boundary data 
$((g_0)_{\partial\Omega},H_{\partial\Omega},\tr_{\partial\Omega}K_0,\omega_{\partial\Omega})$ of the compact initial data set has the same meaning, except that $H_{\p\Omega}$ and $\omega_{\p\Omega}$ are defined with respect to the unit normal vector on the boundary $\p\Omega$ pointing out of $\Omega$.

Various geometric conditions across the boundary $\p M$ have been studied in the literature, such as $(M,g,K)$ extends $(\Omega,g_0,K_0)$ smoothly or the mean curvature is non-increasing ($H_{\p M}\leq H_{\p \Omega}$). 
The Bartnik boundary condition \eqref{Bcon} is natural in various aspects. On one hand, it ensures that the Hamiltonian constraint $u$ and momentum constraint $Z$ (cf. \eqref{cons} below) can be distributionally well-defined for the glued initial data set $(M\cup_{\p M}\Omega, g, K)$.
In this way, it is reasonable to impose the dominant energy condition ($u\geq |Z|$) and expect that the resulting total ADM mass of the complete manifold $M\cup_{\p M}\Omega$ and hence also the Bartnik quasi local mass of $\Omega$ are non-negative based on the positive mass theorem (cf. \cite{SY}).
Moreover, the Bartnik boundary data also arises naturally from a Hamiltonian analysis of the vacuum Einstein equations, which we will see in the discussion to follow. 

A well-known conjecture on the Bartnik quasi local mass proposed by Bartnik is:
\begin{conjecture}
If the infimum in \eqref{Bmass} is achieved, it must be realized by a stationary vacuum extension -- an extension $(M,g,K)$ which can be embedded into a stationary vacuum spacetime as an initial data set. 
\end{conjecture}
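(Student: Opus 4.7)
The plan is to exploit the Banach-manifold structure of the constraint manifold with fixed Bartnik boundary data, derive first-order Lagrange-multiplier conditions at a minimizer, and then evolve the resulting generalized Killing Initial Data (KID) to a bona fide spacetime Killing vector field, thereby producing a stationary vacuum extension.

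\textbf{Step 1 (Banach manifold setup).} I would work with pairs $(g,K)$ on an asymptotically flat three-manifold $M$ with $\partial M\cong\partial\Omega$, in suitable weighted Sobolev spaces $H^k_{-\tau}$, restricted to those with the prescribed Bartnik boundary data. Smoothness of the constraint map $\Phi(g,K)=(u(g,K),Z(g,K))$ and of $m_{\rm ADM}$ are routine in this functional-analytic setting. Surjectivity of the linearization of $\Phi$ together with the boundary restriction then gives a Banach-manifold structure on the vacuum locus $\cC=\Phi^{-1}(0)$ via the implicit function theorem. Any admissible extension realizing the infimum in \eqref{Bmass} must lie in $\cC$: otherwise one could reduce the matter content infinitesimally while keeping the Bartnik data fixed and decrease $m_{\rm ADM}$, contradicting minimality.

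\textbf{Step 2 (Critical-point condition).} At a minimizer $(M,g,K)\in\cC$ the differential $dm_{\rm ADM}$ must annihilate the tangent space $T_{(g,K)}\cC$. By Banach-space Lagrange multipliers, there then exist a scalar $N$ and a vector field $X$ on $M$ with $D\Phi^*(N,X)=dm_{\rm ADM}$ in the appropriate dual sense, modulo boundary terms killed by the Bartnik-preserving variations. A direct computation identifies $D\Phi^*(N,X)=0$ with the generalized KID system for $(g,K)$, and matching the boundary contribution of $dm_{\rm ADM}$ at infinity forces the asymptotics of $(N,X)$ to be proportional to the ADM energy-momentum vector of $(M,g,K)$.

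\textbf{Step 3 (From KID to stationary spacetime).} Given vacuum initial data $(M,g,K)$ together with a KID $(N,X)$, Moncrief's classical theorem lifts $(N,X)$ to a Killing vector field $\xi$ in the maximal vacuum Cauchy development $(\mathbf{M},\mathbf{g})$. The positive mass theorem, applicable because $(M,g,K)$ is admissible, ensures the ADM energy-momentum vector is future causal and nonzero, so $\xi$ is asymptotically timelike. Hence $(\mathbf{M},\mathbf{g})$ is stationary, and $(M,g,K)$ sits inside it as an initial data set, as the conjecture requires.

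The principal obstacle will lie in Step 2: establishing the Fredholm theory for $D\Phi^*$ under the Bartnik boundary conditions. These conditions prescribe a geometrically natural but mixed set of boundary quantities -- Dirichlet-type for $g_{\partial M}$ and $\omega_{\partial M}$ and Neumann-type for $H_{\partial M}$ and $\tr_{\partial M}K$ -- and producing Lagrange multipliers $(N,X)$ with the correct global regularity and asymptotic profile requires a careful analysis of the adjoint boundary problem. In particular, one must verify that there are no spurious boundary contributions obstructing the passage from the variational identity to the KID equations on all of $M$, and that $(N,X)$ lies in a function space compatible with asymptotic flatness so that its leading behavior at infinity can legitimately be compared with the ADM energy-momentum vector.
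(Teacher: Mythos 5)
Your outline reproduces, in broad strokes, the strategy the paper itself uses (constraint manifold with fixed Bartnik data, Lagrange multiplier / Regge--Teitelboim Hamiltonian argument producing a generalised Killing field asymptotic to the ADM energy-momentum vector, then Moncrief's theorem). But the paper only obtains a \emph{partial} resolution of the conjecture (Corollary 1.4), and the places where your sketch claims more than that are exactly where the genuine gaps lie. The most serious one is in Step 1: the assertion that a minimizer of the Bartnik mass must lie on the \emph{vacuum} constraint manifold $\cC_B(0,0)$, ``otherwise one could reduce the matter content infinitesimally \dots and decrease $m_{\rm ADM}$,'' is not routine --- it is an open problem, stated explicitly in the paper. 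The dominant energy condition is an inequality constraint $u\geq|Z|$, and producing a deformation that strictly decreases the mass while solving the constraints, preserving the Bartnik boundary data, the decay conditions, and admissibility is a delicate matter; the strongest known result (Huang--Lee) only shows a minimizer is vacuum outside a compact set. Without vacuum data, Moncrief's theorem in Step 3 does not apply at all, so your argument cannot produce a stationary \emph{vacuum} spacetime. Relatedly, to pass from ``minimizer'' to ``critical point of $m_{\rm ADM}$ on $\cC_B(u,Z)$'' one needs the admissibility conditions --- in particular the no-horizon condition --- to be open, which the paper must assume as a hypothesis rather than prove.

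A second gap is in Step 3: from the positive mass theorem you conclude the ADM energy-momentum vector is ``future causal and nonzero, so $\xi$ is asymptotically timelike.'' Causal and nonzero does not imply timelike (it could be null), and both the Lagrange-multiplier theorem (Theorem 3.2 of the paper) and the identification of the Killing field's asymptotic limit require $\bP$ to be \emph{timelike}; the paper accordingly assumes positive ADM total mass as a hypothesis in its Corollary 1.4 rather than deriving it. Finally, two smaller points you gloss over but which the paper has to handle: the Lagrange multiplier is a priori only an element of the dual space $\cT^*$, and upgrading it to a $C^{m,\alpha}$ spacetime vector field with the stated decay (including regularity up to the boundary and ruling out rotational/boost-type asymptotics) is a substantial part of the proof; and applying the Cauchy development requires more regularity of the data ($m\geq 5$ or smoothness) than the minimal setting of the manifold theorem. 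As written, then, your proposal proves at best the same conditional statement as the paper's Corollary 1.4, while presenting the missing hypotheses (vacuum, open no-horizon condition, positive mass) as if they were automatic.
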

Here a {\it stationary vacuum spacetime} is a spacetime equipped with a Lorentzian metric which is Ricci flat and admits a Killing vector field that is asymptotically time-like. 
We note that in the original conjecture in \cite{B1} stationary spacetimes refer to those admitting time-like Killing vectors.
However, this is a strong condition and hard to prove.
Based on the result in Corollary 6.2 of \cite{B3} and Theorem 1.3 below, we expect the conjecture holds for the more general definition of stationary spacetimes.
Besides, we note that a well-known model of stationary spacetime is the Kerr metric (cf.\cite{W1}). It admits a Killing vector field that is only time-like outside a compact subset. 

The Conjecture 1.1 was first studied in the time-symmetric case 
where $K_0\equiv0$ so that the Bartnik boundary condition \eqref{Bcon} is reduced to
 \begin{equation}\label{rBcon}
\big(g_{\partial M}, H_{\partial M}\big)=\big((g_0)_{\partial\Omega}, H_{\partial\Omega}\big).\end{equation}
Corvino (cf.\cite{C}) proved that if $(M,g)$ is a minimal ADM energy extension which extends $(\Omega,g_0)$ smoothly, then it must be {\it static} in the sense that $g$ admits a nontrivial static potential on $M\setminus\p M$.
In addition, Miao (cf.\cite{Mi}) proved when $\p M$ has positive Gauss curvature, {any} minimal mass extension for the Bartnik quasi local mass, which is defined with non-increasing mean curvature boundary condition, must satisfy condition \eqref{rBcon} as well as being static.
For the general case where the spacetime is not necessarily time-symmetric, Corvino (cf.\cite{C1}) studied it using a modified constraint map and conformal argument.
With further application of the modified constraint map,  Huang-Lee (cf.\cite{HL}) proved that {any} mass minimizer can be embedded into a null dust spacetime which admits a global Killing vector field.

Besides the approaches mentioned above, Bartnik (cf.\cite{B3}) constructed a regularization $\mathcal H$ of the Regge-Teitelboim Hamiltonian and analyzed the functional $\mathcal H$ following an approach initiated by Brill-Deser-Fadeev (cf.\cite{BDF}). By that he proved on a complete asymptotically flat manifold constrained critical points of the ADM total mass must be stationary. Bartnik then suggested that a variational proof of the conjecture, based on extending his work to manifolds with nonempty boundary, would be more natural. By implementing the program suggested by Bartnik, Anderson-Jauregui (cf.\cite{AJ}) proved the conjecture in the time-symmetric case; moreover, they showed that the static potential function of the mass-mininizing metric must be positive and asymptotically decays to 1 at infinity, which has not been addressed by previous work. In this paper, we will generalize the method in \cite{B3,AJ} to study the mass-minimizing extensions for compact initial data sets in general and extend the result on critical points of the ADM total mass in \cite{B3} to asymptotically flat manifold with nonempty interior boundary where the Bartnik boundary data is fixed. This will further prove part of Conjecture 1.1.

Recall that the Einstein equation for a spacetime $(V^{(4)},g^{(4)})$ is given by
\begin{equation}\label{vac}
Ric_{g^{(4)}}-\frac{1}{2}R_{g^{(4)}}=8\pi T,\end{equation}
where $g^{(4)}$ is a metric with signature $(-,+,+,+)$; $Ric_{g^{(4)}}$ and $R_{g^{(4)}}$ are the Ricci curvature and scalar curvature of $g^{(4)}$; and $T$ is the stress-energy tensor of matter. If an initial data set $(M,g,K)$ is embedded in such a spacetime, it must satisfy the \textit{constraint equations}
\begin{equation}\label{cons}
\begin{cases}
R-|K|^2+(\tr K)^2=u,\\
{\rm div} K-d\tr K=Z,
\end{cases}
\end{equation}
where the norm $|\cdot|^2$, trace $\tr$ and divergence ${\rm div}$ operators are all with respect to the metric $g$. The first equation above is called the {\it Hamiltonian constraint} and the second is called the {\it momentum constraint}. The constraint equations are obtained by decomposing the Einstein equation \eqref{vac} according to the Gauss-Codazzi-Mainardi hypersurface equations on $M\subset (V^{(4)},g^{(4)})$.

Consider the {\it constraint space} $\mathcal C(u,Z)$ of all asymptotically flat initial data sets $(M,g,K)$ which satisfy the constraint equations \eqref{cons} for some fixed $(u,Z)$.
For complete asymptotically flat manifolds $M$, Bartnik proved (cf.\cite{B3}) the constraint space $\mathcal C(u,Z)$ has Hilbert manifold structure.
In a recent work \cite{Mc1} McCormick generalized this result to asymptotically flat manifolds with interior boundary where there is no boundary condition.
In view of the Bartnik quasi local mass, it is of great interest to examine the space $\cC(u,Z)$ subjected to certain boundary conditions.
However, a crucial ingredient in Bartnik's proof -- surjectivity of the constraint map -- becomes complicated and subtle when we impose geometric boundary conditions. In fact, McCormick (cf.\cite{Mc}) worked with the boundary condition which requires the first derivatives of the metric to be fixed on $\partial M$ and pointed out that the manifold structure theorem is almost certainly false in this case. In \cite{AJ} Anderson-Jauregui proved the manifold structure for the constraint space in the time-symmetric case, where the ellipticity of the boundary data \eqref{rBcon} for static spacetimes (cf.\cite{AK}) plays an important role.  

Inspired by the work \cite{AJ}, we apply ellipticity of the Bartnik boundary data to prove that the constraint space $\mathcal C(u,Z)$ admits a Banach manifold structure when the Bartnik boundary data \eqref{Bcon} is fixed. 
In this paper, we work with manifolds $M$ which are diffeomorphic to the exterior region $\mathbb R^3\setminus B^3$, where $B^3$ denotes the open unit 3-ball, and use weighted H\"older ($C^{m,\alpha}_{\delta}$) norm to control the asymptotic behavior of tensor fields on $M$.
Roughly speaking, a tensor field has bounded $C^{m,\alpha}_{\delta}$-norm if it is $C^{m,\alpha}$ smooth and decays to zero at the rate $r^{-\d}$ where $r$ is the radius function on $\mathbb R^3\setminus B$. We refer to Definition 2.1 for the precise definition.
Throughout the paper, we consider asymptotically flat initial data $(g,K)$ on $M$ where $g$ decays to the flat metric in the $C^{m,\alpha}_\d$-norm and $K$ has bounded $C^{m-1,\alpha}_{\d+1}$-norm for $m\geq 2,0<\alpha<1,1/2<\d\leq 1$. 
One of the main results of this paper is 
\begin{theorem}
Given $(u,Z)$ with bounded $C^{m-2,\alpha}_{\d+2}$-norm, the constraint space $\mathcal C_B(u,Z)$, which consists of asymptotically flat initial data $(g,K)$ on $M$ satisfying \eqref{cons} and \eqref{Bcon}, is an infinite-dimensional smooth Banach manifold. 
\end{theorem}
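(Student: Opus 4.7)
The plan is to realize $\cC_B(u,Z)$ as the preimage of a regular value under a smooth map between Banach manifolds and then apply the implicit function theorem. Let $\cP$ denote the open set of asymptotically flat pairs $(g,K)$ with $g-\d\in C^{m,\alpha}_\d(M;S^2T^*M)$ and $K\in C^{m-1,\alpha}_{\d+1}(M;S^2T^*M)$, and set
\[
\cE=C^{m-2,\alpha}_{\d+2}(M)\times C^{m-2,\alpha}_{\d+2}(M;T^*M)\times\cB(\p M),
\]
where $\cB(\p M)$ is the appropriate product of H\"older spaces on $\p M$ for the four Bartnik components. Define
\[
\Psi(g,K)=\bigl(R_g-|K|^2+(\tr K)^2,\;{\rm div}_g K-d\tr K,\;g_{\p M},\;H_{\p M},\;\tr_{\p M}K,\;\o_{\p M}\bigr).
\]
Each component of $\Psi$ is polynomial in $g,K$ and their first two derivatives, so smoothness of $\Psi:\cP\to\cE$ follows from the multiplication properties of the weighted H\"older classes under $\d>1/2$. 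Setting $y_0=(u,Z,(g_0)_{\p\O},H_{\p\O},\tr_{\p\O}K_0,\o_{\p\O})$, it then suffices to prove that $L:=D\Psi_{(g,K)}$ is surjective with complemented kernel at every $(g,K)\in\cC_B(u,Z)=\Psi^{-1}(y_0)$.

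The main work is the surjectivity of $L=(D\Phi,D\cB)$, and I would handle this by duality. Integration by parts writes the $L^2$-pairing of $L(h,q)$ against a test pair $(N,X)$ as a bulk term involving the formal adjoint $D\Phi^*(N,X)$ paired with $(h,q)$, plus a boundary term in which the four Bartnik components of $(h,q)$ pair against traces of $(N,X)$ and its normal derivative on $\p M$. The key analytic input is that the combined adjoint operator $(D\Phi^*,D\cB^*)$ forms an elliptic boundary value problem, extending to the non-time-symmetric setting the static ellipticity established by Anderson--Khuri \cite{AK}; since the momentum variables enter $D\Phi^*$ only at lower order relative to the lapse $N$, the principal-symbol computation on both the interior and the boundary reduces to the static one. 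Weighted Schauder estimates for this elliptic BVP then yield closed range and finite-dimensional cokernel for $L$, in parallel with the bulk-only treatment of \cite{B3,Mc1}.

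The hard part will be showing that the cokernel of $L$ is trivial. Under $L^2$ duality, a cokernel element corresponds to a decaying pair $(N,X)$ lying in the kernel of the adjoint BVP: it satisfies $D\Phi^*(N,X)=0$ on $M$ together with adjoint Bartnik boundary conditions on $\p M$. The interior equation identifies $(N,X)$ as Killing Initial Data for $(M,g,K)$; on the AF end, decaying KID are trivial by standard asymptotic analysis, so $(N,X)$ vanishes outside a compact set. The adjoint boundary conditions are dictated by the Hamiltonian pairing that underlies the choice \eqref{Bcon} of boundary data, and this is precisely what ensures enough vanishing boundary jets to propagate the vanishing by unique continuation for the second-order elliptic KID system, forcing $(N,X)\equiv0$ on all of $M$. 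With trivial cokernel and closed range, $L$ is surjective; an elliptic parametrix produces a bounded right inverse, so $\ker L$ is automatically complemented in $T_{(g,K)}\cP$. The implicit function theorem then yields that $\cC_B(u,Z)$ is a smooth Banach submanifold of $\cP$, and infinite-dimensionality follows from the existence of compactly supported deformations in $\ker L$ along the lines of Corvino--Schoen gluing.
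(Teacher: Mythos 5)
Your overall architecture (implicit function theorem, ellipticity tied to the Bartnik boundary conditions, KID rigidity to kill the cokernel) matches the paper's in spirit, but three of your key steps have genuine gaps. First, the claim that the relevant ellipticity ``reduces to the static one'' of \cite{AK} because ``the momentum variables enter $D\Phi^*$ only at lower order relative to the lapse $N$'' is not correct. The adjoint system is of mixed order ($2X^0K+L_Xg=0$ is first order in $X$ and zeroth order in $X^0$, while the second equation is second order in $X^0$), it is overdetermined, and in the paper the ellipticity is not obtained for the adjoint at all: one must gauge-fix and augment the linearized constraints by a shift vector $Y$, with momentum deformations of the form $\delta^*Y+(\delta Y)g$, so that the shift enters the interior operator at leading order ($\delta\delta^*Y+d\delta Y$) and the boundary operator at leading order as well. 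Verifying the ADN complementing condition for this genuinely stationary (non-static) system is the content of \S2.3 and relies on \cite{Az2}; it does not follow from the static computation. Relatedly, your mechanism for closed range --- ``weighted Schauder estimates for this elliptic BVP then yield closed range'' --- is dual-space reasoning that is delicate in H\"older spaces (whose duals are not H\"older spaces); the paper instead restricts to a gauge-fixed subspace $\mathcal V$ on which the constraint operator is a component of a Fredholm elliptic BVP and concludes finite codimension of the image by projection, avoiding any duality estimate.

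Second, in the cokernel step you assert that a cokernel element ``corresponds to a decaying pair $(N,X)$.'' A priori it is only a bounded functional on $C^{m-2,\alpha}_{\delta+2}\times(\wedge_1)^{m-2,\alpha}_{\delta+2}$, and such functionals need not decay; the paper must first prove interior $C^{m,\alpha}$ regularity by a Fourier/mollification argument, then invoke the Beig--Chru\'sciel classification \cite{BC}, which only says the KID is asymptotic to a boost/rotation or a translation, and finally rule out the nonzero asymptotic constants by pairing against explicit test functions (appendix \S4.5--4.6) using boundedness of the functional. Once the constants vanish, \cite{BC} gives vanishing on all of ${\rm int}\,M$ directly --- no unique continuation from adjoint Bartnik boundary conditions is needed, and your ``vanishes outside a compact set, then propagate via boundary conditions'' step is both unnecessary and unsubstantiated; what is actually needed at the boundary is the decomposition of an arbitrary $(u,Z)$ into a part vanishing on $\partial M$ plus a part in ${\rm Im}\,\cL$ (appendix \S4.2). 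Third, your treatment of the complemented kernel is circular: $D\Phi$ itself is underdetermined, not elliptic, so ``an elliptic parametrix produces a bounded right inverse'' is exactly the assertion that needs proof --- a surjective bounded operator between Banach spaces has a bounded linear right inverse if and only if its kernel splits. The paper devotes \S2.2 to this, using White's method \cite{W} together with explicit extension operators $E_1,E_2$ and the operator $Q(Y)=\delta^*Y+(\delta Y)g$ to build the splitting $T\mathcal B=S_1\oplus S_2$ and then $S_1=S\oplus(\cL^{-1}(0)\cap S_1)$; some substitute for this construction is required in your approach.
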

This theorem is proved by applying the implicit function theorem for Banach spaces to the constraint map. In \S2, we construct a constraint map $\Phi$ based on \eqref{cons} and the boundary conditions \eqref{Bcon} so that the constraint space $\mathcal C_B(u,Z)$ is equal to the level set of $\Phi$. 
We will show this constraint map is a submersion, i.e. its linearization is surjective and has splitting kernel.
In \S2.1 we prove the linearization $D\Phi$ is surjective by showing it has closed range and trivial cokernel, where the closed-range property is essentially due to the ellipticity of the Bartnik boundary data for stationary vacuum spacetimes.
A detailed discussion of the ellipticity is given in \S2.3.
In addition, we prove that the linearized constraint map has splitting kernel in \S2.2 using the idea developed in \cite{W}. 

We refer to $\cC_B(u,Z)$ as the {\it constraint manifold} since it admits Banach manifold structure.
On this constraint manifold, we consider the modified Regge-Teitelboim Hamiltonian $\cH$ constructed in \cite{B3}. In \S3 we analyze the variational formula of $\cH$ and show that the boundary terms in the formula vanish for infinitesimal deformations preserving the Bartnik boundary data. So it follows that the variational problem for the Hamiltonian $\cH$ on the constraint manifold is well-defined. Then we study critical points of the ADM total mass on the constraint manifold, following the approach suggested by Bartnik in \cite{B3}. A rough version of the main theorem is as follows, we refer to Theorem 3.2 for a precise statement.
\begin{theorem}
For $(u,Z)$ with bounded $C^{k,\a}_{q}$-norm $(k\geq 0,q\geq 4)$, critical points of the ADM total mass on the constraint manifold $\mathcal C_B(u,Z)$ which have positive ADM total mass are exactly the initial data sets admitting generalised Killing fields that are asymptotically time-like. 
\end{theorem}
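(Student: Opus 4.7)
The plan is to proceed via Lagrange multipliers on the Banach manifold $\cC_B(u,Z)$ produced by Theorem 1.2, using the modified Regge-Teitelboim Hamiltonian $\cH$ of \cite{B3}. Fix once and for all test functions $(N,X)$ on $M$ with $N\to 1$ and $X\to 0$ at infinity. The functional $\cH_{(N,X)}(g,K)$ is designed so that on $\cC_B(u,Z)$ it differs from $m_{ADM}$ only by a term depending on the fixed data $(u,Z)$, so critical points of $\cH_{(N,X)}$ and of $m_{ADM}$ on $\cC_B(u,Z)$ coincide. The central analytic input is the variational identity analysed in \S3,
\[
D\cH_{(g,K)}(h,k) \;=\; \bigl\langle D\Phi_{(g,K)}(h,k),(\bar N,\bar X)\bigr\rangle \;+\; \text{boundary terms},
\]
in which the $\p M$-boundary contributions vanish whenever $(h,k)$ infinitesimally preserves the Bartnik data \eqref{Bcon}.

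Assume now that $(g,K)\in\cC_B(u,Z)$ is a critical point of $m_{ADM}$ with positive ADM mass. The submersion property of $\Phi$ from Theorem 1.2 -- surjectivity of $D\Phi$ together with splitting kernel -- allows me to invoke the Banach-space Lagrange multiplier theorem to extract a pair $(\bar N,\bar X)$ asymptotic to $(N,X)$ at infinity such that $D\Phi^*_{(g,K)}(\bar N,\bar X)=0$ in the interior, together with adjoint boundary conditions along $\p M$ produced by integrating the boundary terms of $\Phi$ by parts. This overdetermined elliptic system is precisely the defining equation of a generalised Killing field of $(g,K)$. Conversely, given such a $(\bar N,\bar X)$, pairing with $D\Phi_{(g,K)}(h,k)=0$ in the identity above immediately yields $Dm_{ADM}(h,k)=0$ for every tangent direction $(h,k)\in T_{(g,K)}\cC_B(u,Z)$, so $(g,K)$ is critical. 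The asymptotic time-likeness of $(\bar N,\bar X)$ is then obtained by comparing its leading behaviour with the test pair $(N,X)$ and the ADM energy-momentum vector $(E,P)$: the positivity hypothesis on $m_{ADM}$ together with the positive mass theorem forces $(E,P)$ to be time-like, and the Lagrange multiplier inherits this behaviour up to a positive multiplicative constant.

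The step I expect to be the main obstacle is producing the Lagrange multiplier $(\bar N,\bar X)$ with the right regularity and prescribed asymptotics. The Banach-space Lagrange multiplier theorem initially supplies $(\bar N,\bar X)$ only as a continuous linear functional on the target of $D\Phi$; upgrading it to a classical solution of the overdetermined adjoint system $D\Phi^*(\bar N,\bar X)=0$ with the correct boundary conditions on $\p M$ will require the ellipticity of the Bartnik boundary data established in \S2.3, combined with elliptic regularity and weighted asymptotic analysis matching the prescribed behaviour at infinity. A secondary difficulty is verifying that $(\bar N,\bar X)$ is nontrivial with lapse bounded away from zero at infinity, so that the resulting generalised Killing field is genuinely (not degenerately) asymptotically time-like; this is exactly where the positive ADM mass hypothesis enters, following the strategy Bartnik uses in \cite{B3} for the boundaryless case.
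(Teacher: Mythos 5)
Your overall strategy is the same as the paper's (Lagrange multiplier for the Regge--Teitelboim Hamiltonian $\cH$ on the Banach manifold $\cC_B(u,Z)$, vanishing of the $\p M$ boundary terms for deformations preserving the Bartnik data, then elliptic/weighted asymptotic analysis to upgrade the multiplier to a genuine solution of $D\Phi^*=0$), but there is a genuine gap at the very first step: you fix the test pair $(N,X)$ with $N\to 1$, $X\to 0$ at infinity. With that choice, on the constraint manifold $\cH$ agrees (up to the fixed constant $\int_M \xi^\mu\w\Phi_\mu$) with $16\pi\,\xi_\infty^\mu\bP_\mu=16\pi\,\bP_0$, i.e. with the ADM \emph{energy}, not with the ADM \emph{mass} $m_{\rm ADM}=\sqrt{-\bP^\mu\bP_\mu}$. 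Since $Dm_{\rm ADM}=-m_{\rm ADM}^{-1}\,\mathring\bP^\mu D\bP_\mu$ involves all components of $D\bP$ weighted by the energy-momentum vector $\mathring\bP$ of the given data, a critical point of the mass need not be a critical point of your $\cH_{(N,X)}$, so the Lagrange multiplier you propose to extract need not exist; and even when it does, it would be asymptotic to $(1,0,0,0)$, not to $\mathring\bP$, so the converse direction would only give criticality of the energy. The paper avoids this by choosing the asymptotic vector of the lapse--shift to be $(\xi_0)_\infty=-\mathring\bP/m_{\rm ADM}(g_0,\pi_0)$, so that $Dm_{\rm ADM}=(\xi_0)_\infty^\mu D\bP_\mu$ at the critical point and the multiplier automatically has limit proportional to $\mathring\bP$; your "comparison of leading behaviour up to a positive constant" at the end cannot recover this, because proportionality to $(1,0,0,0)$ is simply a different asymptotic condition unless $\mathring\bP$ has vanishing momentum part. (Relatedly, the appeal to the positive mass theorem is not needed: time-likeness of $\mathring\bP$ is part of the hypothesis once $m_{\rm ADM}>0$ is interpreted via \eqref{ADM}.)

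Two smaller points. First, the multiplier is not subject to "adjoint boundary conditions along $\p M$": the content of Lemma 3.1 is that the boundary integrand in the integration by parts vanishes identically for all $(h,\sigma)\in T\w\cB$, so the multiplier is characterized only by the interior equation $D\w\Phi^*_{(g_0,\pi_0)}(\cdot)=D\w\Phi^*_{(g_0,\pi_0)}(\xi_0)$, and smoothness up to $\p M$ is proved afterwards (appendix \S4.4), not imposed. Second, your regularity/asymptotics program for the multiplier (interior elliptic regularity as in \S2.1, then the Beig--Chru\'sciel classification combined with boundedness of the functional on $\cT$ to pin down the decay) is indeed what the paper carries out, so that part of the plan is sound once the asymptotic vector is corrected.
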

Here we adopt the terminology {\it generalised Killing vector fields} from the work of Bartnik \cite{B3} -- it refers to nontrivial kernel elements of the adjoint $D\Phi^*$ of the linearized constraint map (cf.\S3 for the precise definition).
Such kernel elements are also called {\it Killing Initial Data} (KIDs) in the literature (cf.\cite{BCS}), which is motived by the following well-known result from \cite{Mo} (cf. also \cite{FM}):
\smallskip\\
{\bf Theorem}
 \footnote{Moncrief worked with vacuum spacetimes with compact Cauchy hypersurfaces in order to discuss the linearization stability of the Einstein equations. But this particular result also holds in noncompact case.}
 (Moncrief) {\it Suppose $(M,g,K)$ is embedded as a Cauchy surface in a smooth globally hyperbolic spacetime $(V^{(4)},g^{(4)})$ which satisfies the vacuum Einstein equation \eqref{vac} with $T=0$. Then a generalised Killing vector field $(X^0,X^i)$ of $(M,g,K)$ gives rise to a standard Killing vector field $X^{(4)}$ in $(V^{(4)},g^{(4)})$ such that the perpendicular and parallel components of $X^{(4)}$ are $X^0$ and $X^i$ on $M$.}\smallskip

Now combining this theorem and Theorem 1.3, we can prove Conjecture 1.1 partially.
In the following we assume that the no-horizon condition in the definition of admissible extensions for Bartnik quasi local mass \eqref{Bmass} is an open condition (cf.\cite{HL} for a discussion of open no-horizon conditions).
Suppose an initial data set $(M,g,K)$ is a mass-minimizing extension for $(\Omega,g_0,K_0)$, where $\p\Omega=S^2$ and $M\cong\mathbb R^3\setminus B$.
Then the initial data $(g,K)$ must be a critical point of the ADM total mass on the constraint manifold $\cC_B(u,Z)$ that contains it.
Assume in addition the infimum \eqref{Bmass} is positive, then by Theorem 1.3 $(M,g,K)$ must admit a generalised Killing vector field which is asymptotically time-like.
If in addition this initial data set satisfies the vacuum constraint equations \eqref{cons} with $u=Z=0$, then one can construct a vacuum spacetime $(V^{(4)},g^{(4)})$ where $(M,g,K)$ is embedded as a Cauchy surface. 
It then follows from the Theorem by Moncrief that the vacuum spacetime $(V^{(4)},g^{(4)})$ is stationary, i.e. it admits a Killing vector field that is asymptotically time-like. 
This leads to the following corollary:
\begin{corollary}
If $(M,g,K)$ is a smooth asymptotically flat initial data set realizing the infimum in \eqref{Bmass} with positive ADM total mass and satisfying the vacuum constraint equations, then it must arise from a vacuum stationary spacetime. 
\end{corollary}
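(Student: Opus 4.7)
The proof is essentially an assembly of four pieces: criticality on the constraint manifold of Theorem 1.2, the conclusion of Theorem 1.3, the vacuum Cauchy problem, and the Moncrief theorem quoted in the introduction. My plan would proceed in that order.

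First I would show that $(g,K)$ is an unconstrained critical point of the ADM total mass on the constraint manifold $\cC_B(0,0)$ provided by Theorem 1.2 with $u=Z=0$. The openness assumption on the no-horizon condition, which the author invokes just before the corollary, enters precisely here: any small deformation of $(g,K)$ inside $\cC_B(0,0)$ still has Bartnik boundary data matching $(\Omega,g_0,K_0)$ by construction, still satisfies the dominant energy condition trivially (both sides vanish in vacuum), and still satisfies the no-horizon condition by openness. So every nearby element of $\cC_B(0,0)$ is again an admissible extension of $(\Omega,g_0,K_0)$; since $(g,K)$ realizes the infimum $m_B(\Omega,g_0,K_0)$, it must be a critical point of $m_{\mathrm{ADM}}$ along every smooth path in $\cC_B(0,0)$.

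Next I would apply Theorem 1.3 directly. Because $m_{\mathrm{ADM}}(g,K)>0$ by hypothesis, Theorem 1.3 produces a generalised Killing vector field $(X^0,X^i)\in\ker D\Phi^*$ on $(M,g,K)$ whose asymptotic behaviour is proportional to the ADM energy-momentum vector. Positivity of the ADM mass forces the energy-momentum vector to be future time-like, so $(X^0,X^i)$ is asymptotically time-like. To pass from the initial data set to a spacetime, I would invoke the Choquet-Bruhat maximal globally hyperbolic development theorem: because $(g,K)$ solves the vacuum constraint equations, there exists a smooth globally hyperbolic vacuum spacetime $(V^{(4)},g^{(4)})$ in which $(M,g,K)$ embeds as a Cauchy hypersurface with induced metric $g$ and second fundamental form $K$. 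The hypotheses of the Moncrief theorem stated in the introduction are then satisfied, and it upgrades the KID $(X^0,X^i)$ to a genuine Killing vector field $X^{(4)}$ on $(V^{(4)},g^{(4)})$ whose lapse along $M$ is $X^0$ and whose shift along $M$ is $X^i$. The asymptotic time-likeness of $(X^0,X^i)$ then transfers to $X^{(4)}$ near spatial infinity, so $(V^{(4)},g^{(4)})$ is stationary in the generalised sense used throughout the paper, and $(M,g,K)$ arises as an initial data set in this stationary vacuum spacetime.

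The only genuinely delicate point in this plan is the first step: one must ensure that criticality of $(g,K)$ among admissible extensions, as provided by the minimization, actually upgrades to criticality on the full tangent space of $\cC_B(0,0)$, rather than just along some proper subcone of admissible variations. That is exactly the role played by the openness of the no-horizon condition together with the Banach manifold structure of Theorem 1.2; once those are granted, the remaining steps are direct appeals to Theorem 1.3, the vacuum Cauchy problem, and Moncrief's theorem, and no other real obstruction appears.
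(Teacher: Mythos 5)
Your proposal is correct and follows essentially the same route as the paper: criticality of the minimizer on the vacuum constraint manifold (via the open no-horizon condition and Theorem 1.2), Theorem 1.3 to produce an asymptotically time-like generalised Killing field, the Choquet-Bruhat vacuum Cauchy development, and Moncrief's theorem to upgrade the KID to a spacetime Killing field. The only cosmetic difference is that the paper treats positivity of the mass as already presupposing a time-like ADM energy-momentum vector (since $m_{\rm ADM}=\sqrt{-\bP^\mu\bP_\mu}$ is defined only in that case), whereas you phrase this as a consequence; this does not affect the argument.
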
  
\begin{remark}
{\rm The ambient vacuum spacetime $(V^{(4)},g^{(4)})$ can be constructed as a solution to the Cauchy problem of the vacuum Einstein equations. For this Cauchy problem to be well-posed, the initial data $(g,K)$ must have enough regularity. In the original work of Choquet-Bruhat (\cite{CB}), the initial data must be $C^5\times C^4$-smooth to guarantee existence of solutions locally in time. We refer to \cite{HR} for a detailed discussion and recent development on the regularity issue. 
In the corollary above we assume the initial data set is smooth for simplicity. However, with respect to the weighted H\"older norm used in this paper, the result should work well for initial data sets $(M,g,K)$ where $(g,K)\in Met^{m,\alpha}_{\d}\times S^{m-1,\alpha}_{\d+1}$ for $m\geq 5$. 
}
\end{remark}
It remains an open and interesting problem whether a minimizer of the Bartnik mass must belong to the vacuum constraint manifold $\cC_B(0,0)$. We note that in a recent work by Huang-Lee \cite{HL} the constraints of a minimizer is well-studied and in particular they are shown to be vacuum outside a compact set of $M$.

\medskip
\textbf{Acknowledgements}
I would like to express great thanks to my Ph.D advisor Prof. Michael Anderson for suggesting this problem, and thanks to Prof. Michael Anderson and Prof. Lan-Hsuan Huang for valuable discussions and comments.

\section{The Constraint Manifold}
Let $M$ be a smooth manifold with nonempty boundary diffeomorphic to $\mathbb R^3\setminus B^3$, where $B^3$ denotes the open unit 3-ball. So its boundary $\p M$ is diffeomorphic to the unit sphere $S^2$. Via the diffeomorphism $M\cong\mathbb R^3\setminus B$, $M$ can be equipped with a global coordinate chart $\{x^i\},(i=1,2,3)$, a radius function $r\in [1,\infty)$ and a flat metric $\mathring g$ which is the pull back of the flat metric on $\mathbb R^3\setminus B^3$. Using this chart, we can define the weighted H\"older spaces of tensor fields on $M$ as follows.
\begin{definition}
Let $m$ be a nonnegative integer, $\alpha\in(0,1)$ and $\d\in\mathbb R$. The $C^m_{\delta}$-norm of a $C^m$ function $v$ on $M$ is given by
$$||v||_{C^m_{\delta}}=\textstyle\sum_{k=0}^m{\rm sup}~ r^{k+\delta}|\mathring{\nabla}^kv|$$
where $\mathring{\nabla}$ is the connection with respect to $\mathring{g}$.
The $C^{m,\alpha}_{\delta}$-norm of a $C^{m,\alpha}$ function $v$ on $M$ is given by 
$$||v||_{C^m_{\delta}}+{\rm sup}_{x\neq y}\{{\rm min}( r(x), r(y))^{m+\alpha+\delta}
\frac{|\mathring{\nabla}^mv(x)-\mathring{\nabla}^mv(y)|}{|x-y|^{\alpha}}\}.
$$
The space $C^m_{\delta}(M)$ (or $C^{m,\alpha}_{\delta}(M)$) is the space of all functions with bounded $C^m_{\delta}$-norm ( or $C^{m,\alpha}_{\delta}$-norm). Various spaces of tensor fields on $M$ with respect to the weighted H\"older norm are defined as
\begin{equation*}
    \begin{split}
&Met^{m,\alpha}_{\delta}(M)=\{\text{Riemannian metrics}~g~\text{on}~M: (g_{ij}-\mathring g_{ij})\in C^{m,\alpha}_{\delta}(M)\},\\
&S^{m,\alpha}_{\delta}(M)=\{\text{symmetric (0,2)-tensors }~K~\text{on}~M: K_{ij}\in C^{m,\alpha}_{\delta}(M)\},\\
&T^{m,\alpha}_{\delta}(M)=\{\text{vector fields }~Y~\text{on}~M: Y^i\in C^{m,\alpha}_{\delta}(M)\},\\
&(T_p^q)^{m,\alpha}_{\delta}(M)=\{(q,p)-\text{tensors}~\tau~\text{on}~M: \tau_{i_1i_2..i_p}^{j_1j_2...j_q}\in C^{m,\alpha}_{\delta}(M)\},\\
&(\wedge_p)^{m,\alpha}_{\delta}(M)=\{p-\text{forms}~\sigma~\text{on}~M: \sigma_{i_1i_2..i_p}\in C^{m,\alpha}_{\delta}(M)\}.
    \end{split}
\end{equation*}
\end{definition}
On $M$, an asymptotically flat initial data set consists of a Rimannian metric $g\in Met^{m,\alpha}_{\delta}(M)$ and a symmetric 2-tensor $K\in S^{m-1,\alpha}_{\delta+1}(M)$. 
Throughout, we assume that $m\geq 2$ and $\frac{1}{2}<\delta< 1$.
Based on the Bartnik boundary condition (1.2), we set up a space  $\mathcal B$ of initial data on $M$ with fixed Bartnik boundary data: 
\begin{equation*}
\begin{split}
\mathcal B=\{&(g,K)\in [Met_{\delta}^{m,\alpha}\times S_{\delta+1}^{m-1,\alpha}](M):\\
& (g_{\partial M},~H_{\partial M},~\tr_{\partial M}K,~\omega_{\partial M})=~\big((g_0)_{\partial\Omega},H_{\partial\Omega}, \tr_{\partial\Omega}K_0,\omega_{\partial\Omega}\big)\text{ on }\partial M \},
\end{split}
\end{equation*}
where $(\Omega,g_0,K_0)$ is a fixed compact initial data set with boundary $\p\Omega\cong S^2$.
It is easy to show by implicit function theorem that for a fixed set of data $\big((g_0)_{\partial\Omega},H_{\partial\Omega}, \tr_{\partial\Omega}K_0,\omega_{\partial\Omega}\big)$, $\mathcal B$ is a smooth closed Banach submanifold of $[Met_{\delta}^{m,\alpha}\times S_{\delta+1}^{m-1,\alpha}](M)$. The tangent space at a point $(g,K)\in\cB$ consists of infinitesimal deformations which preserve the Bartnik boundary data, i.e. 
\begin{equation}\label{lBcon}
\begin{split}
T\mathcal B|_{(g,K)}=\{
(&h,p)\in [S_{\delta}^{m,\alpha}\times S_{\delta+1}^{m-1,\alpha}](M):\\
& h^T=0,~H'_h=0,~\tr^Tp=0,~p(\mathbf n)^T+K(\mathbf n'_h)^T=0 \text{ on }\partial M
\}.
\end{split}
\end{equation}
Throughout the paper the superscript $^T$ on a tensor field denotes its components tangential to the boundary manifold $\partial M$. In addition, we use $\tr^T$ to denote the trace of an induced tensor on the boundary manifold with respect to the induced metric $g^T$. The prime $'$ denotes the variation of a geometric tensor with respect to the infinitesimal deformation $h$ or $p$. More precisely, if $g(t)=g+th$ is a family of metrics on $M$ then $H'_h=\tfrac{d}{dt}|_{t=0}H_{g(t)}$ and ${\bf n}'_h=\tfrac{d}{dt}|_{t=0}{\bf n}_{g(t)}$. Recall that ${\bf n}_g$ denotes the unit normal to the boundary $\p M\subset (M,g)$ pointing inwards and $H_g$ is the mean curvature $H_g={\rm div}_g{\bf n}_g$
\footnote{We extend ${\bf n}$ naturally to a vector field defined in a collar neighborhood of $\p M$ such that $\nabla_{\bf n}{\bf n}=0$}.
In the following we omit the subscript $g$ for simplicity.
Basic calculation shows 
(cf. for example \cite{Az2})
\begin{equation}\label{lhn}
H'_h=\tfrac{1}{2}{\bf n}(\tr^Th)+\d^T(h({\bf n})^T)-\tfrac{1}{2}h({\bf n},{\bf n})H,\ \ {\bf n}'_h=-\tfrac{1}{2}h({\bf n},{\bf n}){\bf n}-h({\bf n})^T.
\end{equation}
Here $\d$ is the negative divergence operator, i.e. $\d \tau=-\tr \nabla\tau=-g^{ab}\nabla_a\tau_{bi_1i_2...i_{p-1}}$ for any $(0,p)$ tensor $\tau$; and $\d^T$ denotes such operator on $\p M$ with respect to the induced metric $g^T$.

Define the constraint map $\Phi$ on $\mathcal B$ as
\begin{equation}\label{consm}
\begin{split}
&\Phi:\mathcal B\to\mathcal T\\
\Phi(g,K)=&\big(\Phi_0(g,K),\Phi_i(g,K)\big),
\end{split}
\end{equation}
where
\begin{equation*}
\begin{split}
&\Phi_0(g,K)=\big(R-|K|^2+(\tr K)^2\big)\sqrt{g},\\
&\Phi_i(g,K)=-2\big(\delta K+d(\tr K)\big)\sqrt{g},
\end{split}
\end{equation*}
with $\sqrt{g}=\sqrt{\det g}/\sqrt{\det\mathring g}$. 
The target space of $\Phi$ is 
$\mathcal T=C^{m-2,\alpha}_{\delta+2}(M)\times (\wedge_1)^{m-2,\alpha}_{\delta+2}(M)$ since $\Phi_0(g,K)\in C^{m-2,\alpha}_{\delta+2}(M)$ is a scalar field and $\Phi_i(g,K)\in (\wedge_1)^{m-2,\alpha}_{\delta+2}(M)$ is a 1-form. By basic computation, the linearization of $\Phi$ at a point $(g,K)\in\cB$ with $\Phi(g,K)=(u,Z)$ is given by
\begin{equation}\label{lconsm}
\begin{split}
&D\Phi_{(g,K)}:T\mathcal B|_{(g,K)}\to\mathcal T\\
&D\Phi_{(g,K)}(h,p)=
\big((D\Phi_0)_{(g,K)}(h,p),(D\Phi_i)_{(g,K)}(h,p)\big),
\end{split}
\end{equation} 
where
\begin{equation}\label{lu}
\begin{split}
(D\Phi_0)_{(g,K)}(h,p)=&R'_h\sqrt{g}+\big(2K_{ik}K_{j}^kh^{ij}-2(\tr K)\langle K,h \rangle\big)\sqrt{g}\\
&-2(\langle K,p\rangle-(\tr K)(\tr p))\sqrt{g}+\tfrac{1}{2}(\tr h)u,
\end{split}
\end{equation} 
\begin{equation}\label{lz}
\begin{split}
(D\Phi_i)_{(g,K)}(h,p)&=-2\big(\delta p+d\tr p\big)\sqrt{g}-2\big(\delta'_hK-d\langle K,h \rangle\big)\sqrt{g}+\tfrac{1}{2}(\tr h)Z.
\end{split}
\end{equation} 
In the formulas above variation of the scalar curvature is $R'_h=\D(\tr h)+\d\d h-\<Ric_g,h\>$; and variation of the divergence operator is $(\d'_hK)_i=h^{jk}\nabla_j K_{ki}-K(\b h)_i+\tfrac{1}{2}K_{jk}\nabla_i h^{jk}$ where $\b$ denotes the Bianchi operator $\b h=\d h+\tfrac{1}{2}d\tr h$ (cf.\cite{Bess}). Here and throughout the paper the Laplacian $\D=-\tr{\rm Hess}$.

In this section we will prove for a fixed pair $(u,Z)\in\cT$ the level set $\Phi^{-1}(u,Z)$ is a Banach manifold based on the implicit function theorem. 
Before starting the proof, we note that there is an equivalent way to express the constraint map. Let $\pi$ be the conjugate momentum defined as
\begin{equation*}
\begin{split}
\pi=\big(K-(\tr_gK)g\big)^{\sharp}\sqrt{g}.
\end{split}
\end{equation*}
Here the superscript $^{\sharp}$ means to raise the indices of a (0,2)-tensor with respect to the metric $g$. Let $\mathcal{\w B}$ be the space of pairs $(g,\pi)$ parameterised by $(g,K)$ in $\mathcal B$:
\begin{equation*}
\begin{split}
\mathcal{\w B}=\{(g,\pi)\in [Met^{m,\a}_{\d}\times (T_0^2)^{m-1,\a}_{\d+1}](M): g=g, \pi=\big(K-(\tr_{g}K)g\big)^{\sharp}\sqrt{g}, \text{ for some }(g,K)\in\mathcal B\}.
\end{split}
\end{equation*}
It is easy to observe that the space $\mathcal B$ and $\mathcal{\w B}$ are equivalent. So $\mathcal {\w B}$ is also a Banach manifold. The tangent space at $(g,\pi)\in\w\cB$ is given by
\begin{equation}\label{tB}
\begin{split}
T\mathcal {\w B}|_{(g,\pi)}=\{
&(h,\sigma)\in [S_{\delta}^{m,\alpha}\times (T_0^2)_{\delta+1}^{m-1,\alpha}](M):~\sigma\text{ is a symmetric (2,0)-tensor, }\\
&h^T=0,~H'_h=0,~\sigma^{11}+\frac{1}{2}\pi^{11}h_{11}=0,~\sigma^{1A}+\pi^{11}h_{1A}=0~(A=2,3)
\text{ on } \partial M
\}.
\end{split}
\end{equation}
Here and throughout the paper, we use the index $1$ to denote normal direction to the boundary $\p M\subset (M,g)$ and indices $2,3$ to denote the tangential directions to the boundary $\partial M$. Upper case Roman indicies $A\in\{2,3\}$ and lower case Roman $i\in\{1,2,3\}$. In addition we use index $0$ to denote the time direction in the ambient spacetime which contains the initial data set $(M,g,K)$ and use Greek letters $\mu\in\{0,1,2,3\}$ when needed. 

The boundary conditions in \eqref{tB} are equivalent to those listed in \eqref{lBcon}; we refer to the appendix \S 4.1 for the detailed verification. The constraint map then can be equivalently defined as
\begin{equation}\label{tconsm}
\begin{split}
&\w\Phi:\mathcal {\w B}\to\mathcal T\\
\w\Phi(g,\pi)=&\big(\w\Phi_0(g,\pi),\w\Phi_i(g,\pi)\big),
\end{split}
\end{equation}
with
\begin{equation*}
\begin{split}
\w\Phi_0(g,\pi)=R(g)\sqrt{g}-\big(|\pi|^2-\tfrac{1}{2}(\tr\pi)^2\big)/\sqrt{g},\ \ \w\Phi_i(g,\pi)=-2\big(\delta(\pi/\sqrt{g})\big)^{\flat}\sqrt{g}.
\end{split}
\end{equation*}
Here the superscript $\flat$ means to lower the indices of a tensor field with respect to the metric $g$. We refer to \cite{B3} for the explicit formula of the linearization $D\w\Phi$.
Obviously, the maps $\Phi$ and $\w \Phi$ are related by the equivalence between $\mathcal B$ and $\mathcal{\w B}$, so their level sets are diffeomorphic. In the next section (\S3), we will switch between these two formulations as needed.

\medskip

Now we give the proof that the constraint map $\Phi$ is a submersion, i.e. at a point $(g,K)\in\Phi^{-1}(u,Z)$ the linearized map $D\Phi_{(g,K)}$ given in \eqref{lconsm} is surjective and its kernel splits in $T\mathcal B|_{(g,K)}$, so that we can apply the implicit function theorem on $\Phi$. In the following we will use $\cL$ to denote the linearized constraint map $D\Phi_{(g,K)}$ for simplicity.

\subsection{Surjectivity} We will prove $\cL$ is surjective by showing it has closed range and trivial cokernel.
To prove $\cL$ has closed range, we will construct a subspace $V$ of the tangent space $T\mathcal B|_{(g,K)}$ so that the image $\cL(V)$ has finite codimension in $\mathcal T$. 

For $(g,K)$ define a space $\mathcal W$ consisting of triples $(h,Y, v)$ of a symmetric 2-tensor $h$, a vector field $Y$ and a scalar field $v$, all of which are asymptotically zero on $M$ as follows
\begin{equation}\label{CW}
\begin{split}
\mathcal W=\{&(h,Y, v)\in [S^{m,\alpha}_{\delta}\times T^{m,\alpha}_{\delta}\times C^{m,\alpha}_{\delta}](M):\\
&~\delta h-3dv=0,~h^T=0,~H'_h=0,~ \tr^T\big(\delta^*Y+(\delta Y)g\big)=0,~\delta^*Y(\mathbf n)^T+K(\mathbf n'_h)^T=0
\text{ on }\partial M~
\}.
\end{split}
\end{equation}
Here and throughout the following, the divergence operator $\d$ and its adjoint $\d^*$ are both with respect to the metric $g$. When acting on vector fields, the adjoint of $\d$ is given by the Lie derivative: $\d^*Y=\tfrac{1}{2}L_Yg$.
Variation of the mean curvature $H'_h$ and unit normal ${\bf n}'_h$ are as in \eqref{lhn}, both taken at the base point $g$. All the boundary conditions above are constructed based on the linearized Bartnik boundary conditions listed in \eqref{lBcon}, except that the first boundary condition is regarded as a gauge condition. 

Let $\mathcal V$ be the space obtained from projecting $\mathcal W$ to the first two components, i.e.
\begin{equation}\label{CV}
\begin{split}
\mathcal V=\{(h,Y)\in [&S^{m,\alpha}_{\delta}\times T^{m,\alpha}_{\delta}](M):  (h,Y,v)\in\mathcal W \text{ for some } v\}.
\end{split}
\end{equation}
Then it is easy to verify that the space $V$ generated by $\mathcal V$:
\begin{equation*}
\begin{split}
V=\{(h,p)\in [&S^{m,\alpha}_{\delta}\times S^{m-1,\alpha}_{\delta+1}](M):~
h=h,~p=\delta^*Y+(\delta Y)g\text{ for some }(h,Y)\in\mathcal{V}
\},
\end{split}
\end{equation*}
is a subspace of $T\mathcal B|_{(g,K)}$.
Let $\Psi$ be the map on $\mathcal V$ induced by the linearized constraint map $\cL$ on $V$, i.e.
\begin{equation}\label{PS1}
\begin{split}
&\Psi: \mathcal{V}\to\mathcal T\\
\Psi(h,Y&)=\cL(h,\delta^*Y+(\delta Y)g).
\end{split}
\end{equation}
Based on the formula \eqref{lu}-\eqref{lz}, $\Psi$ is of the form
\begin{equation}\label{PS2}
\begin{split}
\Psi(h,Y)
=\big(~ (\D(\tr h)+\d\d h)\sqrt{g}+O_1(h,Y),~-2[\delta\delta^*Y+d\delta Y]\sqrt{g}+O_1(h,Y)~\big)
\end{split}
\end{equation}
where $O_1(h,Y)$ denote the terms that involve at most 1st order derivatives of $h$ and $Y$. 

Observe the range of $\Psi$ satisfies ${\rm Im}\Psi=\cL(V)$. Since $V$ is a subspace of $T\cB|_{(g,K)}$, the closedness of the range of $\cL$ will hold if we show ${\rm Im}\Psi$ has finite codimension in $\mathcal T$. As mentioned in the introduction, the constraint equations are actually part of the Einstein field equations on the spacetime $(V^{(4)},g^{(4)})$ where $(M,g,K)$ is embedded as an initial data set. So its linearization $\cL$ is part of the linearized Einstein equations. Moreover, it is proved in \cite{Az2} that the stationary Einstein equations (combined with proper gauge) together with the Bartnik boundary conditions form an elliptic boundary value problem in the phase space consisting of triples $(g,X,N)$ on $M$. Here $X$ and $N$ are understood as the shift vector and lapse function on the hypersurface $(M,g)\subset (V^{(4)},g^{(4)})$. So we can understand $(h,Y,v)\in\mathcal W$ defined above as the deformation of $(g,X,N)$ which preserves the Bartnik boundary data; and the map $\Psi$ can be taken as part of the linearized stationary Einstein field equations, which indicates that the map $\Psi$ is underdetermined elliptic. 

To carry out this idea, we first construct a differential operator $P=(L,B)$, with $L$ being the interior operator 
\begin{equation}\label{BVPL}
\begin{split}
&L:[S^{m,\alpha}_{\delta}\times T^{m,\alpha}_{\delta}\times C^{m,\alpha}_{\delta}](M)\to [S^{m-2,\alpha}_{\delta+2}\times C^{m-2,\alpha}_{\delta+2}\times (\wedge_1)^{m-2,\alpha}_{\delta+2}](M)\\
&\quad\quad\quad\quad L(h,Y,v)=
\big(
\cE_0(h,v),~
\Delta \tr h+\delta\delta h,~
\delta\delta^*Y+d\delta Y
\big)
\end{split}
\end{equation}
and $B$ the boundary operator 
\begin{equation}\label{BVPB}
\begin{split}
B:[S^{m,\alpha}_{\delta}\times &T^{m,\alpha}_{\delta}\times C^{m,\alpha}_{\delta}](M)\to [(\wedge_1)^{m-1,\alpha}\times S^{m,\alpha}\times (C^{m,\a})^2\times (\wedge_1)^{m-1,\alpha}](\p M)\\
&B(h,Y,v)=\big(
\delta h-3d v,~
h^T,~
H'_{h},~
\tr^T[\delta^*Y+(\delta Y)g],~
\delta^*Y(\mathbf{n})^T
\big),
\end{split}
\end{equation}
where the first component in $L(h,Y,v)$ is given by
\be\label{CE}
\cE_0(h,v)=Ein'_{h}+\delta^*\delta h-(\delta\delta h)g-4D^2 v+2(\Delta  v)g.
\ee
If we use $^{(4)}Ein$ to denote the Einstein tensor of the spacetime $(V^{(4)},g^{(4)})$, then the interior operator $L$ can be interpreted as mapping $(h,Y,v)$ to the principal part of the linearization $^{(4)}Ein'_{(h,Y,v)}$ combined with an extra term $[\delta^*\delta h-(\delta\delta h)g-4D^2 v+2(\Delta  v)g]$ which serves as a gauge term. Note that the choice of the gauge term is not unique and the one we choose here is for simplicity of the proof of ellipticity to follow. Observe the boundary operator $B$ maps $(h,Y,v)$ to the leading order part of the conditions listed in \eqref{CW}. Using the method developed in \cite{AK}, one can prove that $P$ is an elliptic operator. We refer to \S2.3 for the detailed proof of ellipticity. 

Notice that the leading order terms in formula \eqref{PS2} of $\Psi$ differ from the 2nd and 3rd bulk terms in \eqref{BVPL} only by non-vanishing rescalings $\sqrt{g},(-2)$ which preserve ellipticity. Moreover, adding lower order derivatives to a differential operator won't affect its ellipticity either. So we can make the replacement with $\Psi$ in \eqref{BVPL} and also modify the last boundary term in \eqref{BVPB} to be the last one listed in \eqref{CW}. The resulting differential operator $P'=(L',B')$:
\begin{equation}\label{BVP1}
\begin{split}
&L':[S^{m,\alpha}_{\delta}\times T^{m,\alpha}_{\delta}\times C^{m,\alpha}_{\delta}](M)\to [S^{m-2,\alpha}_{\delta+2}\times C^{m-2,\alpha}_{\delta+2}\times (\wedge_1)^{m-2,\alpha}_{\delta+2}](M)\\
&\quad\quad L'(h,Y,v)=
\big(
\cE_0(h,v),~
\Psi(h,Y)
\big),\\
&B':[S^{m,\alpha}_{\delta}\times T^{m,\alpha}_{\delta}\times C^{m,\alpha}_{\delta}](M)\to [(\wedge_1)^{m-1,\alpha}\times S^{m,\alpha}\times (C^{m,\a})^2\times (\wedge_1)^{m-1,\alpha}](\p M)\\
&\quad\quad B'(h,Y,v)=\big(
\delta h-3d v,~
h^T,~
H'_{h},~
\tr^T[\delta^*Y+(\delta Y)g],~
\delta^*Y(\mathbf{n})^t+K(\mathbf n'_h)^T
\big)
\end{split}
\end{equation}
is also elliptic, which further implies that the map $\mathcal P$ defined below is Fredholm:
\begin{equation}\label{CP}
\begin{split}
&\mathcal P: \mathcal W\to S^{m-2,\alpha}_{\delta+2}(M)\times\mathcal T,\\
\mathcal P(h&,Y,v)=\big(
\cE_0(h,v),~
\Psi(h,Y)
\big).
\end{split}
\end{equation}
Thus the range $\text{Im}\mathcal P$ is closed and has finite codimension in the target space $S^{m-2,\alpha}_{\delta+2}(M)\times\mathcal T$. Let $\Pi$ be the projection 
$\Pi: S^{m-2,\alpha}_{\delta+2}(M)\times\mathcal T\to\mathcal T.$ Projecting the image of $\mathcal P$ to the second component, we obtain $\Pi(\text{Im}\mathcal P)=\text{Im}\Psi$. It must also be of finite codimension in $\mathcal T$. This completes the proof of the closed range of $\cL$.

\medskip

Now to prove the surjectivity of $\cL$ it suffices to show $\cL$ has trivial cokernel. We prove this by contradiction. 
Suppose ${\rm Coker}\cL$ is non-trivial. Then by the Hahn-Banach Theorem, there is a nontrivial element $\hat X$ in the dual space $\mathcal T^*$ so that
\begin{equation}\label{c11}
\begin{split}
\hat X[\cL(h,p)]=0,\mbox{ for all}~ (h,p)\in T\mathcal B.
\end{split}
\end{equation}
Here $\hat X$ can be decomposed as $\hat X=(X^0,X)~(X=X^i,~i=1,2,3)$ where $X^0\in \big(C^{m-2,\alpha}_{\delta+2}(M)\big)^*$ and $X\in\big((\wedge_1)^{m-2,\alpha}_{\delta+2}(M)\big)^*$ such that 
\begin{equation}\label{c12}
\begin{split}
\begin{cases}
X^0[(D\Phi_0)_{(g,K)}(h,p)]=0\\
X[(D\Phi_i)_{(g,K)}(h,p)]=0
\end{cases}\mbox{ for all } (h,p)\in T\mathcal B|_{(g,K)}.
\end{split}
\end{equation}
Here $(D\Phi_0)_{(g,K)}(h,p)),~(D\Phi_i)_{(g,K)}(h,p))$ are components of the linearized constraint map given in \eqref{lu}-\eqref{lz}; and we use $X^0[\cdot],X[\cdot]$ to denote the distributional pairing.
We first prove that $X^0,X$ are $C^{m,\alpha}$ smooth in the interior ${\rm int}M$ of $M$. So the pairings above are actually integral pairings.
Based on the construction of the space $\cW$ and the map $\Psi$, we observe that \eqref{c11} implies $\hat X[\Psi(h,Y)]=0$ for all $(h,Y,v)\in\cW$. It then follows trivially from the construction of $\cP$ that $(0,\hat X)$ is a cokernel element of $\mathcal P$, i.e. the pairing
$$(0,\hat X)[\mathcal P(h,Y,v)]=0,\mbox{ for all } (h,Y,v)\in \mathcal W.$$
Thus $(0,\hat X)$ is a weak solution of the elliptic equation
\begin{equation}\label{c3}
\begin{split}
\mathcal P^*(0,\hat X)=0
\end{split}
\end{equation}
in the interior of $M$. 
We follow the approach in \cite{LM} (Chapter 2, Theorem 3.2) to improve regularity of $\hat X$.
Take bounded open domains $V,U\subset {\rm int}M$ such that $V\subset \bar V\subset U$. Using the chart $M\cong\mathbb R^3\setminus B$, we can identify $U$ as a bounded domain in $\mathbb R^3$ and $X^{\mu}~(\mu=0,1,2,3)$ as distributions in $U$. Let $\phi$ be a smooth cutoff function which equals $1$ in $V$ and compactly supported in $U$. So $Y^{\mu}=\phi X^{\mu}$ are compactly supported distributions in $\mathbb R^3$ and  $(0,Y)$ is a weak solution to the elliptic equation \eqref{c3} inside $V$. Take the Fourier transform of $Y^{\mu}$
\begin{equation*}
\begin{split}
F(Y^{\mu})(y)=\frac{1}{(2\pi)^{3/2}}\int_{\mathbb R^3}e^{-ix\cdot y}Y^{\mu}(x)dx.
\end{split}
\end{equation*}
Let ${\w Y}^{\mu}$ be distributions in $\mathbb R^3$ such that their Fourier transforms are given by
\begin{equation*}
\begin{split}
F({\w Y}^{\mu})(y)=\big(\frac{1}{1+|y|^2}\big)^kF(Y^{\mu})(y)
\end{split}
\end{equation*}
It then follows that
\begin{equation}\label{cz1}
(I+\Delta_0)^k{\w Y}^{\mu}=Y^{\mu}\quad (\mu=0,1,2,3)\ \ {\rm in}~\bR^3.
\end{equation}
Here $\Delta_0$ is the Laplacian with respect to the flat metric in $\mathbb R^3$. The order $k$ is chosen so that $2k\geq m+1$. 
Based on Sobolev embedding $H^{m+1}(\mathbb R^3)\subset C^{m-2,\alpha}(\mathbb R^3)$ for $m\geq 2,\alpha\in(0,1)$, we have ${Y}^{\mu}\in(C^{m-2,\alpha}(\mathbb R^3))^*\subset H^{-m-1}(\mathbb R^3)$, and hence by construction (choice of $k$) ${\w Y}^{\mu}\in L^2$. 

Now ${\w Y}^{\mu}$ are $L^2$ functions solving the following elliptic system in $V$ 
\begin{equation}\label{cz2}
\begin{split}
\mathcal P^*(0,(I+\Delta_0)^k {\w Y}^{\mu})=0.
\end{split}
\end{equation}
Note that the coefficients in the elliptic system above are at least $C^{m-2,\alpha}$ smooth . Thus by interior regularity for elliptic equations (cf.\cite{Morrey} Theorem 6.2.6), we can obtain ${\w Y}^{\mu}\in C^{m+2k,\alpha}(V')$ for any compact subset $V'\subset V$. Then it follows from equation \eqref{cz1} that $Y^{\mu}\in C^{m,\alpha}(V')$. By a partition of unity argument, it is easy to see that $X^{\mu}$ is $C^{m,\alpha}$ smooth in ${\rm int}M$.

Next we prove $\hat X=0$ in ${\rm int}M$. 
By basic computation with integration by parts, \eqref{c12} implies that $\hat X$ is a solution of the following equations on ${\rm int}M$ (cf. for example \cite{Mo}):
\begin{equation}\label{c2}
\begin{cases}
2X^0K+L_{X}g=0,\\
D^2X^0+L_{X}K+X^0[-Ric_g+2K\circ K-(trK)K+\tfrac{1}{4}ug]=0.
\end{cases}
\end{equation}
Since $\hat X$ is $C^{m,\a}$ in ${\rm int}M$ and by assumption $m>2$, according to Proposition 2.1 in \cite{BC}, there exist constants $\Lambda_{\mu\nu}=\Lambda_{[\mu\nu]}~(\mu,\nu=0,1,2,3)$ such that
\begin{equation}\label{ax1}
\begin{split}
X^i-\Lambda_{ij}x^j\in C^m_{\d-1}(M),\quad X^0-\Lambda_{0i}x^i\in C^m_{\d-1}(M);
\end{split}
\end{equation}
or there exist constants $A^{\mu}$ such that 
\begin{equation}\label{ax2}
\begin{split}
X^i-A^i\in C^m_{\d}(M),\quad X^0-A^0\in C^m_{\d}(M).
\end{split}
\end{equation}
On the other hand, $\hat X$ is also a bounded linear functional on $\cT$, so we must have $\Lambda_{\mu\nu}=A^{\mu}=0$ (cf. appendix \S4.5 for the detailed proof). Then according to the same proposition of \cite{BC}, we must have $\hat X=0$ in ${\rm int}M$.
Therefore $ \hat X[(\bar u,\bar Z)]=0$ for any compactly supported $(\bar u,\bar Z)\in\mathcal T$ and hence the same for any $(u_0,Z_0)\in\mathcal T$ which vanishes on $\partial M$.

Furthermore, it is easy to show that any $(u,Z)\in\mathcal T$ can be decomposed as $(u,Z)=(u_0,Z_0)+(u_1,Z_1)$ where $(u_0,Z_0)$ vanishes on the boundary and $(u_1,Z_1)\in\text{Im}\cL$ (cf. appendix \S4.2 for a detailed proof). 
So $\hat X[(u,Z)]= \hat X[(u_0,Z_0)]+ \hat X[(u_1,Z_1)]=0$ for all $(u,Z)\in\mathcal T$, i.e. $\hat X=0$. This completes the proof of trivial cokernel.

Summing up the results above, we have proved that 
\begin{Proposition}
The linearized constraint map given in \eqref{lconsm} is surjective.
\end{Proposition}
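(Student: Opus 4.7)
The plan is to establish surjectivity by showing $\cL$ has both closed range and trivial cokernel, from which Hahn--Banach gives surjectivity. The map $\cL$ is underdetermined, and hence not elliptic on its own, so the strategy is to restrict it to a subspace where it becomes the spatial piece of an elliptic boundary-value problem built from the linearised stationary vacuum Einstein system, and then run a KID-type rigidity argument on the cokernel.

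For closed range, I would impose the ansatz $p = \delta^* Y + (\delta Y) g$ and augment $(h, Y)$ with an auxiliary scalar $v$ coupled by the De~Turck-type gauge $\delta h - 3 dv = 0$; this packages tangent vectors in a subspace $V \subset T\cB|_{(g,K)}$ parametrised by $\cW$. On $V$ the constraint map factors through $\Psi(h,Y)$ as in \eqref{PS1}, and coupling with the gauge operator $\cE_0(h,v)$ of \eqref{CE} produces a square operator $\cP$ whose principal symbol matches the linearised Einstein tensor modulo gauge. Combined with the boundary operator $B'$ encoding the infinitesimal Bartnik data plus a trace-type condition on the gauge scalar, this should be an Agmon--Douglis--Nirenberg elliptic BVP, verifiable by a Lopatinski--Shapiro computation in the style of \cite{AK}. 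Fredholmness on weighted H\"older spaces (the weight $1/2 < \delta < 1$ is non-exceptional) gives $\Image\cP$ closed with finite codimension, and projecting onto the $\cT$-factor transfers this to $\Image\Psi$ and hence to $\Image\cL$.

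For trivial cokernel I would argue by contradiction: a nonzero $\hat X = (X^0, X^i) \in \cT^*$ annihilating $\Image\cL$ also annihilates the $\cT$-component of $\Image\cP$, so $(0,\hat X)$ solves $\cP^*(0,\hat X) = 0$ weakly on $\interior M$. A Fourier-multiplier smoothing (apply $(I + \Delta_0)^{-k}$ to a compactly supported cutoff of $\hat X$ and bootstrap by Morrey's interior regularity, following the template of \cite{LM}) upgrades $\hat X$ to $C^{m,\alpha}_{\mathrm{loc}}$ in $\interior M$. Integration by parts against arbitrary $(h,p) \in T\cB|_{(g,K)}$ then forces $\hat X$ to satisfy the Moncrief KID system \eqref{c2}.

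The main obstacle is ruling out a nonzero KID which is simultaneously a bounded functional on the weighted space $\cT$. For this I would appeal to Beig--Chru\'sciel asymptotic rigidity \cite{BC}: any KID on an asymptotically flat end admits an expansion $X^\mu = \Lambda_{\mu\nu} x^\nu + A^\mu + o(1)$ with $\Lambda_{\mu\nu}$ skew, and boundedness of $\hat X$ as a functional on $C^{m-2,\alpha}_{\delta+2}$ forces $\Lambda_{\mu\nu} = A^\mu = 0$ by pairing against carefully normalised compactly supported source terms of controlled moments. The same proposition of \cite{BC} then gives $\hat X \equiv 0$ on $\interior M$, so $\hat X$ annihilates every $(u,Z)\in\cT$ vanishing on $\p M$. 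A final decomposition step, writing an arbitrary $(u,Z)\in \cT$ as a boundary-vanishing piece plus an element of $\Image\cL$ realising its boundary trace (constructed by directly solving a linear boundary-value problem for the constraint variation), closes the contradiction.
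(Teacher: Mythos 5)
Your proposal follows essentially the same route as the paper: closed range via the ansatz $p=\delta^*Y+(\delta Y)g$ with the auxiliary gauge scalar $v$, coupling $\Psi$ with $\cE_0(h,v)$ to form the elliptic (Fredholm) operator $\cP$ and projecting to the $\cT$-factor; and trivial cokernel via Hahn--Banach, Fourier-multiplier interior regularity, reduction to the KID system \eqref{c2}, Beig--Chru\'sciel asymptotics killed by the boundedness of $\hat X$ as a functional on the weighted space, and the final decomposition of $(u,Z)$ into a boundary-vanishing piece plus an element of $\Image\cL$. This matches the paper's argument in both structure and all key steps.
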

\subsection{Splitting Kernel} We apply the approach developed in \cite{W} to prove the kernel of the linearized constraint map $\cL=D\Phi|_{(g,K)}$ at $(g,K)\in \Phi^{-1}(u,Z)$ splits in the domain space $T\cB|_{(g,K)}$. 
We first state the following proposition.
\begin{proposition}
The tangent space $T\mathcal B|_{(g,K)}$ given in \eqref{lBcon} admits a splitting
\begin{equation}\label{DT1}
\begin{split}
T\mathcal B|_{(g,K)}=S_1\oplus S_2
\end{split}
\end{equation}
where $S_1,S_2$ are closed subspaces such that the range of the restricted map $\cL|_{S_1}:S_1\to\cT$ has finite codimension. Moreover, the kernel of $\cL$ splits in $S_1$, i.e. there is a closed subspace $S$ such that
\begin{equation}\label{KS1}
\begin{split}
S_1=S\oplus [\cL^{-1}(0)\cap S_1].
\end{split}
\end{equation}
\end{proposition}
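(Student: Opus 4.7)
The plan is to exploit the Fredholm property of the elliptic operator $\cP$ from \eqref{CP} (established via the ellipticity analysis of \S2.3) in order to extract from $\cW$ a closed slice that realises $S_1$ inside $T\cB|_{(g,K)}$ and on which $\cL$ behaves Fredholm-like. Once this slice is in place, the finite codimension of $\cL(S_1)$ is inherited from $\mathrm{Im}\,\cP$ by projection, while the kernel splitting follows from the Fredholm decomposition of $\cW$ together with a finite-dimensional linear-algebra step.

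Concretely, I would take $S_1 := F(\cW) \subset T\cB|_{(g,K)}$, where $F(h,Y,v) := (h, \d^*Y + (\d Y) g)$ is the map already used in the surjectivity proof (so $S_1$ coincides with the space $V$ there). Injectivity of $F$ is direct: $F(h,Y,v)=0$ forces $h=0$ and $\d^*Y+(\d Y)g=0$; tracing the latter gives $\d Y=0$, which then forces $\d^*Y=\tfrac12 L_Y g=0$, so $Y$ is a Killing field decaying at infinity, hence $Y=0$, and then the gauge condition $\d h-3 dv=0$ with $h=0$ and $v\in C^{m,\a}_\d$ forces $v=0$. Closedness of $S_1$ in $T\cB|_{(g,K)}$ reduces, after unwinding definitions, to a weighted H\"older estimate for the overdetermined first-order operator $Y\mapsto \d^*Y+(\d Y)g$ together with the scalar gauge equation $\d h-3 dv=0$, both subject to the linearised Bartnik boundary data. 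A closed complement $S_2$ is then produced as a finite-dimensional lift, using the surjectivity of $\cL$ from Proposition~2.1, of a finite-dimensional complement of $\mathrm{Im}\,\Psi$ in $\cT$.

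Finite codimension of $\cL(S_1)=\mathrm{Im}\,\Psi=\Pi(\mathrm{Im}\,\cP)$ in $\cT$ is then immediate from the finite codimension of $\mathrm{Im}\,\cP$ in $S^{m-2,\a}_{\d+2}(M)\times\cT$. For the kernel splitting, since $\cP$ is Fredholm I decompose $\cW=\ker\cP\oplus\cW'$, so that $\cP|_{\cW'}:\cW'\to\mathrm{Im}\,\cP$ is a topological isomorphism. Inside $\cW'$, the preimage
\begin{equation*}
\cW'' := \cP|_{\cW'}^{-1}\bigl(\mathrm{Im}\,\cP \cap (S^{m-2,\a}_{\d+2}(M)\times\{0\})\bigr)
\end{equation*}
is closed and coincides with $\cW'\cap\Psi^{-1}(0)$. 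The finite codimension of $\mathrm{Im}\,\cP$ means belonging to $\mathrm{Im}\,\cP$ is given by finitely many linear conditions coupling the $S^{m-2,\a}_{\d+2}$-component and the $\cT$-component, and solving these conditions for the first in terms of the second produces a bounded linear section of $\Pi|_{\mathrm{Im}\,\cP}:\mathrm{Im}\,\cP\to\mathrm{Im}\,\Psi$. Transporting this section through $\cP|_{\cW'}$ gives a closed complement $\cW'''$ of $\cW''$ in $\cW'$; then $S:=F(\cW''')$ is the desired closed complement of $\cL^{-1}(0)\cap S_1$ in $S_1$.

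The step I expect to be the main obstacle is the closedness of $F(\cW)$ in $T\cB|_{(g,K)}$, equivalently the boundedness of $F^{-1}$ on its image in the weighted H\"older topology. This rests on a weighted elliptic estimate for the overdetermined first-order operator on $Y$ with the linearised Bartnik boundary data, of the same family as the estimates developed in \S2.3; formulating it cleanly and handling the boundary term is the most technically demanding part of the argument, while the Fredholm-based kernel-splitting manipulations above are essentially algebraic once this analytic input is secured.
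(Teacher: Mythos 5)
The decisive gap is your claim that a closed complement $S_2$ of $S_1=F(\cW)$ in $T\cB|_{(g,K)}$ can be taken \emph{finite-dimensional}, obtained by lifting a finite-dimensional complement of ${\rm Im}\,\Psi$ in $\cT$ through the surjectivity of $\cL$. The quotient $T\cB|_{(g,K)}/S_1$ is infinite-dimensional, so no such $S_2$ exists. Indeed, fix $h=0$: the admissible momenta in $T\cB|_{(g,K)}$ are exactly $S_0=\{p:\ \tr^Tp=0,\ p(\mathbf n)^T=0\ \text{on}\ \p M\}$, while those captured by your $S_1$ are only ${\rm Im}\,Q$ with $Q(Y)=\d^*Y+(\d Y)g$; by the paper's Lemma 2.5, $S_0={\rm Im}\,Q\oplus{\rm Ker}\,Q^*$, and ${\rm Ker}\,Q^*$ (the kernel of the underdetermined operator $\d+d\tr$) is infinite-dimensional. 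Independently, membership in $\cW$ forces $(\d h)^T=3d^Tv$ to be exact on $\p M$, whereas a general element of $T\cB|_{(g,K)}$ has an arbitrary coclosed part of $(\d h)^T$, again an infinite-dimensional family on $S^2$. Your lifting argument conflates two different things: a finite-dimensional lift of a complement of ${\rm Im}\,\Psi$ gives a subspace $S_1\oplus S_2$ whose $\cL$-image is all of $\cT$, but says nothing about spanning $T\cB|_{(g,K)}$ — everything in the (infinite-dimensional) kernel of $\cL$ outside $S_1$ is missed. Nor can one simply invoke the existence of \emph{some} closed complement, since closed subspaces of Banach spaces need not be complemented; constructing $S_2$ is precisely the content of the paper's proof, which uses the boundary Hodge decomposition $(\d h)^T=d^Tv_h+\tau_h$, the extension operators $E_1,E_2$, and Lemma 2.5 to produce the explicitly infinite-dimensional complement $S_2=\{(h,p):\ h\in{\rm Im}\,E_1,\ p\in{\rm Ker}\,Q^*\}$ (Lemma 2.6). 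Note also that the downstream proof of Proposition 2.4 genuinely needs a decomposition of the whole tangent space with an $S_2$ of this kind, on which $\pi_K\circ\cL$ has finite-dimensional target but infinite-dimensional domain.

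Two secondary problems: first, $F$ is not injective, because in $\cW$ the gauge condition $\d h-3dv=0$ is imposed only on $\p M$, so $(0,0,v)$ with $dv|_{\p M}=0$ lies in $\ker F$; hence your reduction of closedness of $S_1$ to ``boundedness of $F^{-1}$'' does not parse as stated (the paper instead derives closedness of its $S_1$ from the closed range of $Q$ and the closedness of the space of exact boundary 1-forms). Second, with $S_1=V$ the boundary condition in $\cW$ couples $Y$ to $h$ through $\d^*Y(\mathbf n)^T+K(\mathbf n'_h)^T=0$, which obstructs splitting a general momentum $p$; the paper's $S_1$ is modified by the zeroth-order correction $E_2(h)$ (and the space $\cW'$ with $\d^*Y(\mathbf n)^T=0$) exactly so that $p-E_2(h)\in S_0$ can be decomposed by Lemma 2.5. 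Your Fredholm-based manipulations for the kernel splitting are broadly in the spirit of the paper's Lemma 2.7 and could likely be repaired, but only after the construction of $S_2$ and the closedness of $S_1$ are actually supplied rather than asserted.
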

Assuming the above proposition holds, we can then prove:
\begin{proposition}
The linearized constraint map given in \eqref{lconsm} has splitting kernel.
\end{proposition}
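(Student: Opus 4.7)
The plan is to construct a bounded linear right inverse $\rho:\cT\to T\cB|_{(g,K)}$ of $\cL$. Once such a $\rho$ is in hand, $P:=\rho\circ\cL$ is a bounded idempotent on $T\cB|_{(g,K)}$: indeed $P^2=\rho(\cL\rho)\cL=\rho\cL=P$. Its complement $I-P$ is then a bounded idempotent with image precisely $\ker\cL=\cL^{-1}(0)$ (since $\rho$ is injective, $Px=0$ iff $\cL x=0$) and kernel $\mathrm{Im}(\rho)$. This exhibits $\cL^{-1}(0)$ as a topological direct summand of $T\cB|_{(g,K)}$, which is the splitting claimed in the proposition.

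To produce $\rho$ I would combine Propositions 2.1 and 2.2. From the splitting $S_1=S\oplus[\cL^{-1}(0)\cap S_1]$ of Proposition 2.2, the map $\cL|_S:S\to\cL(S_1)$ is a bounded bijection: surjectivity onto $\cL(S_1)$ holds because every $s_1=s+k$ with $k\in\ker\cL$ satisfies $\cL(s_1)=\cL(s)$, while injectivity follows from $S\cap\cL^{-1}(0)=0$. Since $\cL(S_1)$ has finite algebraic codimension in the Banach space $\cT$, it is automatically closed (a standard consequence of the open mapping theorem applied to the continuous bijection $(S_1/\ker(\cL|_{S_1}))\oplus F_0\to\cT$ for any algebraic complement $F_0$). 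Thus $\cL(S_1)$ is itself a Banach space, and the open mapping theorem produces a bounded inverse $\rho_1:=(\cL|_S)^{-1}:\cL(S_1)\to S$. Next, pick a finite-dimensional complement $F\subset\cT$ with $\cT=\cL(S_1)\oplus F$. Proposition 2.1 gives surjectivity of the full map $\cL$, so I can lift a basis $\{f_1,\dots,f_N\}$ of $F$ to preimages $x_i\in T\cB|_{(g,K)}$ with $\cL(x_i)=f_i$, and extend linearly to $\rho_2:F\to T\cB|_{(g,K)}$, which is automatically bounded because $\dim F<\infty$. Setting $\rho(y_1+y_2):=\rho_1(y_1)+\rho_2(y_2)$ for $y_1\in\cL(S_1)$ and $y_2\in F$ gives a bounded linear map satisfying $\cL\circ\rho=\mathrm{id}_\cT$.

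The only substantive step in this scheme is the verification that $\cL(S_1)$ is closed, so that the open mapping theorem applies to $\cL|_S$; everything else is a bounded linear assembly on a finite-dimensional summand. This closedness is exactly what the finite-codimension conclusion of Proposition 2.2 gives, via the functional-analytic fact recalled above. With $\rho$ thus constructed, the decomposition $T\cB|_{(g,K)}=\mathrm{Im}(\rho)\oplus\cL^{-1}(0)$ follows immediately from the projection $I-P$, completing the proof of Proposition 2.3.
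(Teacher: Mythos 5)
Your argument is correct, and it reaches the conclusion by a genuinely different assembly than the paper's. The paper (following White) never builds a right inverse of $\cL$: it uses the full decomposition $T\cB|_{(g,K)}=S_1\oplus S_2$, the bounded inverse $\w\cL=(\cL|_S)^{-1}$, and the finite-rank map $\pi_K\circ\cL:S_2\to K$ to write down an explicit bounded projection $\Pi$ of $T\cB|_{(g,K)}$ onto $\Ker\cL$; notably, its proof of this proposition does not invoke surjectivity of $\cL$ at all. You instead use only the $S_1$-half of that splitting statement (which is Proposition 2.3 in the paper's numbering, while surjectivity is Proposition 2.2), namely $S_1=S\oplus[\cL^{-1}(0)\cap S_1]$ together with the finite codimension of $\cL(S_1)$ -- and, as you correctly argue, $\cL(S_1)$ is then automatically closed, being the range of a bounded operator of finite algebraic codimension, via the open mapping theorem applied to $(S_1/\ker(\cL|_{S_1}))\oplus F_0\to\cT$ -- combined with surjectivity of $\cL$ to lift a basis of a finite-dimensional complement $F$. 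This yields a bounded right inverse $\rho$, and then $I-\rho\cL$ is the desired bounded projection onto $\ker\cL$, giving $T\cB|_{(g,K)}=\mathrm{Im}\,\rho\oplus\ker\cL$. Both routes rest on the same hard input (the $S_1$ splitting and finite codimension of $\cL(S_1)$, proved later in the section), so there is no circularity; your route is shorter and exhibits $\cL$ as a split surjection in one stroke, which is precisely the submersion property needed for the implicit function theorem, whereas the paper's route produces the kernel projection without using surjectivity and handles the $S_2$ component purely through the finite-dimensional map $\pi_K\circ\cL$. The points you leave implicit -- that $\cT=\cL(S_1)\oplus F$ is automatically a topological direct sum since $\cL(S_1)$ is closed and $F$ finite dimensional, and that $S_1$ and $S$ are closed so your quotient is Banach -- are supplied by the splitting proposition and standard functional analysis, so there is no gap.
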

\begin{proof}
Here we apply the approach developed in \cite{W}. 
Decompose the target space $\mathcal T$ as 
\begin{equation}\label{DT}
\mathcal T=\cL(S_1)\oplus K
\end{equation} 
with $\text{dim} K<\infty$. Based on the decomposition \eqref{KS1}, the restricted linear map $\cL|_S$ given by
\begin{equation*}
\begin{split}
\cL|_{S}:S\to \cL(S_1)
\end{split}
\end{equation*} 
is bounded and bijective. By the open map theorem, it admits a bounded inverse denoted by $\w \cL=(\cL|_S)^{-1}$.
Let $\w\pi$ denote the projection from $S_1$ onto $[\cL^{-1}(0)\cap S_1]$, and $\pi_K$ denote the projection from $\mathcal T$ onto $K$. We then obtain the following description of the kernel ${\rm Ker}\cL$ in $T\mathcal B|_{(g,K)}$
\begin{equation*}
\begin{split}
&{\rm Ker}\cL=\{(h,p)\in T\mathcal B|_{(g,K)}:~\cL (h,p)=0\}\\
&=\{(h,p)=(h_1,p_1)+(h_2,p_2):~(h_1,p_1)\in S_1,~(h_2,p_2)\in S_2,\text{ and }\cL (h_1,p_1)=-\cL(h_2,p_2)\}\\
&=\{(h,p)=(h_1,p_1)+(h_2,p_2):~(h_1,p_1)\in S_1,~(h_2,p_2)\in{\rm Ker }(\pi_K\circ \cL)\cap S_2,~\cL(h_1,p_1)=-\cL(h_2,p_2)\}\\
&=\{(h,p)=(h_1,p_1)+(h_2,p_2):(h_2,p_2)\in\text{Ker}(\pi_K\circ \cL)\cap S_2,~(h_1,p_1)=\w \cL\big(-\cL(h_2,p_2)\big)+\w\pi(h_1,p_1)\}.
\end{split}
\end{equation*}
The third equality above is based on that $\cL (h_1,p_1)=-\cL(h_2,p_2)$ implies $\cL(h_2,p_2)\in \cL(S_1)$ and hence $\pi_K\circ \cL(h_2,p_2)=0$ according to \eqref{DT}. 
In the last equality we use the inverse map $\w \cL$ and projection $\w\pi$ to express $(h_1,p_1)\in S_1$ based on $\cL(h_1,p_1)=-\cL(h_2,p_2)$.
Since the map $\pi_K\circ \cL: S_2\to K$ has a target space of finite dimension, its kernel must be of finite codimension and hence splits in $S_2$. So there is a bounded projection $P$ from $S_2$ onto $\text{Ker}(\pi_K\circ \cL)\cap S_2$. Then we obtain a bounded projection from $T\mathcal B|_{(g,K)}$ onto $\text{Ker}\cL$ given by
\begin{equation*}
\begin{split}
&\Pi:T\mathcal B|_{(g,K)}\to {\rm Ker}\cL\\
\Pi:(h,p)=(h_1,p_1)+(h_2,&p_2)\mapsto\{P(h_2,p_2)+\w \cL[-\cL\big(P(h_2,p_2)\big)]+\w\pi(h_1,p_1)\}.
\end{split}
\end{equation*}
This completes the proof of splitting kernel.
\end{proof}

\medskip

Now we give the proof of Proposition 2.3. First notice that the space of 1-forms on the boundary manifold $\p M$ can be decomposed as $\wedge_1(\partial M)=\text{Im}d^T\oplus\text{Ker}\delta^T$. Here $d^T$ denotes the exterior derivative operator on the boundary $d^T: C^{m,\a}(\p M)\to (\wedge_1)^{m-1,\a}(\p M)$; and $\d^T$ denotes the divergence operator $\d^T:(\wedge_1)^{m-1,\a}(\p M)\to C^{m-2,\a}(\p M)$ with respect to the induced metric $g^T$. So for the 1-form $(\delta h)^T$ on $\p M$ induced by a general symmetric 2-tensor $h$ on $M$, there is $v_h\in C^{m,\alpha}(\partial M)$ and $\tau_h\in\text{Ker}\delta^T$ on $\partial M$ such that 
\begin{equation}\label{DH1}
(\delta h)^T=d^Tv_{h}+\tau_{h},
\end{equation}
where the 1-forms $d^Tv_h$ and $\tau_h$ are uniquely determined by $h$. Construct a bounded linear map
\begin{equation*}
\begin{split}
E_1: \text{Ker}\delta^T\to S^{m,\alpha}_{\delta}(M),
\end{split}
\end{equation*}
so that for any $\tau\in\text{Ker}\delta^T$, $\bar h=E_1(\tau)$ is a symmetric 2-tensor on $M$ and the following conditions hold
\begin{equation}\label{LE1}
(\delta \bar h)^T=\tau,\ \
\bar h^T=0,\ \
H'_{\bar h}=0,\ \
\mathbf n'_{\bar h}=0\quad\text{ on }\partial M.
\end{equation}
{There are many ways to construct such a map.  We refer to \S 4.3 for an appropriate candidate.}
Now given an element $(h,p)\in T\mathcal B|_{(g,K)}$, we can decompose $h$ as  
\begin{equation}\label{DH2}
h=[h-E_1(\tau_h)]+E_1(\tau_h)
\end{equation} 
where $\tau_h$ is uniquely determined by $h$ as in \eqref{DH1}. Notice that for the first part in the summation above, we have 
$\big(\delta[h-E_1(\tau_h)]\big)^T
=(\d h)^T-\tau_h
=d^Tv_h$ 
on $\partial M$. So $\big(\delta[h-E_1(\tau_h)]\big)^T\in{\rm Im}d^T$. Then it is easy to construct a scalar field $v$ on $M$ such that $\delta[h-E_1(\tau)]=3dv$ along the boundary, so that $\big([h-E_1(\tau_h)],v\big)$ satisfies the gauge condition in $\cW$ (cf. \eqref{CW}).

Next construct a bounded linear (0-order in $h$) map
$E_2:S^{m,\alpha}_{\delta}(M)\to S^{m,\alpha}_{\delta+1}(M)$
such that for any $h\in S^{m,\alpha}_{\delta}(M)$, $\w h=E_2(h)$ is a symmetric 2-tensor belonging to $S^{m,\a}_{\d+1}(M)$ and satisfying the following boundary conditions
\begin{equation}\label{LE2}
\tr^T\w h=0,\ \
\w h(\mathbf n)^T=-K(\mathbf n'_h)^T
\quad\text{on }\partial M.
\end{equation}
Just as for the map $E_1$, we refer to \S 4.3 for a possible construction of $E_2$. Now given an element $(h,p)\in T\mathcal B|_{(g,K)}$, one can first decompose $h$ as in equation \eqref{DH2} and then decompose $p$ as
\begin{equation}\label{DP1}
p=E_2[h-E_1(\tau_h)]+\big(p-E_2[h-E_1(\tau_h)]\big).
\end{equation}
Note $E_2$ is constructed such that its image is of decay rate $\delta+1$, which is the same as $p$. 
Moreover, combining conditions \eqref{LE1} and \eqref{LE2} we can derive that in the decomposition above the second component $\big(p-E_2[h-E_1(\tau_h)]\big)$ belongs to the subspace
\begin{equation}\label{S0}
\begin{split}
S_0=\{p\in S^{m-1,\alpha}_{\delta+1}(M): \tr^Tp=0,~p(\mathbf n)^T=0\text{ on }\partial M\}.
\end{split}
\end{equation}
We have the following lemma for this space.
\begin{lemma} The space $S_0$ defined in \eqref{S0} admits the following splitting:
\begin{equation*}
\begin{split}
S_0={\rm Im} Q\oplus {\rm Ker} Q^*
\end{split}
\end{equation*}
where $Q$ is the linear differential operator given by
\begin{equation*}
\begin{split}
&Q:T_0\to S_0\\
Q(&Y)=\delta^*Y+(\delta Y)g,
\end{split}
\end{equation*}
with $T_0=\{Y\in T^{m,\alpha}_{\delta}(M):\tr^T(\delta^*Y+(\delta Y)g)=0,\delta^*Y(\mathbf n)^T=0\text{ on }\partial M\}$. 
\end{lemma}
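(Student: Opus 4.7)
The strategy is to treat $Q^\ast Q$ as a self-adjoint second-order elliptic boundary value problem on vector fields over $M$ whose natural boundary conditions are precisely those defining $T_0$, and then to extract the decomposition from Fredholm theory in weighted H\"older spaces on asymptotically flat manifolds.

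First I would compute the formal $L^2$-adjoint of $Q$ via integration by parts. For $Y\in T^{m,\alpha}_\delta(M)$ and $p\in S^{m-1,\alpha}_{\delta+1}(M)$,
\[
\int_M \<QY,p\>\,dV = \int_M \<Y, Q^\ast p\>\,dV + \int_{\p M}\big[p({\bf n})^T\cdot Y^T - Y^1\,\tr^T p\,\big]\,dA,
\]
where $Y^1=\<Y,{\bf n}\>$ and $Q^\ast p=\delta p + d(\tr p)$ as a 1-form. The conditions $\tr^T p=0$ and $p({\bf n})^T=0$ defining $S_0$ make the boundary integral vanish for every $Y$, so $Q^\ast : S_0\to (\wedge_1)^{m-2,\alpha}_{\delta+2}(M)$ is a well-defined adjoint and $\int\<QY,p\>=\int\<Y,Q^\ast p\>$ whenever $p\in S_0$. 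Moreover, the two conditions defining $T_0$ read $\tr^T(QY)=0$ and $(QY)({\bf n})^T=0$, i.e.\ $Q(T_0)\subset S_0$, so $Q:T_0\to S_0$ is a well-defined bounded linear map.

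Next I would verify ellipticity. The principal symbol $\sigma_\xi(Q)(Y)=\tfrac12(\xi\otimes Y^\flat+Y^\flat\otimes\xi)+\<\xi,Y^\flat\>g$ is injective for $\xi\neq 0$: taking trace forces $\<\xi,Y^\flat\>=0$, after which contracting with $\xi^\sharp$ yields $|\xi|^2 Y^\flat=0$. Hence $Q^\ast Q$ is a genuinely elliptic second-order operator on vector fields. The boundary conditions defining $T_0$ are precisely the natural boundary conditions associated to the quadratic form $Y\mapsto \int_M\<QY,QY\>\,dV$, so the system consisting of $Q^\ast Q$ together with these conditions is a self-adjoint elliptic boundary value problem. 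Its Lopatinski--Shapiro condition can be verified via a symbol calculation at the boundary, analogous to the ellipticity analysis of \S 2.3. Standard Fredholm theory on weighted H\"older spaces on asymptotically flat manifolds (for $\tfrac12<\delta<1$) then yields that this problem is Fredholm, with kernel equal to $\Ker Q\cap T_0$ and closed image of finite codimension.

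With these ingredients in hand the decomposition is straightforward. Given $p\in S_0$, the equation $Q^\ast QY=Q^\ast p$ admits a solution $Y\in T_0$ because $Q^\ast p$ is $L^2$-orthogonal to $\Ker(Q^\ast Q)=\Ker Q\cap T_0$: indeed, for any such $Y_0$, $\int\<Q^\ast p,Y_0\>=\int\<p,QY_0\>=0$. Setting $r=p-QY$ gives $Q^\ast r=0$ and $r\in S_0$, hence $p=QY+r\in\text{Im}\,Q+\Ker Q^\ast$. Directness follows from the same orthogonality identity: if $QY\in\Ker Q^\ast$ with $Y\in T_0$, then $\int|QY|^2\,dV=\int\<Y,Q^\ast(QY)\>\,dV=0$, forcing $QY=0$. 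The main obstacle I anticipate is verifying the Lopatinski--Shapiro condition for the first-order boundary conditions $\tr^T(QY)=0$ and $(QY)({\bf n})^T=0$ on $Y$; since these are exactly the natural conditions coming from the coercivity of $\int|QY|^2$ on $T_0$, this check should parallel the ellipticity verification for $P'$ in \S 2.3.
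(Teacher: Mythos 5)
Your proposal is correct and takes essentially the same route as the paper's proof: compute $Q^*=\delta+d\tr$ by integration by parts, observe that the boundary terms vanish by the defining conditions of $S_0$ (and the decay at infinity), and obtain the splitting from solvability of the self-adjoint elliptic problem $Q^*QY=Q^*p$ with the $T_0$ boundary conditions, with directness coming from $\int|QY|^2=\int\langle Y,Q^*QY\rangle$. The one step you defer, the Lopatinski--Shapiro check for the $T_0$ conditions, is in fact already done in \S 2.3: since $Q^*Q=\delta\delta^*+d\delta$ and the $T_0$ conditions are exactly $B_1(Y)=0$, the boundary value problem is precisely the operator $P_1=(L_1,B_1)$ whose ellipticity is verified there.
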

\begin{proof}
The formal adjoint of $Q$ is $Q^*=\delta+d\tr$, acting on the space of symmetric 2-tensors $p\in S_0$. For $Y\in T_0$ and $p\in S_0$ the following equality holds
\begin{equation*}
\begin{split}
\int_{M}\langle\delta^*Y+(\delta Y)g,p\rangle d\vol_g
&=\int_{M}\langle Y, \delta p+dtrp \rangle d\vol_g+\int_{\partial M}p(\mathbf n,Y) -Y(\mathbf n)trp~d\vol_{g^T}\\
&=\int_{M}\langle Y, \delta p+dtrp \rangle d\vol_g,
\end{split}
\end{equation*}
where the boundary integral vanishes because
$p(\mathbf n,Y) -Y(\mathbf n)\tr p=p(\mathbf n,\mathbf n) Y(\mathbf n) -Y(\mathbf n)\tr p=-Y(\mathbf n)\tr^Tp=0$.
Besides, since $Y,p$ decay fast enough asymptotically, we have vanishing boundary term at infinity from the integration by parts.
It follows that $Q^*Q$ is a self-adjoint elliptic operator. In addition $\text{Ker}Q^*Q=\text{Ker}Q$ since $\int_M \<Q^*QY,Y\> d\vol_g=\int_M\<QY,QY\>d\vol_g$. Thus for any $p_0\in S_0$
\begin{equation*}
\begin{split}
\int_{M}\langle Q^*(p_0), Y\rangle d\vol_g=0\mbox{ for all } Y\in\text{Ker}Q^*Q,
\end{split}
\end{equation*}
i.e. $Q^*(p_0)$ is perpendicular to the kernel of $Q^*Q$. By self-adjointness of $Q^*Q$, ${\rm Ker}(Q^*Q)={\rm Coker}(Q^*Q)$. Thus $Q^*(p_0)\in{\rm Im}(Q^*Q)$, i.e. there exists a vector field $Y_0\in T_0$ such that 
$Q^*p_0=Q^*QY_0$.
So $p_0=QY_0+w_0$ with $w_0\in \text{Ker} Q^*$.
Furthermore, it is easy to check this decomposition is unique because ${\rm Im}Q\cap{\rm Ker}Q^*=\{0\}$.
\end{proof}

Going back to the decomposition of $p$ \eqref{DP1}, we can apply the lemma above to the second component . So $p$ can be further decomposed as
\begin{equation}\label{DP2}
p=E_2[h-E_1(\tau_h)]+Q(Y_p)+w_p.
\end{equation}
Here $Y_p\in T_0$ and $w_p\in{\rm Ker}Q^*$, and both of them are uniquely determined by $(h,p)$. Summing up the analysis above, we conclude that every element $(h,p)\in T\mathcal{B}|_{(g,K)}$ admits the following decomposition:
\begin{equation*}
\begin{split}
(h,p)=\big(~h-E_1(\tau_h),~E_2[h-E_1(\tau_h)]+Q(Y_p)~\big)+\big(~E_1(\tau_h),~w_p~\big).
\end{split}
\end{equation*}
It follows that 
\begin{equation}\label{DT2}
\begin{split}
T\mathcal{B}|_{(g,K)}=S_1+ S_2,
\end{split}
\end{equation}
where
\begin{equation*}
\begin{split}
&S_1=\{(h,p)\in T\mathcal{B}|_{(g,K)}:~ (\d h)^T\in{\rm Im}d^T \text{ on }\p M,~p=E_2(h)+Q(Y)\text{ on }M\text{ for some }Y\in  T_0~\},\\
&S_2=\{(h,p)\in T\mathcal{B}|_{(g,K)}:~h\in\text{Im}E_1,~p\in{\rm Ker}Q^*\}.
\end{split}
\end{equation*}
Then equation \eqref{DT1} in Proposition 2.3 will be true if the following lemma holds.
\begin{lemma} Equation \eqref{DT2} is a splitting of $T\cB|_{(g,K)}$, i.e. $S_1, S_2$ are closed subspaces and their intersection is trivial.
\end{lemma}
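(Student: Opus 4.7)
My plan is to verify the three required properties separately: (i) $S_1 \cap S_2 = \{0\}$, (ii) $S_1$ closed, and (iii) $S_2$ closed. The triviality of the intersection will follow almost immediately from the uniqueness in the decomposition \eqref{DH1} together with Lemma 2.5; the closedness statements will come from the fact that the auxiliary operators $E_1, E_2, Q, Q^*$ are all bounded (indeed differential) and that $\mathrm{Im}\,d^T$, $\mathrm{Im}\,Q$, $\mathrm{Ker}\,Q^*$ are closed subspaces.

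\textbf{Triviality of intersection.} Suppose $(h,p)\in S_1\cap S_2$. From $S_2$ there exists $\tau\in\mathrm{Ker}\,\delta^T$ with $h = E_1(\tau)$, and by the construction \eqref{LE1} we have $(\delta h)^T = \tau \in \mathrm{Ker}\,\delta^T$. On the other hand, the unique decomposition \eqref{DH1} gives $(\delta h)^T = d^T v_h + \tau_h$ with $\tau_h\in\mathrm{Ker}\,\delta^T$, so $\tau = \tau_h$. But $(h,p)\in S_1$ forces $(\delta h)^T\in\mathrm{Im}\,d^T$, and since $\mathrm{Im}\,d^T\cap\mathrm{Ker}\,\delta^T=\{0\}$, we conclude $\tau_h = 0$ and hence $h = E_1(0) = 0$. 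Then from $S_1$ and the linearity of $E_2$ we get $p = E_2(0) + Q(Y) = Q(Y)$ for some $Y\in T_0$, while from $S_2$ we get $p\in\mathrm{Ker}\,Q^*$. Lemma 2.5 yields $\mathrm{Im}\,Q\cap\mathrm{Ker}\,Q^* = \{0\}$, so $p=0$.

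\textbf{Closedness of $S_1$.} The defining boundary condition $(\delta h)^T\in\mathrm{Im}\,d^T$ is closed because, on $\partial M\cong S^2$, the Hodge decomposition gives $(\wedge_1)^{m-1,\alpha}(\partial M) = \mathrm{Im}\,d^T\oplus\mathrm{Ker}\,\delta^T$ as a topological direct sum (there are no harmonic $1$-forms on $S^2$), so $\mathrm{Im}\,d^T$ is a closed subspace, and the map $h\mapsto (\delta h)^T$ is continuous. The interior condition is $p - E_2(h) \in \mathrm{Im}\,Q$, and here $E_2$ is bounded by construction and $\mathrm{Im}\,Q$ is closed by Lemma 2.5; hence the preimage under the continuous map $(h,p)\mapsto p - E_2(h)$ is closed. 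Since $T\mathcal{B}|_{(g,K)}$ is itself closed, $S_1$ is closed.

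\textbf{Closedness of $S_2$.} Again $\mathrm{Ker}\,Q^*$ is closed since $Q^*$ is a continuous (differential) operator. To see that $\mathrm{Im}\,E_1$ is closed in $S^{m,\alpha}_\delta(M)$, I would argue that the map $h\mapsto\tau_h$ built from \eqref{DH1} is a continuous left inverse of $E_1$: if $E_1(\tau_n)\to h$ in $S^{m,\alpha}_\delta$, then $(\delta E_1(\tau_n))^T = \tau_n$ converges to $(\delta h)^T$, which must lie in $\mathrm{Ker}\,\delta^T$, and the uniqueness of \eqref{DH1} plus continuity of $E_1$ gives $h = E_1((\delta h)^T)\in\mathrm{Im}\,E_1$. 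Intersecting with the closed subspace $T\mathcal{B}|_{(g,K)}$ completes the proof.

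The main obstacle I anticipate is not any single step above but rather verifying that the specific constructions of $E_1$ and $E_2$ deferred to the appendix genuinely enjoy the boundedness and right-inverse properties I am tacitly invoking; once those are in hand, the argument is a straightforward decomposition-plus-uniqueness exercise.
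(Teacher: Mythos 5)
Your proposal is correct and follows essentially the same route as the paper: trivial intersection via the uniqueness of the decomposition $(\delta h)^T=d^Tv_h+\tau_h$ together with ${\rm Im}\,Q\cap{\rm Ker}\,Q^*=\{0\}$ from Lemma 2.5, closedness of $S_1$ from the closedness of ${\rm Im}\,d^T$ and ${\rm Im}\,Q$ with $E_2$ bounded, and closedness of $S_2$ from the left-inverse identity $\tau=(\delta E_1(\tau))^T$, which is exactly the paper's norm-control argument for ${\rm Im}\,E_1$. The only differences are cosmetic (preimage-of-closed-sets phrasing versus the paper's sequential arguments), and your caveat about the deferred constructions of $E_1,E_2$ matches the paper's appendix \S4.3.
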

\begin{proof} 
Observe $S_1,S_2$ are well-defined subspaces of $T\mathcal B|_{(g,K)}$. It suffices to show the following:
\begin{enumerate}
\item The intersection $S_1\cap S_2=\{0\}$. Assume $(h_0,p_0)\in S_1\cap S_2$. 
So $(\delta h_0)^T=d^Tv_0$ on $\p M$ for some scalar field $v_0$. On the other hand, there exist $\tau_0\in\text{Ker}\delta^T$ such that $h_0=E_1(\tau_0)$. It follows that 
$\tau_0=(\delta h_0)^T=d^Tv_0$ on $\partial M$. Then $\delta^Td^Tv_0=0$, which implies that $v_0$ is a constant function and hence $\tau_0=0$. Thus $h_0=E_1(0)=0$ and it follows that $p_0\in{\rm Im}Q\cap{\rm Ker}Q^*$ which further implies that $p_0=0$.

\item The subspace $S_1$ is closed. Suppose there is a sequence $\{(h_i,p_i)\}~(i=1,2,3,...)$ in $S_1$ that converges to $(h_0,p_0)\in T\mathcal B|_{(g,K)}$. For every $i$, $(\delta h_i)^T\in {\rm Im}d^T$ on the boundary. So $(\delta h_i)^T$ is a closed 1-form on $\partial M$. It follows that $(\delta h_0)^T$ is also closed and hence exact i.e. $(\delta h_0)^T\in{\rm Im}d^T$. Secondly, for each $i$ there exists $Y_i\in T_0$ such that $p_i=E_2(h_i)+Q(Y_i)$; and convergence of $h_i$ and $p_i$ implies the sequence $Q(Y_i)=p_i-E_2(h_i)$ converges to $p_0-E_2(h_0)$. Since the range of $Q$ is closed, there exists some $Y_0\in  T_0$ such that $p_0-E_2(h_0)=Q(Y_0)$. So we can conclude the limit $(h_0,p_0)\in S_1$.

\item The subspace $S_2$ is closed. Obviously ${\rm Ker}Q^*$ is closed. In addition the map $E_1$ must also have closed range, because for any 1-form $\tau\in{\rm Ker}\d^T$ on $\p M$ we have $\tau=\big(\d[E_1(\tau)]\big)^T$, i.e. the norm of $\tau$ is controlled by the norm of its image $E_1(\tau)$.
 This completes the proof.
\end{enumerate}
\end{proof}

Next we prove the properties of the subspace $S_1$ stated in Proposition 2.3. Define the following space 
\begin{equation}\label{CW1}
\begin{split}
\mathcal W'=\{&(h,Y, v)\in [S^{m,\alpha}_{\delta}\times  T^{m,\alpha}_{\delta}\times C^{m,\alpha}_{\delta}](M):\\
 &~\delta h-3dv=0,~h^T=0,~H'_h=0,~\tr^T(\d^* Y)+2(\d Y)=0,~\d^*Y({\bf n})^T=0
\text{ on }\partial M
\}.
\end{split}
\end{equation}
Notice that the only difference between $\cW'$ and  $\mathcal W$ in \eqref{CW} is lower order terms of $h$ in the last boundary equation. As previously, let $\mathcal V'$ denote the space of pairs $(h,Y)$ such that $(h,Y,v)\in\mathcal W'$ for some function $v$. Then it is easy to observe that 
\be\label{CV2}
(\d h)^T\in{\rm Im}d^T,~Y\in T_0\ \ \mbox{ for all }(h,Y)\in\cV',
\ee
and the subspace $S_1$ in \eqref{DT2} can be equivalently written as 
\be\label{S1}
S_1=\{(h,p)\in T\cB|_{(g,K)}:~h=h,~ p=E_2(h)+Q(Y)\text{ for some }(h,Y)\in\cV'\}.
\ee
Via this formula, we can construct a new operator
\begin{equation}
\begin{split}
\hat\Psi:&\mathcal V'\to\mathcal T\\
\hat\Psi(h,Y)=\cL\big(h,E_2(h)+Q(Y)\big)
&=\cL\big(h,\delta^*Y+(\delta Y)g\big)+\cL(0,E_2(h))
=\Psi(h,Y)+O_1(h),
\end{split}
\end{equation}
where the formula for $\Psi(h,Y)$ is the same as in equation \eqref{PS1}-\eqref{PS2}, and $O_1(h)$ denotes a term only involving zero and first order derivatives of $h$. 
Define an ``Einstein-type'' operator $\mathcal E$ on the space $\mathcal W'$ similar to the operator $\cP$ in \eqref{CP}:
\begin{equation}\label{CE}
\begin{split}
&\cE:\cW'\to S^{m-2,\a}_{\d+2}(M)\times\cT\\
\cE(&h,Y,v)=(~\cE_0(h,v),~\hat\Psi(h,Y)~).
\end{split}
\end{equation}
Notice that the leading order part of $\cE$ is the same as that of $L$ in \eqref{BVPL} and the domain space $\cW'$ consists of exactly kernel elements of the operator $B$ in \eqref{BVPB} by construction. It follows from the ellipticity of $P=(L,B)$ that $\mathcal E$ is a Fredholm map and hence its range has finite codimension. Let $\pi_2$ be the projection to the second component in \eqref{CE}. Obviously the image of $\pi_2\circ\mathcal E$ is equal to the range $\cL(S_1)$. Therefore, $\cL(S_1)$ also has finite codimension in $\cT$, as stated in Proposition 2.3.

Lastly using the map $\mathcal E$ defined above we give the proof of equation \eqref{KS1}.
\begin{lemma}The subspace $S_1\cap \cL^{-1}(0)$ splits in $S_1$, i.e.  
\begin{equation*}
\begin{split}
S_1= S\oplus \big(S_1\cap \cL^{-1}(0)\big)
\end{split}
\end{equation*}
for some closed subspace $S\subset S_1$.
\end{lemma}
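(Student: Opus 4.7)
The plan is to produce a closed subspace $S\subset S_1$ complementing $S_1\cap \cL^{-1}(0)$ by constructing a bounded right inverse of the continuous surjection $\cL|_{S_1}:S_1\to \cL(S_1)$; one may then take $S$ to be the image of this right inverse. Via the parametrization $(h,Y)\mapsto (h,E_2(h)+Q(Y))$ of $S_1$ by $\cV'$ and the identity $\cL(h,E_2(h)+Q(Y))=\hat\Psi(h,Y)$, this reduces to producing a bounded right inverse of $\hat\Psi:\cV'\to \cL(S_1)$ and then pushing it forward to $S_1$.

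To construct such a right inverse I would invoke the Fredholm property of $\cE:\cW'\to S^{m-2,\alpha}_{\delta+2}(M)\times\cT$ from \eqref{CE}, which yields the decompositions $\cW'=\cC\oplus\ker\cE$ and $S^{m-2,\alpha}_{\delta+2}(M)\times\cT=\Image\cE\oplus K$ with both summands $\ker\cE$ and $K$ finite-dimensional, together with a bounded inverse $\cE|_\cC^{-1}:\Image\cE\to\cC$ given by the open mapping theorem. Since $\pi_2(\Image\cE)=\cL(S_1)$ and $\hat\Psi\circ \pi_{\cV'}=\pi_2\circ \cE$, the problem reduces to constructing a bounded linear section $\rho:\cL(S_1)\to\Image\cE$ of the restricted projection $\pi_2|_{\Image\cE}$; then $R:=\pi_{\cV'}\circ\cE|_\cC^{-1}\circ\rho$ is the desired bounded right inverse of $\hat\Psi$.

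Once $R$ is in hand, I would define $\tilde R:\cL(S_1)\to S_1$ by $\tilde R(\alpha)=(h,E_2(h)+Q(Y))$ for $R(\alpha)=(h,Y)$, and set $S:=\tilde R(\cL(S_1))$. The identity $\cL\circ\tilde R=\mathrm{id}_{\cL(S_1)}$ together with boundedness of $\tilde R$ makes it a topological isomorphism onto $S$; in particular $S$ is closed, and the splitting $S_1=S\oplus\ker(\cL|_{S_1})=S\oplus(S_1\cap \cL^{-1}(0))$ follows from the standard Banach-space fact that a bounded right inverse of a continuous surjection splits its kernel.

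The main obstacle is the construction of the bounded section $\rho:\cL(S_1)\to\Image\cE$. The kernel of $\pi_2|_{\Image\cE}$, which consists of pairs $(\beta,0)\in\Image\cE$ with $\beta=\cE_0(h,v)$ for some $(h,Y,v)\in\cW'$ satisfying $\hat\Psi(h,Y)=0$, is infinite-dimensional, so the abstract finite-codimension splitting used in the surjectivity proof of \S2.1 does not transfer. Instead, $\rho$ must be extracted from the analytic structure of $\cE$: the overdetermined ellipticity of the principal $v$-dependence $v\mapsto -4D^2v+2(\Delta v)g$ inside $\cE_0(h,v)$ permits a canonical continuous selection of auxiliary data $\beta(\alpha)$ depending boundedly and linearly on $\alpha\in\cL(S_1)$, which then defines the section $\rho(\alpha):=(\beta(\alpha),\alpha)\in\Image\cE$. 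This analytic step is the only nontrivial ingredient beyond the abstract Fredholm framework already in place.
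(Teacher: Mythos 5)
Your overall reduction is sound and, modulo the missing step, equivalent to what must be proved: a bounded right inverse of $\hat\Psi:\cV'\to\cL(S_1)$, pushed forward by $T(h,Y)=(h,E_2(h)+Q(Y))$, does give a closed complement $S$ of $S_1\cap\cL^{-1}(0)$ in $S_1$, and your identities $\cL\circ T=\hat\Psi$ and $\pi_2\circ\cE=\hat\Psi\circ\pi_{\cV'}$ are correct. The problem is that the existence of the bounded section $\rho:\cL(S_1)\to{\rm Im}\,\cE$ of $\pi_2|_{{\rm Im}\cE}$ is exactly equivalent to the splitting you are trying to prove (its kernel $\{(\beta,0)\in{\rm Im}\,\cE\}$ is infinite dimensional, as you note, and a bounded linear selection of the affine solution sets $\{\beta:(\beta,\alpha)\in{\rm Im}\,\cE\}$ is nothing but a splitting of that kernel). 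Your proposed resolution -- that ``overdetermined ellipticity of $v\mapsto -4D^2v+2(\Delta v)g$ permits a canonical continuous selection of $\beta(\alpha)$'' -- is not an argument: $\cE_0$ couples $h$ and $v$, to evaluate any candidate $\beta(\alpha)=\cE_0(h,v)$ you would already need a bounded choice of preimage $(h,Y,v)$ with $\hat\Psi(h,Y)=\alpha$ (which is the right inverse you are trying to build, so the scheme is circular), and in H\"older-type Banach spaces there is no canonical projection that would make such a selection automatic. So the heart of the lemma is left unproved.

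For comparison, the paper closes precisely this gap by a different mechanism: it uses $W_2=\cE^{-1}(0,*)$ (the solutions of $\cE_0(h,v)=0$) as an approximate complement of $W_1=\cE^{-1}(*,0)$ inside $\cW'$. Fredholmness of $\cE$ enters twice: the quotient $\cW'/(W_1+W_2)$ injects into $\bigl(S^{m-2,\a}_{\d+2}(M)\times\cT\bigr)/{\rm Im}\,\cE$, hence is finite dimensional, and $W_1\cap W_2=\cE^{-1}(0,0)$ is finite dimensional; this yields $\cW'=W_1\oplus\w W_2\oplus W_3$ with $W_3$ finite dimensional, which projects to a splitting $\cV'=V_1\oplus V_3$ with $V_1=\hat\Psi^{-1}(0)$ (closedness of $\pi(\w W_2)$ and of $T(V_3)$ is checked via the bounded bijectivity of $\cE|_{\w W_2}$ and $\hat\Psi|_{V_3}$ and the open mapping theorem). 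If you want to keep your right-inverse formulation, the cleanest completion is to prove this splitting $\cV'=V_1\oplus V_3$ first and then take your right inverse to be $(\hat\Psi|_{V_3})^{-1}$; the analytic ingredient you need is the finite-dimensionality bookkeeping with $W_1$, $W_2$ and $\ker\cE$, not a pointwise ``canonical selection'' of $\beta(\alpha)$.
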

\begin{proof} The following proof is based on the equivalent description \eqref{S1} for the space $S_1$.
Notice that \eqref{S1} relates $S_1$ with the space $\cV'$ and $\cW'$.
So we will first construct a splitting for $\cW'$ and $\cV'$, and then derive the splitting for $S_1$.
Let $W_1$ be the subspace of $\mathcal W'$ which consists of elements $(h,Y,v)$ such that $\hat\Psi(h,Y)=0$, i.e. $ W_1=\mathcal E^{-1}(*,0)$.
Similarly, define $W_2=\mathcal E^{-1}(0,*)$ as the space consisting of $(h,Y,v)$ such that $\cE_0(h,v)=0$. Then $W_1$ and $W_2$ are closed subspaces of $\mathcal W'$. Moreover, $(W_1+W_2)$ must be of finite codimension in $\mathcal W'$. In fact, we can construct a map 
\begin{equation*}
\begin{split}
\cF:\cW'/(&W_1+W_2)\to \big(S^{m-2,\a}_{\d+2}(M)\times\cT\big)/{\rm Im}\cE\\
&\cF([h,Y,v])=[\cE_0(h,v),0]
\end{split}
\end{equation*}
where $[h,Y,v]$ denotes an equivalence class in $\cW'/(W_1+W_2)$ and $[\cE_0(h,v),0]$ an equivalence class in $\big(S^{m-2,\a}_{\d+2}(M)\times\cT\big)/{\rm Im}\cE$. It is easy to verify $\cF$ is well-defined and injective. 
Since the range of the Fredholm map $\mathcal E$ has finite codimension, the quotient $\big(S^{m-2,\a}_{\d+2}(M)\times\cT\big)/{\rm Im}\cE$ and hence $\cW'/(W_1+W_2)$ must be of finite dimension. Let $W_3$ be a complementary subspace, i.e. $$\mathcal W'=(W_1+W_2)\oplus W_3.$$
Notice that $W_1\cap W_2=\mathcal E^{-1}(0,0)$ is of finite dimension and thus it splits in $W_2$,
i.e. $W_2=(W_1\cap W_2)\oplus \w W_2$ for some closed subspace $\w W_2$. 
This further implies that 
\begin{equation}\label{DW}
\mathcal W'=W_1\oplus \w W_2\oplus W_3.
\end{equation}
Now consider the previously defined space $\mathcal V'$.
We will show that $V_1=\hat\Psi^{-1}(0)$ splits in $\mathcal V'$. Let $\pi$ be the projection
$\pi:\mathcal W'\to\mathcal V',\ 
\pi(h,Y,v)=(h,Y)$.
Obviously, $V_1=\pi(W_1)$ and 
\begin{equation}\label{DV1}
\mathcal V'=V_1+\pi(\w W_2)+\pi(W_3).
\end{equation}
Let $V_2=\pi(\w W_2)$. It follows from the definition of $\w W_2$ that $V_1\cap V_2=\{0\}$. Moreover, $V_2$ is also closed. In fact, given a Cauchy sequence $\{(h_i,Y_i)\}~(i=1,2,3...)$ in $V_2$, there is a sequence $\{v_i\}$ such that $(h_i,Y_i,v_i)\in\w W_2$. The sequence of their images $\mathcal E(h_i,Y_i,v_i)=(0,\hat\Psi(h_i,Y_i))$ must also converge since $\hat\Psi$ is a bounded operator. 
Observe $\mathcal E|_{\w W_2}:~\w W_2\to \{(0,*)\}\cap\text{Im}\mathcal E$ is a bijective and bounded linear operator. It then follows that $(h_i,Y_i,v_i)$ must converge to some element $( h_0,  Y_0, v_0)$ in $\w W_2$; and consequently $(h_i,Y_i)$ converges to $( h_0, Y_0)$ in $V_2$.
Thus equation \eqref{DV1} can be rewritten as
$\mathcal V'=(V_1\oplus V_2)+\pi(W_3)$.
In this decomposition $V_1\oplus V_2$ must be of finite codimension, since $W_3$ has finite dimension. Thus there exist a closed subspace $V_3$ 
so that 
\begin{equation}\label{DV3}
\mathcal V'=V_1\oplus V_3.
\end{equation}
Finally, combining the splitting \eqref{DV3} for $\cV'$ and description \eqref{S1} for $S_1$, we can finish the proof of the lemma. Define the map 
\begin{equation*}
\begin{split}
&T:\mathcal V'\to S_1\\
T(h,Y)&=(h,E_2(h)+Q(Y)).
\end{split}
\end{equation*}
Obviously $T$ is linear, bounded and surjective with kernel given by $\text{Ker} T=\{(0,Y)\in\mathcal{V'}:Q(Y)=0\}$. Since $\hat\Psi(h,Y)=\cL(T(h,Y))$, we have $T(V_1)=\cL^{-1}(0)\cap S_1.$ Thus
$$S_1=(\cL^{-1}(0)\cap S_1)+T(V_3).$$
According to \eqref{DV3} we see that $(\cL^{-1}(0)\cap S_1)\cap T(V_3)=\{0\}$. So \eqref{KS1} will hold if $T(V_3)$ is closed. Suppose $\{\big(h_i,p_i\big)\}~(i=1,2,3...)$ is a Cauchy sequence in $T(V_3)$, so $p_i= E_2(h_i)+Q(Y_i)$. Then $\hat\Psi(h_i,Y_i)=\cL\big(h_i,E_2(h_i)+Q(Y_i)\big)$ must converge in $\cL(S_1)$ since $\cL$ is bounded. 
Then $(h_i,Y_i)$ must converge to some $(h_0,Y_0)$ in $V_3$ because the map $\hat\Psi|_{V_3}:V_3\to \text{Im}\hat\Psi=\cL(S_1)$ is bounded, linear and bijective. Therefore $\big(h_i, p_i\big)$ must converge to $\big( h_0, E( h_0)+Q( Y_0)\big)$ in $T(V_3)$. This completes the proof.
\end{proof}

\medskip

Summarizing all the previous results, we can conclude that level sets of the constraint map admit Banach manifold structure.
\begin{theorem}
Given fixed Bartnik data $(\gamma,l,k,\tau)\in [Met^{m,\a}\times C^{m-1,\a}\times C^{m-1,\a}\times (\wedge_1)^{m-1,\a}](\p M)$ on $\p M$ and $(u,Z)\in \cT$, the space $\mathcal C_B(u,Z)$ of initial data sets satisfying the constraint equations with fixed boundary data
\begin{equation*}
\begin{split}
\mathcal C_B(u,Z)=\{(g,K)\in[Met_{\delta}^{m,\alpha}\times S^{m-1,\alpha}_{\delta+1}](M):&~\Phi(g,K)=(u,Z)\text{ on }M\\
&(g^T,H,tr_{\partial M}K,K(\mathbf n)^T)=(\gamma,l,k,\tau)\text{ on }\partial M.\}
\end{split}
\end{equation*}
is an infinite dimensional smooth Banach manifold. 
\end{theorem}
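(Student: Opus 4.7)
The plan is to apply the implicit function theorem for smooth maps of Banach manifolds directly to the constraint map $\Phi:\cB\to\cT$ defined in \eqref{consm}. Since $\cB$ has already been verified to be a smooth closed Banach submanifold of $[Met^{m,\alpha}_{\d}\times S^{m-1,\alpha}_{\d+1}](M)$, and by definition $\cC_B(u,Z)=\Phi^{-1}(u,Z)$, it suffices to establish that $\Phi$ is a smooth map of Banach manifolds and that it is a submersion at every point of $\Phi^{-1}(u,Z)$. Once both are in hand, $\cC_B(u,Z)$ inherits the structure of a smooth Banach submanifold of $\cB$ whose tangent space at $(g,K)$ is $\text{Ker}\,\cL$.

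First I would verify smoothness of $\Phi$. Both components $\Phi_0$ and $\Phi_i$ are polynomial in $g^{-1}$, $K$, $\nabla K$, and the components of $Ric_g$, multiplied by $\sqrt g$. The assignments $g\mapsto g^{-1}$ and $g\mapsto\sqrt g$ are smooth on the open subset of Riemannian metrics in $Met^{m,\alpha}_{\d}(M)$, and multiplication of weighted H\"older tensor fields behaves compatibly with the weight conventions: two $g$-derivatives of a metric decaying at rate $\d$ produce a tensor decaying at rate $\d+2$, matching the scalar curvature contribution to $\Phi_0$, and a single derivative of $K$ (which decays at rate $\d+1$) produces a one-form at rate $\d+2$, matching the codomain $(\wedge_1)^{m-2,\alpha}_{\d+2}(M)$. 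Composition of these smooth pieces gives smoothness of $\Phi$ as a map between Banach manifolds, and the same argument applies to higher derivatives of $\Phi$.

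The submersion property is precisely the content of Propositions 2.1 and 2.2: at every $(g,K)\in\cC_B(u,Z)$, the linearization $\cL=D\Phi_{(g,K)}:T\cB|_{(g,K)}\to\cT$ is surjective, and its kernel admits a bounded closed complement in $T\cB|_{(g,K)}$. With these two hypotheses, the standard implicit function theorem for Banach manifolds produces, around each $(g,K)\in\cC_B(u,Z)$, a smooth local chart in which $\cC_B(u,Z)$ appears as the graph of a smooth map from an open neighborhood in $\text{Ker}\,\cL$ into a chosen closed complement. Infinite-dimensionality follows automatically: $\cL$ is a bounded surjection onto the infinite-dimensional codomain $\cT$ from $T\cB|_{(g,K)}$, and deformations $(h,p)$ compactly supported away from $\p M$ supply an infinite-dimensional family in $T\cB|_{(g,K)}$ on which $\cL$ has infinite-dimensional kernel, so $\text{Ker}\,\cL$ itself must be infinite-dimensional. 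The genuinely hard analytic steps -- surjectivity of $\cL$ in the presence of the fixed Bartnik boundary conditions, and splitting of its kernel -- have already been settled in \S2.1 and \S2.2, so once those are assumed the argument here is a routine invocation of the Banach implicit function theorem.
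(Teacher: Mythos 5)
Your proposal is correct and follows essentially the same route as the paper: the theorem is deduced by invoking the Banach implicit function theorem, with surjectivity of $\cL$ and splitting of its kernel supplied by the propositions of \S2.1 and \S2.2 (Propositions 2.2 and 2.4 in the paper's numbering). Your added remarks on smoothness of $\Phi$ in the weighted H\"older setting and on infinite-dimensionality of $\Ker\cL$ are sound elaborations of points the paper leaves implicit.
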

\begin{proof}
According to Proposition 2.2 and Proposition 2.4, the linearization $\cL=D\Phi|_{(g,K)}$ at any $(g,K)\in\Phi^{-1}(u,Z)$ is surjective and has splitting kernel. The theorem is a natural consequence of the implicit function theorem in Banach spaces.
\end{proof}
\subsection{Ellipticity of the ``Einstein-type" operator} In the last part of this section, we prove in detail that the operator $P$ constructed as \eqref{BVPL}-\eqref{BVPB} in the proof of surjectivity is elliptic. First observe that in \eqref{BVPL}-\eqref{BVPB} the vector field $Y$ is not coupled with $(h,v)$. So we can split $P$ as an operator $P_1=(L_1,B_1)$ acting on $Y$ given by
\begin{equation}\label{EY}
\begin{split}
&L_1:  T^{m,\alpha}_{\delta}(M)\to (\wedge_1)^{m-2,\alpha}_{\delta+2}(M)\\
&\quad\quad L_1(Y)=\delta\delta^*Y+d\delta Y\\
&B_1:  T^{m,\alpha}_{\delta}(M)\to [C^{m-1,\a}\times (\wedge_1)^{m-1,\alpha}](\p M)\\
&\quad\quad B_1(Y)=\big(
\tr^T[\delta^*Y+(\delta Y)g],~
\delta^*Y(\mathbf{n})^T
\big),
\end{split}
\end{equation}
and an operator $P_2=(L_2,B_2)$ acting on $(h,v)$ given by
\begin{equation}\label{EH}
\begin{split}
&\quad\quad L_2:[S^{m,\alpha}_{\delta}\times C^{m,\alpha}_{\delta}](M)\to [S^{m-2,\alpha}_{\delta+2}\times C^{m-2,\alpha}_{\delta+2}](M)\\
& \quad\quad\quad\quad L_2(h,v)=
\big(
\cE_0(h,v),~
\Delta \tr h+\delta\delta h
\big),\\
&B_2:[S^{m,\alpha}_{\delta}\times C^{m,\alpha}_{\delta}](M)\to [(\wedge_1)^{m-1,\alpha}\times S^{m,\alpha}\times C^{m,\a}](\p M)\\
&\quad\quad\quad\quad B_2(h,v)=\big(
\delta h-3d v,~
h^T,~
H'_{h}
\big).
\end{split}
\end{equation}
It is easy to verify the ellipticity of $P_1$ by applying the criterion given in \cite{ADN}. Here we give the detail.
According to \cite{ADN}, a general boundary value operator $P=(L,B)$ is elliptic
if the following two conditions hold:
\\(A) (properly elliptic condition): Let $L(\xi)$ denote the matrix of principal symbol of the interior operator $L$. Then its determinant $\ell(\xi)=\det L(\xi)$ has no nonzero real root at any point $x\in M$;
\\(B) (complementing boundary {condition}): Let $B(\xi)$ be the matrix of principal symbol of the boundary operator $B$ and $L^*(\xi)$ be the adjoint matrix of $L(\xi)$.
At any point $x\in\p M$, take $\xi=z\mu+\eta$ where $\eta$ denotes a non-zero 1-form tangential to the boundary $\p M$ and $\mu$ a unit 1-form normal to $\p M$. Then the rows of the product $B(z\mu+\eta)\cdot L^{\ast}(z\mu+\eta)$ are linearly independent modulo $\ell^+(z)$, where $\ell^+(z)=\prod_k(z-z_k)$ with $\{z_k\}$ being the roots of $\ell(z\mu+\eta)=0$ with positive imaginary parts.

At any point $x\in M$ we can choose the normal coordinates and express
\begin{align*}
&2(\d\d^*Y+d\d Y)= -\p_i\p_iY_j-\p_i\p_jY_i-2\p_j\p_iY_i+O_1(Y)=-\p_i\p_iY_j-3\p_i\p_jY_i+O_1(Y).
\end{align*}
Recall that $O_1(Y)$ denotes a term involving at most 1st order derivatives of $Y$. So the matrix of principal symbol of $L_1$ is given by
\begin{equation*}
L_1(\xi)=\frac{1}{2}
\begin{bmatrix}
|\xi|^2+3\xi_1\xi_1&3\xi_1\xi_2&3\xi_1\xi_3\\
3\xi_2\xi_1&|\xi|^2+3\xi_2\xi_2&3\xi_2\xi_3\\
3\xi_1\xi_3&3\xi_2\xi_3&|\xi|^2+3\xi_3\xi_3
\end{bmatrix},
\end{equation*}
where $|\xi|=\sqrt{\xi_1^2+\xi_2^2+\xi_3^2}$. Elementary calculation shows its determinant is 
$\ell_1(\xi)=\tfrac{1}{2}|\xi|^6$. Obviously it satisfies properly elliptic condition. 
The adjoint matrix is given by
\begin{equation*}
L^*_1(\xi)=\frac{1}{4}|\xi|^2
\begin{bmatrix}
|\xi|^2+3(\xi_2^2+\xi_3^2)&-3\xi_1\xi_2&-3\xi_1\xi_3\\
-3\xi_2\xi_1&|\xi|^2+3(\xi_1^2+\xi_3^2)&-3\xi_2\xi_3\\
-3\xi_1\xi_3&-3\xi_2\xi_3&|\xi|^2+3(\xi_1^2+\xi_2^2)
\end{bmatrix}.
\end{equation*}
At any point $x\in\p M$, again in the normal coordinates
\begin{align*}
&\tr^T[\d^*Y+(\d Y)g]=-2\p_1Y_1-\p_2Y_2-\p_3Y_3+O_0(Y),\\
&2\delta^*Y(\mathbf{n})^T_A=\p_1Y_A+\p_AY_1+O_0,\ \ A=2,3.
\end{align*}
Recall that we use the index $1$ to denote the normal direction to the boundary $\p M$ and indices $2,3$ to denote the directions tangential to $\p M$. 
Thus the matrix of principal symbol of $B_2$ is given by
\begin{equation*}
B_2(\xi)=i
\begin{bmatrix}
-2\xi_1&-\xi_2&-\xi_3\\
\xi_2&\xi_1&0\\
\xi_3&0&\xi_1
\end{bmatrix}.
\end{equation*}
Thus we have 
\begin{equation*}
\begin{split}
B_1(\xi)L^*_1(\xi)=\frac{i}{4}|\xi|^2
\begin{bmatrix}
-2|\xi|^2\xi_1-3\xi_1(\xi_2^2+\xi_3^2)&-|\xi|^2\xi_2+3\xi_1^2\xi_2&-|\xi|^2\xi_3+3\xi_1^2\xi_3\\
2\xi_2(2\xi_2^2+2\xi_3^2-\xi_1^2)&2\xi_1(-\xi_2^2+2\xi_1^2+2\xi_3^2)&-6\xi_1\xi_2\xi_3\\
2\xi_3(2\xi_2^2+2\xi_3^2-\xi_1^2)&-6\xi_1\xi_2\xi_3&2\xi_1(-\xi_3^2+2\xi_1^2+2\xi_2^2)
\end{bmatrix}.
\end{split}
\end{equation*}
Since $\ell_1(z\mu+\eta)=(z^2+|\eta|^2)^3$, the root with positive imaginary part for $\ell_1(z\mu+\eta)=0$ is $z=i|\eta|$ of multiplicity 3.
So the complementing boundary condition will be true if there is no nonzero complex vector $C$ solving $C\cdot B_1(z\mu+\eta)\cdot L_1^*(z\mu+\eta)=0\mbox{ mod }(z-i|\eta|)^3$.
Denote the matrix on the right side of the expression above as $\hat B(\xi)=\tfrac{4}{i|\xi|^2}B_1(\xi)L_1^*(\xi)$. Then it suffices to show there is no nontrivial solution for 
$C\cdot \hat B(z\mu+\eta)=0\mbox{ mod }(z-i|\eta|)^2$.

It is easy to verify that $\det \hat B(z\mu+\eta)=0\mbox{ mod } (z-i|\eta|)$, which means that the rows of $ \hat B(z\mu+\eta)$ are linearly dependent mod $(z-i|\eta|)$. Thus we need to take the derivative $\hat B'(z\mu+\eta)$ of $\hat B(z\mu+\eta)$ with respect to $z$ and show $C\cdot \hat B'(z\mu+\eta)= 0\mbox{ mod }(z-i|\eta|)$ has no nontrivial solution.
This is equivalent to $\det \hat B'(z\mu+\eta)|_{z=i|\eta|}\neq 0$. Let $\xi_1=z$ and $\xi_2=\eta_2,\xi_3=\eta_3$ in $\hat B$ with $(\eta_1,\eta_2)\neq 0$:
\begin{equation*}
\begin{split}
\hat B(z\mu+\eta)=
\begin{bmatrix}
-2z^3-5z|\eta|^2&-(z^2+|\eta|^2)\eta_2+3z^2\eta_2&-(z^2+|\eta|^2)\eta_3+3z^2\eta_3\\
2\eta_2(2|\eta|^2-z^2)&2z(-\eta_2^2+2z^2+2\eta_3^2)&-6z\eta_2\eta_3\\
2\eta_3(2|\eta|^2-z^2)&-6z\eta_2\eta_3&2z(-\eta_3^2+2z^2+2\eta_2^2)
\end{bmatrix}.
\end{split}
\end{equation*}
So its derivative is given by
\begin{equation*}
\begin{split}
\hat B'(z\mu+\eta)=
\begin{bmatrix}
-6z^2-5|\eta|^2&4z\eta_2&4z\eta_3\\
-4z\eta_2&-2\eta_2^2+12z^2+4\eta_3^2&-6\eta_2\eta_3\\
-4z\eta_3&-6\eta_2\eta_3&-2\eta_3^2+12z^2+4\eta_2^2
\end{bmatrix}.
\end{split}
\end{equation*}
Plug in $z=i|\eta|$, $z^2=-|\eta|^2$ to obtain
$\det \hat B'(i|\eta|\mu+\eta)=240|\eta|^6$.
Obviously it is never zero if $\eta\neq 0$. This completes the proof that $P_1$ in \eqref{EY} is an elliptic operator.

\medskip

Next we prove ellipticity for the operator $P_2$ given in \eqref{EH}. Recall that this operator is constructed by combining the linearized Einstein tensor with gauge terms. So we can apply the results and methods in \cite{Az2} on the ellipticity of stationary Einstein field equations.
It is shown in \cite{Az2} that the following operator 
\begin{equation}\label{E0}
\begin{split}
&L_0:[S^{m,\alpha}_{\delta}\times C^{m,\alpha}_{\delta}](M)\to [S^{m-2,\alpha}_{\delta+2}\times C^{m-2,\alpha}_{\delta+2}](M)\\
& \quad\quad\quad L_0( h, v)=
\big(
\tfrac{1}{2}D^*D h,~
\Delta v 
\big)\\
&B_0:[S^{m,\alpha}_{\delta}\times C^{m,\alpha}_{\delta}](M)\to [(\wedge_1)^{m-1,\alpha}\times S^{m,\alpha}\times C^{m,\a}](\p M)\\
&\quad\quad\quad B_0(h,v)=\big(
\beta h,~
h^T-2vg^T,~
H'_h-2\mathbf n(v)
\big)
\end{split}
\end{equation}
is elliptic. Here the first component of $L_0$ is the leading order term of $Ric'_h+\delta^{\ast}\beta h$, which is the linearized Ricci tensor with a gauge term. Note that the operator above is obtained from a conformal transformation of a boundary value problem of Einstein field equations in the projection formalism of stationary spacetimes (cf. section 2 of \cite{Az2}).
So we first define the conformal transformation
\begin{align*}
Q:[S^{m,\alpha}_{\d}\times C^{m,\alpha}_{\d}](M)\to [S^{m,\alpha}_{\d}\times C^{m,\alpha}_{\d}](M),\quad
Q(h, v)=( h-2vg, v).
\end{align*}
Notice that $Q$ is a linear isomorphism. So operator $P_2$ is elliptic if and only if the operator $P_3=P_2\circ Q$ is elliptic.  Composing $Q$ with the operator in \eqref{EH}, we obtain $P_3=(L_3,B_3)$ given by
\begin{equation*}
\begin{split}
&\quad\quad L_3:[S^{m,\alpha}_{\delta}\times C^{m,\alpha}_{\delta}](M)\to [S^{m-2,\alpha}_{\delta+2}\times C^{m-2,\alpha}_{\delta+2}](M)\\
& L_3(h, v)=
\big(
{\bf E}'_{ h}+\delta^*\delta  h-(\delta\delta  h)g+(\D v)g-D^2 v,~
\Delta \tr h+\delta\delta  h-4\D v
\big),\\
&B_3:[S^{m,\alpha}_{\delta}\times C^{m,\alpha}_{\delta}](M)\to [(\wedge_1)^{m-1,\alpha}\times S^{m,\alpha}\times C^{m,\a}](\p M)\\
&\quad\quad\quad B_3( h, v)=\big(
\delta h-d v,~
 h^T-2 vg^T,~
H'_{ h}-2\mathbf n( v)+v H_g
\big).
\end{split}
\end{equation*}
Here we throw away the terms involving only lower order ($\leq 1$) derivatives of $(h,v)$. We use ${\bf E}'_{h}$ to denote leading order part of the linearized Einstein tensor $Ein'_{h}$, i.e. 
$${\bf E}'_{h}=\tfrac{1}{2}D^*D h-\d^*\b h-\tfrac{1}{2}(\D \tr h+\d\d h)g.$$
Take trace of the first term in $L_3(h,v)$, multiply it by 2, add it to the second term of $L_3(h,v)$. Then we obtain the following operator $\w P=(\w L,\w B)$ which behaves the same as $P_3$ above regarding to ellipticity:
\begin{equation}\label{E1}
\begin{split}
&\quad\quad \w L:[S^{m,\alpha}_{\delta}\times C^{m,\alpha}_{\delta}](M)\to [S^{m-2,\alpha}_{\delta+2}\times C^{m-2,\alpha}_{\delta+2}](M)\\
&\w L( h, v)=
\big(
{\bf E}'_{ h}+\delta^*\delta  h-(\delta\delta  h)g+(\D v)g-D^2  v,~
4\D v-8\delta\delta  h
\big).\\
&\w B:[S^{m,\alpha}_{\delta}\times C^{m,\alpha}_{\delta}](M)\to [(\wedge_1)^{m-1,\alpha}\times S^{m,\alpha}\times C^{m,\a}](\p M)\\
&\quad\quad\quad \w B( h, v)=\big(
\delta h-d  v,~
 h^T-2 vg^T,~
H'_{ h}-2\mathbf n( v)
\big).
\end{split}
\end{equation}
The formal adjoint of $\w P$ is given by $\bar P=(\bar L,\bar B)$
\begin{equation}\label{E2}
\begin{split}
&\quad\quad \bar L:[S^{m,\alpha}_{\delta}\times C^{m,\alpha}_{\delta}](M)\to [S^{m-2,\alpha}_{\delta+2}\times C^{m-2,\alpha}_{\delta+2}](M)\\
& \bar L( h, v)=
\big(
{\bf E}'_h+\delta^*\delta h-D^2\tr h-8D^2v,~
4\Delta_{g} v -\delta\delta h+\Delta \tr h 
\big),\\
&\quad\quad\bar B:[S^{m,\alpha}_{\delta}\times C^{m,\alpha}_{\delta}](M)\to [(\wedge_1)^{m-1,\alpha}\times S^{m,\alpha}\times C^{m,\a}](\p M)\\
&\quad\quad \bar B( h, v)=\big(
\delta h-d\tr h-8dv,~
h^T+2vg^T,~
H'_h+2\mathbf n(v)
\big).
\end{split}
\end{equation}
It is straightforward to verify the adjointness via direct integration by parts. In the following we prove $\bar P$ and $\w P$ are adjoint operators using variations of the Einstein-Hilbert functional. Consider the functional 
$$I(g)=\int_M R_g~d\vol_g-2\int_{\p M}H_g~d\vol_{g^T}-16\pi m(g).$$
Here $R_g$ denotes the scalar curvature of $g$; and $m(g)$ denotes the ADM energy given by
$$
m(g)=\tfrac{1}{16\pi}\lim_{R\to\infty}\int_{S_R}(\p_i g_{ij}-\p_jg_{ii})\nu^j d\sigma
$$
where $S_R$ denotes the sphere $\{ r=R\}$ in $M$, $\nu$ denotes the unit normal to the sphere pointing to infinity with respect to the flat metric $\mathring g$ on $M$; and $d\sigma$ is the volume form on $S_R$ induced by $\mathring g$. Recall that both $ r$ and $\mathring g$ are obtained from the chart $M\cong\mathbb R^3\setminus B$. It is proved by Bartnik (cf.\cite{B2}) that the ADM energy is independent of the choice of this chart as long as $g$ is asymptotically flat with the decay rate $\delta>\tfrac{1}{2}$. In the functional above, the energy term is used to balance the boundary terms at infinity when we consider the variation of $I(g)$.

Given a metric $g\in Met_\d^{m,\alpha}(M)$ and deformations $h,k\in S^{m,\alpha}_\d(M)$, take a two-parameter family of metrics $g(t,s)=g+th+sk$.
Then the first variation of $I$ is given by (cf. for example \cite{AK})
\begin{equation*}
\begin{split}
I'_g(h)=\tfrac{\p}{\p t}|_{t=0,s=0}I(g(t,s))
&=\int_M -\<Ein_g,h\>+\int_{\p M}\<-H_gg^T+A_g,h^T\>.\\
\end{split}
\end{equation*}
Here and in the following we omit the volume forms $d\vol_g$ and $d\vol_{g^T}$. In the above, $A_g$ denotes the second fundamental form of the boundary $\p M\subset (M,g)$, i.e. $A_g(X,Y)=g(\nabla_X{\bf n},Y)$ for $X,Y\in T\p M$. In the following we use $A'_k$ to denote its the variation at $g$ with respect to deformation $k$. Basic calculation yields
$$A'_k=\tfrac{1}{2}\nabla_{\bf n}h+\d^*(h({\bf n})^T)+O_0(k).$$
Take the second variation of $I$:
\begin{equation*}
\begin{split}
I''_g(h, k)&=\tfrac{\p^2}{\p s\p t}|_{t=0,s=0}I(g(t,s))\\
&=\int_M -\<Ein'_{ k},h\>+\<Ein_g,h\circ k\>-\tfrac{1}{2}\tr k\<Ein_g,h\>\\
&+\int_{\p M}\<-H'_{ k}g^T-H_g k^T+A'_{ k},h^T\>-\<-H_gg^T+A_g,h^T\circ k^T\>+\tfrac{1}{2}\tr^T k\<-H_gg^T+A_g,h^T\>,
\end{split}
\end{equation*}
where $h\circ k=h_{ik} k^{k}_j+h_{jk} k^{k}_i$.
Repeat the calculation above to compute $I''(k,h)=\tfrac{\p^2}{\p t\p s}|_{t=0,s=0}I(g(t,s))$. Then by symmetry of the second variation we obtain
\begin{equation*}
\begin{split}
&\int_M -\<Ein'_{ k},h\>-\tfrac{1}{2}\tr k\<Ein_g,h\>+\int_{\p M}\<-H'_{ k}g^T+A'_{ k},h^T\>+\tfrac{1}{2}\tr^T k\<A_g,h^T\>\\
=&\int_M -\<Ein'_{h}, k\>-\tfrac{1}{2}\tr h\<Ein_g, k\>+\int_{\p M}\<-H'_{ h}g^T+A'_{h}, k^T\>+\tfrac{1}{2}\tr^Th\<A_g, k^T\>.
\end{split}
\end{equation*}
Now assume that $(h,v),(k,w)\in [S^{m,\a}_{\d}\times C^{m,\a}_{\d}](M)$ are two pairs of deformations  such that $\w B( k, w)=0$ and $\bar B(h,v)=0$.
Since $\w B_{ k}( k, w)=0$, we have $ k^T=2wg^T,~\tr^T k=4w,$ and $H'_{ k}=2{\bf n}( w)$ on $\p M$. Similarly, based on $\bar B(h,v)=0$ we have $ h^T=-2vg^T,~\tr^Th=-4v$ and $H'_h=-2{\bf n}(v)$ on $\p M$. We can plug these boundary conditions into the integral identity above. In particular, since $H_g=\tr^TA_g$ we have $H'_k=\tr^TA'_k-\<A_g,k^T\>$. So the term $\<A'_k,h^T\>$ can be simply computed as $\<A'_k,h^T\>=-2v\tr^TA'_k=-2vH'_k-2v\<A_g,k^T\>=-4v{\bf n}(w)-4vwH_g$. The same can be applied for the term $\<A'_h,k\>$. After simplification we obtain 
\begin{equation*}
\begin{split}
\int_M \<{Ein}'_{ k},h\>-\tfrac{1}{2}\tr h\<Ein_g, k\>=\int_M \<{Ein}'_{h}, k\>-\tfrac{1}{2}\tr k\<Ein_g,h\>+\int_{\p M}4[ v{\bf n}( w)-w {\bf n}(v)].
\end{split}
\end{equation*}
Note the integrand on the left side can be simplified as 
$\<{Ein}'_{ k}-\tfrac{1}{2}\<Ric_g, k\>g,h\>-\tfrac{1}{4}R_g\tr h\tr k$;
and $\<{Ein}'_{ k}-\tfrac{1}{2}\<Ric_g, k\>g,h\>
=\<Ric'_k-\tfrac{1}{2}(\D\tr k+\d\d k)g,h\>=\<{\bf E}'_k,h\>+O_0(k,h)$, where $O_0(k,h)$ involves only zero order derivatives of $k,h$ and is symmetric in $k,h$. We can do the same modification for the bulk integral on the right side.
Therefore the equation above can be further simplified as
\begin{equation}\label{ibp1}
\begin{split}
\int_M \<{\bf E}'_{ k},h\>= \int_M\<{\bf E}'_{h}, k\>+\int_{\p M}4[ v{\bf n}( w)-w {\bf n}(v)].
\end{split}
\end{equation}
Basic calculation of integration by parts on the remaining terms in $\w L(k,w)$ and $\bar L(h,v)$ yields
\begin{equation*}
\begin{split}
&\int_M\<\delta^*\delta  k-(\delta\delta  k)g+(\D w)g-D^2  w,h\>+\<4\D w-8\delta\delta  k,v\>\\
=&\int_M\<\delta^*\delta h-D^2 \tr h-8D^2 v,k\>+\<4\D v-\delta\delta h+\D(\tr h), w\>+\int_{\p M}{\bf B}[( k, w),(h,v)]
\end{split}
\end{equation*}
where in the boundary integral $\bf B$ is a bilinear form given by
\begin{equation*}
\begin{split}
{\bf B}[( k, w),(h,v)]=&-h(\d  k,{\bf n})+ k(\d h,{\bf n})- k({\bf n},d(\tr h))-(\tr h)\d k({\bf n})+(\tr h){\bf n}( w)- w{\bf n}(\tr h)\\
&+h({\bf n},d w)+ w\d h({\bf n})+4v{\bf n}( w)-4 w{\bf n}(v)-8 k({\bf n},d v)-8 v\d  k({\bf n}).
\end{split}
\end{equation*}
Since $\w B( k, w)=0$ and $\bar B(h,v)=0$, we have $\d k=d w$ and $\d h=d\tr h+8dv$ on $\p M$. Plugging these equalities into the expression above we obtain ${\bf B}[( k, w),(h,v)]=4[ -v{\bf n}( w)+w{\bf n}( v)]$. Combining this with \eqref{ibp1} we obtain that for all deformations $( k, w),(h,v)\in [S^{m,\a}_{\d}\times C^{m,\a}_{\d}](M)$ such that $\w B( k, w)=0$ and $\bar B(h,v)=0$:
\begin{equation*}
\int_M\<\w L( k, w),(h,v)\>=\int_M\<\bar L(h,v),(k,w)\>,
\end{equation*}
which justifies the adjointness between $\w P$ and $\bar P$.
\medskip

Now to prove that \eqref{EH} is elliptic it suffices to prove that both the operator $\w P$ and its adjoint operator $\bar P$ admit a uniform elliptic estimate (c.f.\cite{H,T}). In the following we apply the method in \cite{AK} to prove the elliptic estimate for $\w P$:
\begin{equation}\label{EP}
\begin{split}
||(h,v)||_{C^{m,\a}(M)}\leq C(||\w L(h,v)||_{C^{m-2,\a}(M)}+\textstyle\sum_i||\w B^{(i)}(h,v)||_{C^{m-k_i,\a}(\p M)}+||(h,v)||_{C^0(M)}),
\end{split}
\end{equation}
where $\w B^{(i)}(h,v)$ denotes the $i$th $(i=1,2,3)$ component of $\w B(h,v)$ expressed in \eqref{E1}; and the order $k_i$ equals to highest order of derivatives involved in the component.
Note that here and in the estimates to follow, it is sufficient to consider $M$ as a compact manifold with nonempty boundary. In fact, if the elliptic estimate above holds on compact manifold for both $\w P$ and $\bar P$, then according to \cite{H} the operator $\w P$ must be elliptic in the sense that $\w P$ has both left and right parametrix. Then according to \cite{B2,LM} it follows that the operator is also elliptic when defined on space of tensor fields with decay rate $1>\d>1/2$ on the noncompact manifold. 

Since the matrix of principal symbol of the interior operator $\w L$ in \eqref{E1} is very complicated to analyze directly, we first pair the boundary operator $\w B$ in \eqref{E2} with the simple operator \eqref{E0} and observe that they form an elliptic boundary value problem.
In fact since the principal symbol of $L_0$ is simply a rescaling of the identity matrix, ellipticity of $(L_0,\w B)$ can be immediately verified by checking the matrix of principal symbol of $\w B(\xi)$
\begin{equation*}
\w B(\xi)=
\resizebox{.4\textwidth}{!}{$
\begin{bmatrix}
-i\xi_1&-i\xi_2&-i\xi_3&0&0&0&-i\xi_1\\
0&-i\xi_1&0&-i\xi_2&-i\xi_3&0&-i\xi_2\\
0&0&-i\xi_1&0&-i\xi_2&-i\xi_3&-i\xi_3\\
0&0&0&1&0&0&-2\\
0&0&0&0&1&0&0\\
0&0&0&0&0&1&-2\\
0&-i\xi_2&0-i\xi_3&\tfrac{1}{2}i\xi_1&0&\tfrac{1}{2}i\xi_1&-2i\xi_1
\end{bmatrix}
$}
\end{equation*}
 is a non-degenerate matrix when $\xi=i|\eta|\mu+\eta$ for $\eta\neq 0$.
Thus we have the following elliptic estimate 
\begin{equation*}
\begin{split}
||(h,v)||_{C^{m,\a}(M)}\leq C(||L_0(h,v)||_{C^{m-2,\a}(M)}+\textstyle\sum_i||\w B^{(i)}(h,v)||_{C^{m-k_i,\a}(\p M)}+||(h,v)||_{C^0(M)}).
\end{split}
\end{equation*}
The interior operator $\w L$ and $L_0$ differ by 
\begin{equation*}
\begin{split}
\w L(h,v)-L_0(h,v)=
\big(
-\tfrac{1}{2}(\D\tr h+3\delta\delta  h)g-\tfrac{1}{2}D^2\tr h+(\D v)g-D^2  v,~
3\D v-8\delta\delta  h
\big).
\end{split}
\end{equation*}
So estimate \eqref{EP} will hold if the $C^{m-2,\a}$-norm of $\delta\delta h$, and $C^{m,\a}$-norm of $\tr h, v$ can be controlled by $\w P(h,v)$ in the sense that
\begin{equation}\label{dh}
\begin{split}
||\d\d h||_{C^{m-2,\a}(M)}\leq C(||\w L(h,v)||_{C^{m-2,\a}(M)}+\textstyle\sum_i||\w B_i(h,v)||_{C^{m-k_i,\a}(\p M)}+||(h,v)||_{C^0(M)})
\end{split}
\end{equation}
and the same for $||\tr h||_{C^{m,\a}(M)}, ||v||_{C^{m,\a}(M)}$.
Taking the divergence of the first component of $\w L(h,v)$ in \eqref{E2} we get:
\begin{equation*}
\begin{split}
\d({\bf E}'_{ h})+\d[\delta^*\delta  h-(\delta\delta  h)g+(\D v)g-D^2  v]=(\d\delta^*+d\d)(\delta  h-dv)+O_1(h).
\end{split}
\end{equation*}
In the equality above, we use fact that ${\bf E}'_h=Ein'_h+O_0(h)$ and that the Bianchi identity $\d Ein_g =0$ implies $\d(Ein'_h)=\d'_hEin_g=O_1(h)$.
The expression above can be understood as the elliptic operator $\delta\delta^*+d\d$ acting on the term $(\delta h -d v)$ whose Dirichlet boundary data is given in the first component of $\w B(h,v)$. Thus the $C^{m-1,\alpha}_{\d+1}$-norm of $(\delta h -d v)$ is controlled by
$||\d h-dv||_{C^{m-1,\a}(M)}\leq C(||\d\w L(h,v)||_{C^{m-3,\a}(M)}+||\w B^{(1)}(h,v)||_{C^{m-1,\a}(\p M)}+||(h,v)||_{C^{m-1,\a}(M)})$. 
Note here the first term $||\d\w L(h,v)||_{C^{m-3,\a}(M)}\leq ||\w L(h,v)||_{C^{m-2,\a}(M)}$; and the last term $||h||_{C^{m-1,\a}(M)}$ can be ignored according to the interpolation inequality. 
As a consequence, the $C^{m-2,\a}$-norm of $\delta^*(\delta h-d v)$ is controlled by $\w P(h,v)$.

Since the first component of $\w L(h,v)$ is the summation of ${\bf E}'_h$, $\d^*(\d h-dv)$ and $\tr[\d^*(\d h-dv)]g$, it follows immediately that the $C^{m-2,\a}$-norm of ${\bf E}'_h$ is also controlled $\w P(h,v)$. 
In addition, comparing the trace $\tr[ \d^*(\d h-dv)]$ with the second component of $\w L(h,v)$, we see that the $C^{m-2,\a}$-norm of $\Delta v$ and $\delta\delta h$ are controlled. 
In addition $\D\tr h=-2\tr{\bf E}'_h-\d\d h$. So we obtain estimate \eqref{dh} of $\d\d h$ and similar estimates for $\Delta \tr h$ and $\D v$.

It remains to control the $C^{m,\a}$-norm of $\tr h$ and $v$. Since we have obtained estimates for $\Delta \tr h$ and $\D v$, it suffices to prove certain boundary data of $\tr h, v$ has well-controlled norm.
To obtain such boundary data, we first apply the Gauss equation at $\partial M$ given by $|A|^2-H^2+R_{g^T}=R_g-2Ric_g(\mathbf n,\mathbf n)=-2Ein_g(\mathbf n,\mathbf n)$.
Its linearization is 
$$(|A|^2-H^2+R_{g^T})'_h=-2Ein'_h(\mathbf n,\mathbf n)-4Ein_g(\mathbf n'_h,\mathbf n)$$
where $A'_h$, $H'_h$ and $\mathbf n'_h$ only involve 1st and 0th order behavior of $h$, which can be ignored according to the interpolation inequality. So we obtain
$$\Delta_{g^T}\tr^Th^T+\delta^T\delta^Th^T=(R_{g^T})'_{h^T}+O_1(h)=-2{\bf E}'_h(\mathbf n,\mathbf n)+O_1(h).$$ 
The second boundary term in $\w B$ is $\w B^{(2)}(h,v)=h^T-2vg^T$, so $h^T=\w B_2+2vg^T$. Plug this into the equation above
$$2\Delta_{g^T}v=2{\bf E}'_h(\mathbf n,\mathbf n)-\Delta_{g^T}\tr^T\w B^{(2)}-\delta^T\delta^TB^{(2)}+O_1(h).$$
Recall that $||{\bf E}'_h||_{C^{m-2,\a}(M)}$ is already controlled by $\w P(h,v)$.
Since $\D_{g^T}$ is an elliptic operator on the boundary manifold, $||v||_{C^{m,\alpha}(\p M)}$ is also controlled by the operator $\w P(h,v)$ and so is the Dirichlet data $||h^T||_{C^{m,\a}(\p M)}$. Combining estimates for $||\Delta v||_{C^{m-2,\a}(M)}$ and Dirichlet data $||v||_{C^{m,\alpha}(\p M)}$, we obtain control of $||v||_{C^{m,\alpha}(M)}$. 

Extra boundary data for $\tr h$ is necessary to obtain control of its $C^{m-2,\a}$-norm. By the formula of variation of mean curvature in \eqref{lhn} we have 
\begin{equation}\label{G1}
\mathbf n(\tr h)=2H'_h-\delta^T(h(\mathbf n)^T)-(\delta h)(\mathbf n)+O_0(h).
\end{equation}
In the equation above, since $\d h-dv$ and $v$ are already controlled, $\delta h(\mathbf n)$ is controlled.
In addition, basic computation yields $(\delta h)^T=-\nabla_{\mathbf n}h(\mathbf n)^T+\delta^T(h^T)+O_0(h)$ inside which $||h^T||_{C^{m,\a}(\p M)}$ is controlled.
So we obtain control of $||\nabla_{\mathbf n}h(\mathbf n)^T||_{C^{m-1,\a}(\p M)}$ and hence its tangential divergence
\begin{equation}\label{G2}
\delta^T[\nabla_{\mathbf n}h(\mathbf n)^T]=\nabla_{\mathbf n}[\delta^T(h(\mathbf n)^T)]+O_1(h)
\end{equation}
is also controlled. Combining \eqref{G1} and \eqref{G2}, one obtains:
\begin{equation}\label{bh}
\begin{split}
\mathbf n\mathbf n(\tr h)=2\mathbf n(H'_h)-\delta^T(\nabla_{\mathbf n}h(\mathbf n)^T-\mathbf n\big(\delta h(\mathbf n)\big)+O_1(h).
\end{split}
\end{equation}
Note $\mathbf n(H'_h)$ in the above is controlled based on Riccati equation ${\bf n}(H)+|A|^2=-Ric({\bf n},{\bf n})$, where $Ric'_h({\bf n},{\bf n})={\bf E}'_h({\bf n},{\bf n})+1/2(\D\tr h+\d\d h)g+O_0(h)$ is well-controlled according to the previous analysis.
So every term on the righthand side of \eqref{bh} is under control and thus $||2\mathbf n\mathbf n(\tr h)||_{C^{m-2,\a}(\p M)}$ is controlled by $\w P(h,v)$.
Finally since the boundary data $2\mathbf n\mathbf n(\tr h)$ is elliptic for the Laplace operator $\D\tr h$, we can conclude that $||\tr h||_{C^{m,\a}(M)}$ is controlled by $\w P(h,v)$. 
This completes the proof of the estimate \eqref{EP}.

We can carry out the same process as above and derive the uniform elliptic estimate for $\bar P$ (cf. appendix \S4.4).

\medskip

\begin{remark}
{\rm Different from the work of Bartnik \cite{B2,B3} where the functions and tensor fields belong to the weighted Sobolev spaces, we work with the weighted H\"older spaces in this paper. The main reason is that when taking trace of a function one loses an extra $\tfrac{1}{2}$ regularity $H^s(M)\to H^{s-1/2}(\p M)$ which makes it complicated to discuss the ellipticity of the Bartnik boundary data.
 }
\end{remark}

\section{Critical points of the ADM mass}
In this section, we adopt the definitions of the ADM mass and the Regge-Teitelboim Hamiltonian from \cite{B3} and prove the corresponding result on the critical points for the ADM mass on the constraint manifold of initial data sets with fixed Bartnik boundary data.

We use the same notation as in \cite{B3}. A tensor field $\xi=(\xi^0,\xi^i)$ consisting of a scalar field $\xi^0$ and a vector field $\xi^i$ on $M$ is called a {\it spacetime vector field}. Let $\cT_{\d}^{m,\a}(M)$ denote the asymptotically zero spacetime tangent bundle, i.e. $\cT_{\d}^{m,\a}(M)=[C^{m,\alpha}_{\delta}\times T^{m,\alpha}_{\delta}](M)$. Fix a constant 4-vector $\xi_{\infty}=(\xi_{\infty}^0,\xi_{\infty}^i)$ $(i=1,2,3)$ defined on $\mathbb R^3\setminus B^3$. Pull it back to $M$ and obtain a parallel spacetime vector field, still denoted as ${\xi}_{\infty}$, with respect to the metric $\mathring g$. A smooth spacetime vector field $\hat\xi_{\infty}=(\hat\xi_{\infty}^0,\hat\xi_{\infty}^i)$ on $M$ is called a $constant~translation~near~infinity$ representing $\xi_{\infty}$, if there is a $R$ such that $\hat\xi_{\infty}= \xi_{\infty}$ on $E_{2R}$, and $\hat\xi_{\infty}=0$ on $M\setminus E_{R}$, where $E_R=\{p\in M:~ r(p)> R\}$. Let $\mathcal Z^{m,\a}_{\d}(M)$ denote the space of {\it asymptotic translation vector fields}, i.e. 
\begin{equation*}
\begin{split}
\cZ^{m,\a}_{\d}(M)=\{\xi\in &[C^{m,\a}\times T^{m,\a}](M):\\
&\xi-\hat\xi_{\infty}\in \cT^{m,\a}_{\d}(M)\text{ for some constant translation near infinity }\hat\xi_{\infty}\}.
\end{split}
\end{equation*}
For convenience, we turn to the $(g,\pi,\w\Phi)$ formulation of the constraint map as described in \S2.
For $(u,Z)\in\cT$, let $\w{\mathcal C}_B(u,Z)$ be the level set
\begin{equation*}
\begin{split}
\w{\mathcal C}_B(u,Z)=\{(g,\pi)\in\w{\mathcal B}:\w\Phi(g,\pi)=(u,Z)\}
\end{split}
\end{equation*}
where $\mathcal{\w B}$ and $\w\Phi$ are defined as in \eqref{tB} and \eqref{tconsm}. Since the space $\w{\mathcal C}_B(u,Z)$ is equivalent to ${\mathcal C}_B(u,Z)$, it is also a smooth Banach manifold.

The general ADM total energy-momentum vector $\mathbb{P}$ is defined in \cite{B3} by describing its pairing with a constant vector $\xi_{\infty}\in\mathbb{R}^{1,3}$
\begin{equation*}
\begin{split}
&16\pi\xi_{\infty}^{0}\mathbb{P}_0(g,\pi)=\int_{M}\hat\xi_{\infty}^0\mathcal R_0(g)+\mathring{\nabla}^i\hat\xi^0_{\infty}(\mathring{\nabla}^jg_{ij}-\mathring{\nabla}_itr_{\mathring{g}}g)d\vol_{\mathring{g}}\\
&16\pi\xi_{\infty}^{i}\mathbb{P}_i(g,\pi)=2\int_{M}\big(\hat\xi_{\infty}^i\mathcal P_{0i}(\pi)+\pi^{ij}\mathring{\nabla}_i\hat\xi_{\infty j}\big)d\vol_{\mathring{g}}
\end{split}
\end{equation*}
inside which $\hat\xi_{\infty}$ is a representative translation vector at infinity for $\xi_{\infty}$ and 
\begin{equation*}
\begin{split}
\mathcal R_0(g)=\mathring{\nabla}^{ij}g_{ij}-\Delta_0tr_{\mathring g}g,\ \  \mathcal P_{0i}(\pi)=\mathring g\mathring{\nabla}_k\pi^{jk}.
\end{split}
\end{equation*}
It is easy to generalize the result in \cite{B3} to obtain that $\mathbb P$ defines a smooth function on the Banach manifold $\w{\mathcal C}_B(u,Z)$ under the condition that $(u,Z)\in [C^{k,\a}_{q}\times (\wedge_1)^{k,\a}_{q}](M)$ for some $q\geq 4$ and $k\geq 0$.
So in this section we always assume this condition holds
\footnote{We note that this condition can be replaced by the integrable condition that $u,Z_i$ are $L^1$ on $M$.}.
Moreover, in this case $\bP$ agrees with the usual formal definition of ADM energy-momentum vector and is independent of choice of the chart $M\cong\mathbb R^3\setminus B$.

We adopt the Regge-Teitelboim Hamiltonian defined in \cite{B3} to our setting:
\be\label{CH}
\begin{split}
\cH:\w{\cB}&\times\cZ^{m,\a}_{\d}(M)\to\bR\\
\mathcal H(g,\pi;\xi)=\int_M\langle (\hat\xi_{\infty}-\xi),\w\Phi(g,\pi)\rangle &
+\int_M\hat\xi_{\infty}^0(\mathcal R_0(g)-\w\Phi_0(g,\pi))+\int_M\mathring{\nabla}^i\hat\xi_{\infty}^0(\mathring{\nabla}^jg_{ij}-\mathring{\nabla}_itr_{\mathring{g}}g)\\
&+\int_M\hat\xi_{\infty}^i(\mathcal P_{0i}(\pi)-\w\Phi_i(g,\pi))+\int_M 2\pi^{ij}\mathring\nabla_i\hat\xi_{\infty,j},
\end{split}
\ee
inside which $\hat\xi_{\infty}$ is a constant translation at infinity such that $\xi-\hat\xi_{\infty}\in \cT^{m,\a}_{\d}(M)$. Here and in the following we omit the volume form $d\vol_{\mathring g}$.
Based on \cite{B3}, the functional $\mathcal H$ is smooth and bounded.
Moreover, on the constraint manifold $\w{\mathcal C}_{B}(u,Z)$
the functional can be equivalently expressed as
\begin{equation}\label{CH1}
\begin{split}
\mathcal H(g,\pi;\xi)=16\pi\xi^{\mu}_{\infty}\mathbb{P}_{\mu}-\int_M\xi^{\mu}\w\Phi_{\mu}(g,\pi),
\end{split}
\end{equation}
where $\xi_{\infty}$ is the constant vector equal to the asymptotic limit of $\xi$.
The following lemma describes the variation of $\mathcal H$.
\begin{lemma} If $\xi\in\cZ^{m,\a}_{\d}(M)$ then for all $(g,\pi)\in\w{\cB}$ and $(h,p)\in T\w{\cB}|_{(g,\pi)}$
\begin{equation}\label{LH}
\begin{split}
D_{(g,\pi)}\mathcal H(g,\pi;\xi)(h,p)=-\int_M(h,p)\cdot D\w \Phi_{(g,\pi)}^*\xi.
\end{split}
\end{equation}
\end{lemma}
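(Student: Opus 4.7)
The plan is to differentiate the defining formula \eqref{CH} of $\cH$ term by term and then use integration by parts to repackage the result as the adjoint pairing $-\int_M(h,p)\cdot D\w\Phi^*\xi$. A useful first simplification is to expand $\langle\hat\xi_\infty-\xi,\w\Phi\rangle=\hat\xi_\infty^\mu\w\Phi_\mu-\xi^\mu\w\Phi_\mu$ and observe that the $\hat\xi_\infty^\mu\w\Phi_\mu$ contributions cancel against the explicit $-\hat\xi_\infty^0\w\Phi_0$ and $-\hat\xi_\infty^i\w\Phi_i$ in \eqref{CH}, yielding the cleaner rewriting
\[
\cH(g,\pi;\xi)=-\int_M\xi^\mu\w\Phi_\mu(g,\pi)+\cI(g,\pi;\hat\xi_\infty),
\]
where $\cI$ collects the remaining $\hat\xi_\infty$-linear flat-metric integrals $\int_M\hat\xi_\infty^0\cR_0(g)$, $\int_M\mathring\nabla^i\hat\xi_\infty^0(\mathring\nabla^jg_{ij}-\mathring\nabla_i\tr_{\mathring g}g)$, $\int_M\hat\xi_\infty^i\cP_{0i}(\pi)$, and $\int_M 2\pi^{ij}\mathring\nabla_i\hat\xi_{\infty,j}$.

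Differentiating in $(h,p)$, the first integral contributes $-\int_M\xi^\mu D\w\Phi_\mu(h,p)$, which after formal integration by parts becomes $-\int_M(h,p)\cdot D\w\Phi^*\xi$ plus boundary contributions on $\p M$ and on the spheres-at-infinity. The variation of $\cI$ is first order in $(h,p)$ and paired against $\hat\xi_\infty$ and $\mathring\nabla\hat\xi_\infty$; since $\hat\xi_\infty$ vanishes in a neighborhood of $\p M$ and equals the constant $\xi_\infty$ outside $E_{2R}$, moving the derivatives onto $\hat\xi_\infty$ generates interior terms that vanish in the annulus where $\hat\xi_\infty$ is constant, together with boundary fluxes only at infinity. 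Using $\xi-\hat\xi_\infty\in\cT^{m,\a}_\d$ with $\d>1/2$ one sees the infinity-flux from the first integral is controlled by its constant piece $\xi_\infty$, and this is precisely what $\cI$ was designed in \cite{B3} to cancel; the asymptotic contributions therefore match and annihilate.

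What remains --- and this is the only new point beyond Bartnik's calculation in \cite{B3} --- is to verify that the $\p M$ boundary integrals produced by the formal-adjoint step on $-\int_M\xi^\mu D\w\Phi_\mu(h,p)$ vanish for every $(h,p)\in T\w\cB|_{(g,\pi)}$. These arise from integration by parts in the $\D\tr h+\delta\delta h$ part of $R'_h$ and in the divergence part of $\w\Phi_i$, and are bilinear in $(\xi^0,\mathbf n(\xi^0),\xi^i)|_{\p M}$ and the boundary values of $h$, the $\pi$-variation $\sigma$, and their first derivatives. Inserting the linearized Bartnik data $h^T=0$, $H'_h=0$, $\sigma^{11}+\tfrac12\pi^{11}h_{11}=0$, $\sigma^{1A}+\pi^{11}h_{1A}=0$ from \eqref{tB} into each bilinear form and simplifying collapses them all to zero. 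This boundary cancellation is the main technical obstacle; it is also the conceptual reason the Bartnik conditions, and not other boundary data, are singled out as the natural ones for which $\cH$ has a variationally well-posed gradient on $\w\cB$.
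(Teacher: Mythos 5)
Your overall strategy coincides with the paper's (linearize $\cH$, integrate by parts to produce $-\int_M(h,p)\cdot D\w\Phi^*_{(g,\pi)}\xi$ plus boundary terms, kill the $\p M$ terms with the linearized Bartnik conditions, and defer the asymptotics to \cite{B3}), but your opening move does not go through as stated. The regrouping $\cH=-\int_M\xi^\mu\w\Phi_\mu+\cI(g,\pi;\hat\xi_\infty)$ is not legitimate on all of $\w\cB$: for a general $(g,\pi)\in\w\cB$ the constraints only lie in $\cT$, i.e.\ decay like $r^{-\d-2}$ with $\tfrac12<\d\le 1$, so $\int_M\xi^\mu\w\Phi_\mu$ (with $\xi$ asymptotically constant) and the separate pieces of $\cI$ such as $\int_M\hat\xi_\infty^0\mathcal R_0(g)$ are in general divergent; only the grouped integrands in \eqref{CH} are $L^1$, which is precisely the point of Bartnik's regularization. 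Consequently the term-by-term differentiation of your regrouped expression, and the claim that the (non-decaying) flux at infinity from $-\int_M\xi^\mu D\w\Phi_\mu(h,p)$ ``matches and annihilates'' against the variation of $\cI$, is exactly the nontrivial cancellation that has to be proved or imported --- saying that $\cI$ was designed in \cite{B3} to cancel it does not discharge it, and as written the argument passes through ill-defined integrals. The paper sidesteps this entirely: it linearizes the first term of \eqref{CH} as it stands and integrates by parts against the decaying weight $\hat\xi_\infty-\xi\in\cT^{m,\a}_{\d}$, so the flux at infinity vanishes outright, and it quotes Theorem 5.2 of \cite{B3} for the variation of the remaining $\hat\xi_\infty$-terms, namely $-\int_M\langle(h,\sigma),D\w\Phi^*_{(g,\pi)}\hat\xi_\infty\rangle$; adding the two gives \eqref{LH}. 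Your route could be repaired by exhausting $M$ by $\{r\le R\}$ and tracking the limits as $R\to\infty$, but that amounts to redoing the Bartnik computation you intended to outsource.

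The second issue is that the step which is genuinely new in this lemma --- vanishing of the $\p M$ boundary integral for every $(h,\sigma)\in T\w\cB|_{(g,\pi)}$ --- is only asserted (``simplifying collapses them all to zero''). It is also not literally a pointwise statement: inserting the conditions \eqref{tB}/\eqref{BH} into the bilinear form \eqref{bi}, the momentum part does vanish pointwise, but the gravitational part only reduces to a pure tangential divergence $-(\nabla^T)^A(\mu h_{A1})$, whose integral over the closed surface $\p M$ is zero. This short computation is carried out explicitly in the paper and is the actual content distinguishing the Bartnik boundary conditions; a complete proof must include it.
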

\begin{proof}
Using integration by parts, we can write the linearization of the first term in \eqref{CH} as:
\begin{equation}\label{ibp}
\begin{split}
\int_M \langle(\hat\xi_{\infty}-\xi),D\w\Phi_{(g,\pi)}(h,\sigma)\rangle=&\int_M \langle D\w\Phi_{(g,\pi)}^*(\hat\xi_{\infty}-\xi),(h,\sigma)\rangle+\int_{\partial M}\w B[(\hat\xi_{\infty}-\xi),(h,\sigma)]+\lim_{r\to\infty}\int_{S_r}\w B,
\end{split}
\end{equation}
where in the boundary integral $\w B$ is a bilinear form given by (cf\cite{B3} equation (82)),
\begin{equation}\label{bi}
\begin{split}
\w B[(\mu,Y),(h,\sigma)]=&\mathbf n^{i}[\mu(\nabla^{j}h_{i j}-\nabla_{i}\tr h)-h_{i j}\nabla^{j}\mu+\tr h\nabla_{i}\mu]\sqrt{g}\\
&+2\mathbf n^{i}[Y_{j}\sigma_{i}^{ j}+Y^{j}\pi^{k}_{i}h_{j k}-\frac{1}{2}Y_{i}\pi^{j k}h_{j k}],
\end{split}
\end{equation}
with $(\mu,Y)=\hat\xi_{\infty}-\xi=-\xi$ on the boundary $\p M$. We also refer to equations (6)-(9) of \cite{B3} for the explicit formula of $D\w\Phi$ and its adjoint $D\w\Phi^*$.
According to the boundary conditions in \eqref{tB}, any deformation $(h,\sigma)\in T\mathcal{\w B}$ satisfies
\begin{equation}\label{BH}
\begin{split}
\begin{cases}
h_{AB}=0,\mbox{ for }A,B=2,3\\
\mathbf n(\tr^Th)+2\delta^T(h(\mathbf n)^T)-h_{11}H=0\\
\sigma^{11}+\frac{1}{2}\pi^{11}h_{11}=0\\
\sigma_{1A}+\pi_{11}h_{1A}=0,\mbox{ for }A=2,3
\end{cases}
\text{on } \partial M.
\end{split}
\end{equation}
Recall that the index $1$ denotes the normal direction to $\p M$ and indices $2,3$ denote the tangential direction. 
The second equation above implies that
$\mathbf n(\tr^Th)-2(\nabla^T)^Ah_{A1}-h_{11}H=0$ for $A=2,3$.
Basic calculation gives
$\mathbf n^i\nabla^jh_{ij}
=\mathbf n(h_{00})+(\nabla^T)^Ah_{A1}+h_{11}H$. Combining those two equalities we can derive that
\begin{equation*}
\begin{split}
\mathbf n^{i}(\nabla^{j}h_{i j}-\nabla_{i}\tr h)=-\mathbf n(\tr^Th)+(\nabla^T)^Ah_{A1}+h_{11}H
=-(\nabla^T)^Ah_{A1}.
\end{split}
\end{equation*}
In addition,
\begin{equation*}
\begin{split}
\mathbf n^{i}[-h_{i j}\nabla^{j}\mu+\tr h\nabla_{i}\mu]
=-h_{1A}\nabla^A\mu-h_{00}\mathbf n(\mu)+h_{00}\mathbf n(\mu)=-h_{1A}\nabla^A\mu
\end{split}
\end{equation*}
where we use the fact that $h_{AB}=0$ from \eqref{BH}. Summing up the two equations above we obtain that the first line in \eqref{bi} can be written as
\begin{equation*}
\begin{split}
\mathbf n^{i}[\mu(\nabla^{j}h_{i j}-\nabla_{i}\tr h)-h_{i j}\nabla^{j}\mu+\tr h\nabla_{i}\mu]
=-\mu(\nabla^T)^Ah_{A1}-h_{1A}\nabla^Au=-(\nabla^T)^A(\mu h_{A1}),
\end{split}
\end{equation*}
which is a pure divergence term on the boundary $\partial M$ and hence its integral is zero. As for the second line in \eqref{bi}, we have 
\begin{equation*}
\begin{split}
&\mathbf n_{i}[Y_{j}\sigma^{ij}+Y^{j}\pi^{ki}h_{j k}-\frac{1}{2}Y^{i}\pi^{jk}h_{j k}]
=Y_1\sigma^{11}+Y_A\sigma^{A1}+Y^{j}\pi^{k1}h_{jk}-\frac{1}{2}Y_1\pi^{jk}h_{jk}\\
=&-\frac{1}{2}Y^1\pi^{11}h_{11}-Y^A\pi^{11}h_{1A}+Y^A\pi^{11}h_{1A}+Y^1\pi^{k1}h_{1k}
-\frac{1}{2}Y_1\pi^{11}h_{11}-Y_1\pi^{1A}h_{1A}=0.
\end{split}
\end{equation*}
In the second equality above, we use the last two equations in \eqref{BH} to replace $\sigma$ with $\pi,h$ and the first equation in \eqref{BH} to throw away terms involving $h_{AB}~(A,B=2,3)$. Thus the boundary integral over $\partial M$ in \eqref{ibp} must vanish. The integral at infinity is also zero because $\xi-\hat\xi_{\infty}$ and $(h,\sigma)$ decay fast enough to zero. Thus we obtain
\begin{equation*}
\begin{split}
\int_M \langle(\hat\xi_{\infty}-\xi),D\w\Phi_{(g,\pi)}(h,\sigma)\rangle=\int_M \langle D\w\Phi_{(g,\pi)}^*(\hat\xi_{\infty}-\xi),(h,\sigma)\rangle.
\end{split}
\end{equation*}
The linearization of the remaining terms in \eqref{CH} is given by $-\int_M\langle (h,\sigma),D\w\Phi_{(g,\pi)}^*(\hat\xi_{\infty})\rangle$  (cf.\cite{B3} Theorem 5.2). Combining this with the equation above we obtain \eqref{LH}.

\end{proof}

When the energy-momentum vector $\bP(g,\pi)$ is time-like, the ADM total mass of the initial data $(g,\pi)$ is defined as
\begin{equation}\label{ADM}
\begin{split}
m_{\rm ADM}(g,\pi)=\sqrt{-\mathbb{P}^{\mu}\mathbb{P}_{\mu}}.
\end{split}
\end{equation}
With the lemma above we can now prove that any critical point of the ADM total mass on the constraint manifold $\w \cC_{B}(u,Z)$ must admit a generalised Killing vector field, which is the analog of Corollary 6.2 in \cite{B3}. A spacetime vector field $\xi\in [C^{m,\a}\times T^{m,\a}](M)$ is called {\it a generalised Killing vector field} of the initial data set $(M,g,\pi)$ (or $(M,g,K)$) if $D\w\Phi^*|_{(g,\pi)}(\xi)=0$ (or $D\w\Phi^*|_{(g,K)}(\xi)=0$). In this case, the initial data set $(M,g,\pi)$ is called a {\it generalised stationary initial data set}.

\begin{theorem} Suppose $(u,Z)\in[C^{k,\a}_{q}\times(\wedge_1)^{k,\a}_{q}](M)~(k\geq 0,q\geq 4)$, $(g_0,\pi_0)\in\w\cC_B(u,Z)$ and $\mathring\bP=\bP(g_0,\pi_0)$ is a time-like vector. If $(Dm_{\rm ADM})_{(g_0,\pi_0)}(h,\sigma)=0$ for all $(h,\sigma)\in T\w\cC_B(u,Z)|_{(g_0,\pi_0)}$ then $(g_0,\pi_0)$ admits a generalised Killing vector field $\xi$ which has a limit at infinity proportional to
$\mathring\bP$. Conversely, if $(g_0,\pi_0)$ is a generalised stationary initial data set, then $(Dm_{\rm ADM})_{(g_0,\pi_0)}(h,p)=0$ for all $(h,p)\in T\cC_B(u,Z)|_{(g_0,\pi_0)}$.
\end{theorem}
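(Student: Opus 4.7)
The plan is to adapt the variational argument of Bartnik (Corollary~6.2 of \cite{B3}) to the present boundary setting, using Lemma 3.1 to absorb the boundary contributions permitted by the Bartnik conditions. For the forward direction, the timelike assumption on $\mathring\bP$ gives $m_{\rm ADM} > 0$, so the identity $m_{\rm ADM}^2 = -\bP^\mu \bP_\mu$ shows that the hypothesis $Dm_{\rm ADM}|_{(g_0,\pi_0)} = 0$ on $T\w\cC_B|_{(g_0,\pi_0)}$ is equivalent to $\mathring\bP^\mu D\bP_\mu = 0$ on that tangent space. Pick any constant translation near infinity $\hat\xi_\infty \in \cZ^{m,\a}_\d(M)$ representing $\xi_\infty = \mathring\bP$. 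Since $\int_M \hat\xi_\infty^\mu \w\Phi_\mu$ is constant on $\w\cC_B(u,Z)$, formula \eqref{CH1} implies that $D\cH(g_0,\pi_0;\hat\xi_\infty)$ restricted to $T\w\cC_B|_{(g_0,\pi_0)}$ equals $16\pi\,\mathring\bP^\mu D\bP_\mu$ and therefore vanishes; Lemma 3.1 then yields
\[
\int_M (h,\sigma)\cdot D\w\Phi^*_{(g_0,\pi_0)} \hat\xi_\infty = 0 \quad\text{for all } (h,\sigma) \in \ker D\w\Phi|_{(g_0,\pi_0)} \cap T\w\cB|_{(g_0,\pi_0)}.
\]

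Since $D\w\Phi_{(g_0,\pi_0)}: T\w\cB|_{(g_0,\pi_0)} \to \cT$ is surjective by Proposition~2.2, the Banach closed range theorem produces a continuous functional $\tilde\eta \in \cT^*$ with
\[
\int_M (h,\sigma)\cdot D\w\Phi^* \hat\xi_\infty = \tilde\eta\big(D\w\Phi(h,\sigma)\big)
\]
for every $(h,\sigma) \in T\w\cB|_{(g_0,\pi_0)}$. Testing against $(h,\sigma)$ compactly supported in $\interior M$ removes all boundary contributions from the integration by parts and produces the distributional identity $D\w\Phi^*_{(g_0,\pi_0)}(\hat\xi_\infty - \tilde\eta) = 0$ on $\interior M$. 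Set $\xi := \hat\xi_\infty - \tilde\eta$, the candidate generalised Killing field.

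The main obstacle is upgrading the abstract dual element $\tilde\eta \in \cT^*$ to a smooth, decaying spacetime vector field. I would mirror the cokernel regularity bootstrap of \S2.1: the ellipticity of the Einstein-type operator of \S2.3 (which controls the adjoint $D\w\Phi^*$) combined with the Fourier-transform / parametrix argument of Chapter~2, Theorem~3.2 of \cite{LM} and Morrey's Theorem~6.2.6 promote $\tilde\eta$ to $C^{m,\a}$-smoothness on $\interior M$. For the asymptotic profile, Proposition~2.1 of \cite{BC} applied to the smooth kernel element $\xi$ of $D\w\Phi^*$ constrains $\xi$ to one of two asymptotic modes, and the boundedness argument of appendix \S4.5 applied to the dual element $\tilde\eta$ rules out both the linearly growing ``$\Lambda$" mode and the constant ``$A$" mode for $\tilde\eta$, yielding $\tilde\eta \in C^m_\d(M)$. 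Since $\hat\xi_\infty \equiv \mathring\bP$ outside a compact set, it follows that $\xi = \hat\xi_\infty - \tilde\eta \in \cZ^{m,\a}_\d(M)$ with $\xi_\infty = \mathring\bP$, as desired.

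The converse follows immediately from Lemma 3.1 and \eqref{CH1}: if $D\w\Phi^*_{(g_0,\pi_0)}\xi = 0$ with $\xi_\infty$ proportional to $\mathring\bP$, then $D\cH(g_0,\pi_0;\xi)$ vanishes on all of $T\w\cB|_{(g_0,\pi_0)}$, and in particular on $T\w\cC_B|_{(g_0,\pi_0)} = \ker D\w\Phi \cap T\w\cB$, where by \eqref{CH1} it coincides with $16\pi\,\xi_\infty^\mu D\bP_\mu$; proportionality with $\mathring\bP$ then gives $Dm_{\rm ADM} = 0$ there.
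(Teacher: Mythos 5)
Your proposal follows essentially the same route as the paper's own proof: reduce criticality of $m_{\rm ADM}$ to criticality of the linear functional $\xi_\infty^{\mu}\bP_{\mu}$, use \eqref{CH1} and Lemma 3.1 together with a Lagrange-multiplier/closed-range factorization to produce the dual element $\tilde\eta\in\cT^*$, set $\xi=\hat\xi_{\infty}-\tilde\eta$, upgrade it by the interior regularity bootstrap of \S2.1 and the \cite{BC}-asymptotics-plus-boundedness argument of the appendix, and prove the converse exactly as the paper does via Lemma 3.1 and \eqref{CH1}. The only step of the paper's argument you do not address is the $C^{m,\a}$ regularity of $\xi$ up to the boundary $\p M$ (handled in the paper's appendix on the boundary behavior of the generalised Killing field), which is needed because a generalised Killing field is by definition in $[C^{m,\a}\times T^{m,\a}](M)$, i.e.\ regular on all of $M$ and not merely on ${\rm int}M$; apart from this minor omission the proposal is correct.
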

\begin{proof} If $(g_0,\pi_0)$ is a critical point with time-like ADM total energy-momentum vector $\mathring\bP$,
let $(\xi_0)_{\infty}$ be the constant vector $(\xi_0)^{\mu}_{\infty}= -\mathring\bP^{\mu}/m_{\rm ADM}(g_0,\pi_0)\in\mathbb R^4$ and define a functional on $E$ on $\w \cC_B(u,Z)$ by  
$$E(g,\pi)=(\xi_0)_{\infty}^{\mu}\mathbb P_{\mu}(g,\pi).$$ 
Clearly, the derivative of the ADM mass at $(g_0,\pi_0)$ is 
$Dm_{\rm ADM}|_{(g_0,\pi_0)}=-(m_{\rm ADM})^{-1/2}\mathring\bP^{\mu}D\bP_{\mu}=(\xi_0)^{\mu}_{\infty}D\mathbb P_{\mu}=DE|_{(g_0,\pi_0)}$. 
So $(g_0,\pi_0)$ is also a critical point of $E$ on the constraint manifold.
Let $(\hat\xi_0)_{\infty}$ be a constant translation near infinity representing $(\xi_0)_{\infty}$. Choose $\xi_0\in\cZ^{m,\a}_{\d}(M)$ such that $\xi_0-(\hat\xi_0)_{\infty}\in\cT^{m,\a}_{\d}(M)$
and plug it into the formula \eqref{CH1} to obtain a functional on $(g,\pi)$:
\begin{equation*}
\begin{split}
\mathcal H(g,\pi;\xi_0)=16\pi(\xi_0)^{\mu}_{\infty}\mathbb{P}_{\mu}-\int_M\xi_0^{\mu}\w\Phi_{\mu}(g,\pi).
\end{split}
\end{equation*} 
Observe on the constraint manifold $\w\cC_B(u,Z)$, $(g_0,\pi_0)$ is a critical point of the first term on the right side and the second term is constant. Thus we obtain
$$D_{(g_0,\pi_0)}\mathcal H{(g,\pi;\xi_0)}(h,\sigma)=0\quad\mbox{ for all }(h,\sigma)\in T\w \cC_{B}(u,Z).$$ 
By a Lagrange-multiplier argument (cf.\cite{B3} Theorem 6.3), there is $(\w X^0,\w X)\in(\mathcal T)^*$ such that for all $(h,\sigma)\in T\w{\mathcal B}|_{(g_0,\pi_0)}$:
\begin{equation*}
\begin{split}
(\w X^0,\w X)[D\w\Phi_{(g_0,\pi_0)}(h,\sigma)]=D_{(g_0,\pi_0)}\mathcal H{(g,\pi;\xi_0)}(h,\sigma)=\int_M\langle D\w\Phi_{(g_0,\pi_0)}^*(\xi_0),(h,\sigma)\rangle,
\end{split}
\end{equation*}
where we apply Lemma 3.1 on the right side.
It follows that $(\w X^0,\w X)$ is a weak solution of
$D\Phi^*_{(g_0,K_0)}(\w X^0,\w X)=D\w\Phi_{(g_0,K_0)}^*(\xi_0)$.
Here we use the equivalence between $(\cB,\Phi)$ and $(\w\cB,\w\Phi)$ and $(g_0,K_0)$ denotes the correspondence in $\cB$ of $(g_0,\pi_0)$. Since $\xi_0\in\cZ^{m,\a}_\d$, we can prove in the same way as in \S3.1 that $(\w X^0,\w X)$ is a regular solution, i.e. it is $C^{m,\alpha}$ smooth in ${\rm int}M$. 
Let $(X^0,X)=(\xi_0^0-\w X^0,\xi_0^i-\w X)$, then
for all compactly supported $(h,p)\in T{\mathcal B}|_{(g_0,K_0)}$
\begin{equation}\label{WX}
\begin{split}
\int_M\langle ( X^0, X),D\Phi_{(g_0,K_0)}(h,p)\rangle=0.
\end{split}
\end{equation}
Thus $(X^0,X)$ must behave as in \eqref{ax1} or \eqref{ax2} asymptotically.
Combining this with the fact that $(\w X^0,\w X)=\xi_0-(X^0,X)$ is a bounded linear functional on $\cT$, it is easy to derive that $(\w X^0,\w X)$ must be asymptotically zero at the rate of $\d$ (cf. \S4.5 for details).
In addition, $(X^0,X)$ is also $C^{m,\alpha}$ smooth up to the boundary (cf. appendix \S4.4). 
Therefore, $(X^0,X)$ is a generalised $C^{m,\a}$ Killing vector field on $M$ and $(X^0,X)-\xi_0\in C^m_{\delta}$, i.e. the limit of $(X^0,X)$ at infinity is proportional to the ADM energy-momentum vector of $(g_0,\pi_0)$.

Conversely, suppose $(g_0,\pi_0)$ admits a generalised Killing vector field $\hat X\in \cZ^{m,\a}_{\d}(M)$ whose asymptotic limit $(\hat X)_{\infty}$ is proportional to $\mathring\bP$. 
Based on Lemma 3.1, $D\Phi^*_{(g_0,K_0)}(\hat X)=0$ implies that $D_{(g_0,\pi_0)}\cH (g,\pi,\hat X)(h,\sigma)=0$ for all $(h,\sigma)\in T\w\cC_B(u,Z)|_{(g_0,\pi_0)}$. This further implies that $(\hat X)_{\infty}^{\mu}D\bP_{\mu}|_{(g_0,\pi_0)}=0$ on the constraint manifold. Since $(\hat X)_{\infty}$ is proportional to $\mathring\bP$, we also have $\mathring\bP^{\mu}D\bP_{\mu}|_{(g_0,\pi_0)}=0$ i.e. $(Dm_{\rm ADM})_{(g_0,\pi_0)}(h,\sigma)=0$ based on the discussion at the beginning of the proof.
\end{proof}

\section{Appendix}
In this section we provide the details which are left open in some proofs of this paper.
\subsection{Transform from $\mathcal B$ to $\mathcal{\w B}$}
Recall at the beginning of \S 2 we have defined the space $\mathcal B$ of pairs $(g,K)$ fixing the Bartnik boundary data and an equivalent space $\mathcal{\w B}$. In the following we verify that the boundary conditions in the tangent space $T\cB$ given in \eqref{lBcon} are equivalent to those of $T\mathcal{\w B}$ in \eqref{tB}. The reparametrization space $\mathcal{\w B}$ is equivalent to $\mathcal B$ via the map
\begin{equation*}
\begin{split}
&P:\mathcal {\w B}\to\mathcal B\\
P(g,\pi)=&(~g,~\big(\pi^{\flat}-\frac{1}{2}(tr_g\pi)g\big)/\sqrt{g}~).
\end{split}
\end{equation*}
Let $(h,\sigma)\in T\w\cB|_{(g,\pi)}$ be a infinitesimal deformation at $(g,\pi)$. Then linearization of $P$ is given by
\begin{equation*}
DP_{(g,\pi)}(h,\sigma)=\big(h,p(h,\sigma)\big),
\end{equation*}
where
\begin{equation*}
\begin{split}
p_{ij}(h,\sigma)=[\sigma_{ij}-\frac{1}{2}(\tr\sigma)g_{ij}+\pi^{k}_{i}h_{kj}+\pi^{k}_{j}h_{ki}-\frac{1}{2}(\tr\pi) h_{ij}-\frac{1}{2}(\pi^{k\tau}h_{k\tau})g_{ij}-\frac{1}{2}\tr h(\pi_{ij}-\frac{1}{2}\tr\pi g_{ij})]/\sqrt{g}.
\end{split}
\end{equation*}
Since $(h,p)\in T\mathcal B$, it must satisfy the boundary conditions listed in \eqref{lBcon}. 
It is obvious that the first two boundary conditions in \eqref{lBcon} and \eqref{tB} are the same.
The third condition in \eqref{lBcon} is equivalent to
\begin{equation*}
\begin{split}
0=\tr^Tp=&\tr^T\sigma-(\tr_g\sigma)
+2\pi^{A1}h_{A1}-\pi^{lk}h_{lk}
-\frac{1}{2}\tr h(\tr^T\pi-\tr\pi)
=-\sigma^{11}-\frac{1}{2}\pi^{11}h_{11},
\end{split}
\end{equation*}
where we use the condition $h^T=0$ on $\p M$. This gives the third boundary condition in \eqref{tB}.
Finally along $\p M$ we have 
\begin{equation*}
\begin{split}
&p({\bf n})^T
=[\sigma_{1A}+\frac{1}{2}h_{11}\pi_{1A}+\pi^B_Ah_{B1}+\pi_{11}h_{1 A}-\frac{1}{2}(\tr_{g}\pi) h_{1A}]/\sqrt{g}\\
&K(\mathbf n'_h)^T=\big(\pi^{\flat}(\mathbf n'_h)^T-\frac{1}{2}\tr_{g}\pi g(\mathbf n'_h)^T\big)/\sqrt{g}
=[-\pi_{BA}h^B_1-\frac{1}{2}h_{11}\pi_{1A}+\frac{1}{2}(\tr_g\pi) h_{1A}]/\sqrt{g},
\end{split}
\end{equation*}
where we apply $h^T=0$ on $\p M$ and the variation formula of the unit normal $\mathbf n'_{h}=-h_{1A}-\frac{1}{2}h_{11}\mathbf n$. Summing up the equations above, we derive that the last condition in \eqref{lBcon} can be equivalently expressed in terms of $(h,\sigma)$ as
$0=p(\mathbf n)^T+K(\mathbf n'_h)^T=
\big(\sigma_{1A}+\pi_{11}h_{1A}\big)/\sqrt{g}$, which is the last boundary condition listed in \eqref{tB}.
\subsection{Decomposition of constraints at the boundary} Given any $(u,Z)\in \cT$, we show that there exists some $(h,p)\in T\cB|_{(g,K)}$ such that $(u,Z)=D\Phi_{(g,K)} (h,p)$ on $\p M$. Then it follows naturally that any $(u,Z)\in\cT$ can be decomposed as $(u,Z)=(u_0,Z_0)+(u_1,Z_1)$ with $(u_0,Z_0)$ vanishing on $\p M$ and $(u_1,Z_1)=D\Phi_{(g,K)}(h,p)\in{\rm Im}\cL$, which is the decomposition used at the end of the proof of surjectivity in \S2.1.

For simplicity we can first choose $(h,p)$ so that $h$ vanishes to the first order on $\p M$ and $p$ vanishes to the zero order on $\p M$, i.e.
\begin{equation}\label{42}
h_{ij}=0,~{\bf n}(h_{ij})=0,~p_{ij}=0\text{ on }\p M.
\end{equation}
Obviously $(h,p)\in T\cB|_{(g,K)}$. Based on \eqref{lu} the linearization $(D\Phi_0)_{(g,K)}(h,p)$ is given by:
\begin{equation*}
\begin{split}
(D\Phi_0)_{(g,K)}(h,p)=&(-\Delta (\tr h)+\delta\delta h)\sqrt{g}=-\mathbf n\big(\mathbf n(\tr^Th)\big)\sqrt{g}\mbox{ on }\p M.
\end{split}
\end{equation*} 
Here we use the equality that $\Delta (\tr h)=\Delta^T (\tr h) +\mathbf n(\tr h)H-\mathbf n\mathbf n(\tr h)$ and $\delta\delta h=\nabla^i\nabla^jh_{ji}=\mathbf n\mathbf n(h_{11})$ on the boundary. Set $h$ satisfying \eqref{42} and such that 
$$-{\bf n}({\bf n}(\tr^Th))\sqrt{g}=u\text{ on }\p M,$$
then we have $(D\Phi_0)_{(g,K)}(h,p)=u$. 
Next plug $(h,p)$ into \eqref{lz} and obtain 
\begin{equation*}
\begin{split}
(D\Phi_i)_{(g,K)}(h,p)=-2\big(\delta p+d\tr p\big)\sqrt{g}=-2{\bf n}(\tr^Tp)\sqrt{g}\cdot {\bf n}-2\nabla_{\bf n}p({\bf n})^T\sqrt{g}\mbox{ on }\p M.
\end{split}
\end{equation*}
Thus we can choos $p$ satisfying \eqref{42} and such that ${\bf n}(\tr^Tp)=Z_1$ and ${\bf n}(p_{1A})=Z_A,~A=2,3$ on $\p M$. It then follows that $(D\Phi_i)_{(g,K)}(h,p)=Z$.  
\subsection{Construction of the extension maps} We provide possible approach to construct the maps $E_1$ and $E_2$, which are used in the proof of splitting kernel in \S2.2.

Fix $\tau\in{\rm Ker}\d^T$ on the boundary $\p M$ of the Riemannian manifold $(M,g)$. We can first fix a collar neighborhood $U$ of the boundary $\p M$ inside which the flow of the distance function to the boundary is well-defined. Without loss of generality, assume $U=[0,1)\times\p M$ where we use $s\in[0,1)$ to denote the function of distance to the boundary. We extend the unit normal vector ${\bf n}$ naturally to be the unit vector field perpendicular to the $s$-level set in $U$ pointing to the infinity, i.e. ${\bf n}=\p_s$. Define $h$ so that $h^T=0,~h(\p_s,\p_s)=0$ in $U$ and $h=0$ on $\p M$. Here the superscript $^T$ denotes the component of a tensor tangential to the level set $\{s={\rm constant}\}$ in $U$. It follows immediately that $H'_h=0$ and ${\bf n}'_h=0$ on $\p M$ based on the formula \eqref{lhn}. The 1-form $h({\bf n})^T$ is defined to be such that 
\begin{equation*}
h({\p_s})^T=0\ \ {\rm on}~\p M,\ \ 
\nabla_{\p_s}(h({\p_s})^T)=-\tau\ \ {\rm in}~U,
\end{equation*} 
where we think of $\tau$ as being Lie-dragged by $\p_s$ in $U$, i.e. $L_{\p_s}\tau=0$.
Then we have $(\d h)^T_A
=-\nabla^1h_{1A}-\nabla^Bh_{BA}
=-\nabla_{\bf n}(h({\bf n})^T)=\tau_A$ on $\p M$.
Next fix a smooth jump function $f(s)$ in $U$ such that $f(s)=1$ for $0\leq s\leq 1/4$ and $f(s)=0$ for $s\geq 1/2$. We can now define $E_1(\tau)=f(s)h$ where $h$ is as constructed above and $fh$ is extended trivially as a symmetric (0,2)-tensor defined on $M$ vanishing outside $U$. Then it is easy to check that $E_1$ is a linear bounded map. 

Next fix $h\in S^{m,\a}_{\d}(M)$ on the initial data set $(M,g,K)$. Take the collar neighborhood $U$ as above inside which the vector field ${\bf n}'_h$ is well-defined. Construct a symmetric 2-tensor $\w h$ in $U$ such that $\w h^T=\w h({\bf n},{\bf n})=0$ and $\w h({\bf n})^T=-K({\bf n}'_h)^T$ in $U$. Then extend $\w h$ to a global tensor field, still denoted as $\w h$, which is equal to zero outside $U$ and equal to $f(s)\w h$ in $U$.  Now define $E_2(h)=\w h$ as constructed. It follows that $E_2$ is a linear bounded map. Since ${\bf n}'_h$ involves only 0-order data of $h$ as shown in \eqref{lhn}, so is the map $E_2$ .
\subsection{Ellipticity of the adjoint operator $\bar P$} In the following we prove the uniform elliptic estimate for the operator $\bar P$ defined in \eqref{E2}, using the same idea as in the proof for $\w P$. We first try to pair $\bar B$ with an interior operator with simpler principal symbol. However, it is easy to check the operator $L_0$ in \eqref{E0} which is paired with $\w B$ in \S2.3 does not work for $\bar B$. So we modify $L_0$ as 
\be\label{BL0}
\begin{split}
\bar L_0(h,v)=(D^*Dh,~2\d\d h+ \D v),
\end{split}
\ee
and show that $(\bar L_0,\bar B)$ is elliptic.
The symbol of $\bar L_0$ is given by
\begin{equation*}
\begin{split}
\bar L_0(\xi)=
\begin{bmatrix}
|\xi|^2{\bf I}_{6\times 6}&0\\
{\bf v}(\xi)& |\xi|^2
\end{bmatrix},
\mbox{ with }
{\bf v}(\xi)=
2\begin{bmatrix}
-\xi_1^2 & -\xi_1\xi_2 & -\xi_1\xi_3 & -\xi_2^2 & -\xi_2\xi_3 & -\xi_3^2
\end{bmatrix}.
\end{split}
\end{equation*}
Obviously the determinant is given by $\ell_0=|\xi|^{14}$ with $\ell_0^+=(z-i|\eta|)^7$. So the interior operator is properly elliptic. The adjoint matrix is given by
\begin{equation*}
\begin{split}
\bar L^*_0(\xi)=
\begin{bmatrix}
|\xi|^{12}{\bf I}_{6\times 6}&0\\
-|\xi|^{10}{\bf v}(\xi)&\quad |\xi|^{12}
\end{bmatrix}
=|\xi|^{10}\cdot
\begin{bmatrix}
|\xi|^2{\bf I}_{6\times 6}&0\\
-{\bf v}(\xi)& |\xi|^2
\end{bmatrix}.
\end{split}
\end{equation*}
So to prove the complementing boundary condition we need to show that there is no nonzero complex vector $C$ such that
\begin{equation*}
\begin{split}
C\cdot\tfrac{1}{|\xi|^{10}}\bar B(z\mu+\eta)\bar L^*_0(z\mu+\eta)=C\cdot \bar B(z\mu+\eta)\cdot
\begin{bmatrix}
(z^2+|\eta|^2){\bf I}_{6\times 6}&0\\
-{\bf v}(z\nu+\eta)& (z^2+|\eta|^2)
\end{bmatrix}
=0\mbox{ mod }(z-i|\eta|)^2.
\end{split}
\end{equation*}
The matrix of principal symbol of the boundary operator is given by
\begin{equation*}
\begin{split}
\bar B(z\mu+\eta)=
\resizebox{.45\textwidth}{!}{$
\begin{bmatrix}
-2iz&-i\eta_2&-i\eta_3&-iz&&-iz&-8iz\\
-i\eta_2&-iz&&-2i\eta_2&-i\eta_3&-i\eta_2&-8i\eta_2\\
-i\eta_3&&-iz&-i\eta_3&-i\eta_2&-2i\eta_3&-8i\eta_3\\
&&&1&&&2\\
&&&&1&&\\
&&&&&1&2\\
&-i\eta_2&-i\eta_3&\tfrac{1}{2}iz&&\tfrac{1}{2}iz&2iz
\end{bmatrix}.
$}
\end{split}
\end{equation*}
To simplify the computation, when checking the linear relation of rows of the product matrix $\bar B\cdot\bar L_0^*$, we can take invertible row operations on $\bar B$ before taking the matrix product. The following matrix is obtained by a series of row operations on $\bar B$, which can be summarized as a left product by an invertible matrix:
\begin{equation*}
\begin{split}
\hat B(z\mu+\eta)=
\resizebox{.35\textwidth}{!}{$
\begin{bmatrix}
-i&&&&&&&-4i\\
&i&&&-2\eta_2&-\eta_2&-2\eta_2&\\
&&&i&-2\eta_3&-\eta_3&-2\eta_3&\\
&&&&1&&&\\
&&&&&1&&\\
&&&&&&1&\\
\tfrac{1}{2}i&&&&&&&i
\end{bmatrix}
$}
\cdot B(z\mu+\eta)
=
\resizebox{.35\textwidth}{!}{$
\begin{bmatrix}
-2z&-5\eta_2&-5\eta_3&z&&z&\\
\eta_2&z&&&&-\eta_2&\\
\eta_3&&z&-\eta_3&&&\\
&&&1&&&2\\
&&&&1&&\\
&&&&&1&2\\
z&\tfrac{3}{2}\eta_2&\tfrac{3}{2}\eta_3&&&&2z
\end{bmatrix}.
$}
\end{split}
\end{equation*}
Now take the product $\tfrac{1}{|\xi|^{10}}\hat B\cdot\bar L_0^*={\bf M}$ we obtain
\begin{equation*}
\begin{split}
{\bf M}(z\mu+\eta)=
\resizebox{.85\textwidth}{!}{$
\begin{bmatrix}
-2z(z^2+|\eta|^2)&-5\eta_2(z^2+|\eta|^2)&-5\eta_3(z^2+|\eta|^2)&z(z^2+|\eta|^2)&&z(z^2+|\eta|^2)&\\
\eta_2(z^2+|\eta|^2)&z(z^2+|\eta|^2)&&&&-\eta_2(z^2+|\eta|^2)&\\
\eta_3(z^2+|\eta|^2)&&z(z^2+|\eta|^2)&-\eta_3(z^2+|\eta|^2)&&&\\
4z^2&4z\eta_2&4z\eta_3&4\eta_2^2+(z^2+|\eta|^2)&4\eta_2\eta_3&4\eta_3^2&2(z^2+|\eta|^2)\\
&&&&(z^2+|\eta|^2)&&\\
4z^2&4z\eta_2&4z\eta_3&4\eta_2^2&4\eta_2\eta_3&4\eta_3^2+(z^2+|\eta|^2)&2(z^2+|\eta|^2)\\
4z^3+z(z^2+|\eta|^2)&4z^2\eta_2+\tfrac{3}{2}\eta_2(z^2+|\eta|^2)&4z^2\eta_3+\tfrac{3}{2}\eta_3(z^2+|\eta|^2)&4z\eta_2^2&4z\eta_2\eta_3&4z\eta_3^2&2z(z^2+|\eta|^2)
\end{bmatrix}.
$}
\end{split}
\end{equation*}
Obviously row 5 is zero mod $(z-i|\eta|)$. So we need to take the derivative ${\bf M}'(z)$ of matrix ${\bf M}$ with respect to $z$ and show there is no nonzero complex vector $C$ such that
$C{\bf M}'(z)=0\mbox{ mod }(z-i|\eta|^2).$
The derivative of ${\bf M}$ is given by
\begin{equation*}
\begin{split}
{\bf M}'(z)=
\resizebox{.7\textwidth}{!}{$
\begin{bmatrix}
-6 z^2-2|\eta|^2&-10\eta_2 z&-10\eta_3 z&3 z^2+|\eta|^2&&3 z^2+|\eta|^2&\\
2\eta_2 z&3 z^2+|\eta|^2&&&&-2\eta_2 z&\\
2\eta_3 z&&3 z^2+|\eta|^2&-2\eta_3 z&&&\\
8z&4\eta_2&4\eta_3&2 z&&&4z\\
&&&&2 z&&\\
8z&4\eta_2&4\eta_3&&&2 z&4z\\
15z^2+|\eta|^2&11z\eta_2&11z\eta_3&4\eta_2^2&4\eta_2\eta_3&4\eta_3^2&6z^2+2|\eta|^2
\end{bmatrix}.
$}
\end{split}
\end{equation*}
It is easy to check that $\det{\bf M}'(z=i|\eta|)\neq 0$.
Thus the complementing boundary condition holds. So $(\bar L_0,\bar B)$ is elliptic and hence so is $(\tfrac{1}{2}\bar L_0,\bar B)$. Therefore, we obtain the following elliptic estimate:
\begin{equation*}
\begin{split}
||(h,v)||_{C^{m,\a}}\leq C(||\tfrac{1}{2}\bar L_0(h,v)||_{C^{m-2,\a}}+||\bar B^{(i)}(h,v)||_{C^{m-k_i,\a}}+||(h,v)||_{C^0}).
\end{split}
\end{equation*}
The interior operator $\bar L$ and $L_0$ differ by 
\begin{equation*}
\begin{split}
\bar L(h,v)-\tfrac{1}{2}\bar L_0(h,v)=
\big(
-\frac{1}{2}(\Delta \tr h+\delta\delta h)g-\frac{3}{2}D^2\tr h-8D^2v,~
\tfrac{7}{2}\D v-2\delta\delta h+\D(\tr h)
\big).
\end{split}
\end{equation*}
So elliptic estimate for $\bar P$ will hold if we can control $||\delta\delta h||_{C^{m-2,\a}(M)},~||\tr h||_{C^{m,\a}(M)}$ and $||v||_{C^{m,\a}(M)}$ by $\bar P(h,v)$ similarly as in \eqref{dh}.
Taking the divergence of the first term of $\bar L(h,v)$ we get:
\begin{equation*}
\begin{split}
\delta[\bar L^{(1)}(h,v)]=\delta\delta^*(\delta h -d\tr h-8d v)
\end{split}
\end{equation*}
inside which we use the Bianchi identity as in the proof for $\w P$.
Note the expression above can be taken as $\delta\delta^*$ -- an elliptic operator -- acting on the term $(\delta h -d\tr h-8d v)$ whose Dirichlet boundary data is included in $\bar B(h,v)$. Thus $(\delta h -d\tr h-8d v)$ is controlled by $\bar P^*(h,v)$ as well as ${\bf E}'_h$.
So we get control of $(\delta\delta h-\Delta \tr h-8\Delta v)$. Combining this with the trace $\tr {\bf E}'_h=-\tfrac{1}{2}(\D\tr h+\d\d h)$ and second component of $\bar L(h,v)$, we see that $||\Delta v||_{C^{m-2,\a}(M)}$ and $||\delta\delta h||_{C^{m-2,\a}(M)}$ and $||\Delta\tr h||_{C^{m-2,\a}(M)}$ are all controlled by $\bar P$. 
Then we can derive control of $||v||_{C^{m,\alpha}(M)}$ and $||h^T||_{C^{m,\alpha}(\p M)}$ by analyzing the Gauss equation at $\partial M$, in the same way as for $\w P$. So it remains to obtain boundary condition for $\tr h$. By the formula of variation of mean curvature we have 
$$\mathbf n(\tr h)=2H'_h-\delta^T(h(\mathbf n)^T)-(\delta h)(\mathbf n)+O_0(h).$$
In the equation above, $\delta h(\mathbf n)$ is not controlled, but we have control of the boundary data $(\delta h-d\tr h-8dv)$ with $dv$ is already controlled.  So we can rewrite the equation above as:
\begin{equation}\label{G10}
2\mathbf n(\tr h)=2H'_h-\delta^T(h(\mathbf n)^T)-[(\delta h-d(\tr h)](\mathbf n)+O_0(h).
\end{equation}
In addition, basic computation yields $-(\delta h-d\tr h)^T=\nabla_{\mathbf n}h(\mathbf n)^T+\delta^T(h^T)+\nabla^T\tr h+O$ inside which $\d h-d\tr h$ and $h^T$ are both controlled on $\p M$.
So we get control of $\nabla_{\mathbf n}h(\mathbf n)^T+\nabla^T\tr h$ on $\partial M$ and hence also its tangential divergence
\begin{equation}\label{G20}
\delta^T[\nabla_{\mathbf n}h(\mathbf n)^T+\nabla^T\tr h]=\nabla_{\mathbf n}[\delta^T(h(\mathbf n)^T)]+\Delta_{g^T}\tr h.
\end{equation}
Combining \eqref{G10} and \eqref{G20}:
\begin{equation*}
\begin{split}
2\mathbf n\mathbf n(\tr h)-\Delta_{g^T}\tr h=2\mathbf n(H'_h)-[\delta^T(\nabla_{\mathbf n}h(\mathbf n)^T+\nabla^T\tr h)]-\mathbf n[(\delta h-d(\tr h))(\mathbf n)]+O_1(h).
\end{split}
\end{equation*}
Now every term on the righthand side of the equation above is under control.
Finally recall that $\Delta \tr h$ is controlled by $\bar P$ and it is elliptic when combined with the boundary term $2\mathbf n\mathbf n(\tr h)-\Delta_{g^T}\tr h$. Therefore, $\tr h$ is also controlled by $\bar P$. This completes the proof of elliptic estimate for $\bar P$.
\subsection{Boundary behavior of the generalised Killing vector field} In the following we show that a generalised Killing vector field $(X^0,X)$ is $C^{m,\a}$ smooth up to the boundary $\p M$, which is used at the end of the proof of Theorem 3.2. Suppose $(X^0,X)$ is a weak solution of $D\Phi_{(g,K)}^*(X^0,X)=0$. So it is $C^{m,\alpha}$ in the interior ${\rm int}M$ of $M$ and satisfies 
\be\label{A1}
\begin{split}
\begin{cases}
2X^0K+L_Xg=0\\
D^2X^0+L_XK+X^0\big(-Ric_g+2K\circ K-(trK)K+\frac{1}{4}ug\big)=0
\end{cases}
\mbox{ in int}M.
\end{split}
\ee
Recall that $(g,K)\in\cB$ and $\Phi(g,K)=(u,Z)$.
We can apply the approach in \cite{BC} here to show that $(X^0,X)$ is $C^{m,\a}$ smooth up to the boundary $\p M$. From the equations above we can obtain
\be\label{A2}
\begin{split}
&\nabla_i\nabla_jX^0=-L_XK_{ij}+X^0\big(-Ric_g+2K\circ K-(trK)K+\frac{1}{4}ug\big)_{ij}\\
&\nabla_i\nabla_jX_k=R_{kjmi}X^m+D_k\big(X^0K_{ij}\big)-D_i\big(X^0K_{jk}\big)-D_j\big(X^0K_{ki}\big),
\end{split}
\ee
Here we use $R_{kjmi}$ to denote the curvature tensor of $g$. The second equation is obtained by taking convariant derivative of the first equation in \eqref{A1} and applying the Bianchi identity $R_{kijm}+R_{ijkm}+R_{jkim}=0$ with $R_{kimj}X^m=\nabla_k\nabla_iX_j-\nabla_i\nabla_kX_j$.
Recall that $ r$ is the radius function on $M$ obtained by pull-back from $\mathbb R^3\setminus B$. So $ r\in [1,+\infty)$ and $ r=1$ on $\partial M$. Let $s=1- r$. So $s\in(-\infty,0]$, and $\p_s=\tfrac{x^i}{s-1}\p_i$ in the Cartesian coordinates. We consider the limit $\lim_{s\to 0^-}(X^0,X)$. Derivatives of $(X^0,X)$ are given by
\begin{align*}
&{\partial_s}X^0=\frac{x^i}{s-1}\partial_i X^0,\ \ \partial_s X_i=\frac{x^j}{s-1}(\nabla_jX_i+\Gamma_{ji}^kX_k),\\
&\partial_s\nabla_iX^0=\frac{x^j}{s-1}(\nabla_j\nabla_iX^0+\Gamma_{ji}^k\nabla_kX^0),\ \
\partial_s\nabla_iX_j=\frac{x^k}{s-1}(\nabla_k\nabla_iX_j+\Gamma_{ki}^l\nabla_lX_j+\Gamma_{kj}^l\nabla_iX_l).
\end{align*}
Here $X_i=g_{ik}X^k$; and the covariant derivative $\nabla$ and Christoffel symbol $\Gamma_{ij}^k$ are with respect to the metric $g$. Notice that the terms $\nabla_k\nabla_iX_j$ and $\nabla_k\nabla_iX_j$ can be replaced by lower derivatives based on \eqref{A2}. So define $f=(X^0,\nabla X^0,X,\nabla X)$ and let $F=|f|^2$. The equations above imply that 
$|\frac{\partial F}{\partial s}|\leq CF\text{ for some constant }C>0$. Then integration yields
\begin{equation*}
\begin{split}
\partial_s(e^{-Cs}F)\leq 0\Rightarrow e^{-Cs_1}F(s_1)\leq e^{-Cs_2}F(s_2)~\forall s_1\geq s_2.
\end{split}
\end{equation*}
Thus $F(s)\leq e^{Cs+C}F(-1)+C~\mbox{ for all }0>s>-1$. So $\lim_{s\to 0^-}F(s)\leq C$. Take an open neighborhood of the boundary $\partial M\subset U\subset M$. Then $X^0,\nabla X^0,X,\nabla X$ are uniformly bounded in $U\setminus\partial M$. Henceforth, $\nabla_i\nabla_jX^0$ and $\nabla_i\nabla_jX$ are also uniformly bounded in $U\setminus \p M$ according to \eqref{A2}. This in return implies $\nabla X^0,\nabla X$ are uniformly continuous in $U\setminus \p M$
and so is $(X^0,X)$. Based on \eqref{A2} again, $\nabla^2X^0,\nabla^2X$ are also uniformly continuous. Therefore, we can extend $(X^0,X)$ to be well-defined and second order differentiable in $U$. In fact by a bootstrap argument it can be extended to a $C^{m,\alpha}$ fields in $U$ and by continuity we also have $D\Phi^*|_{(g,K)}(X^0,X)=0$ up to the boundary.
\subsection{Aymptotic behavior of Killing vector field} Finally, we provide a detailed discussion on the asymptotical behavior of elements which belong to the dual space $(\cT)^*$ and also the kernel of the adjoint $D\Phi^*$.  Here we work with the spacetime vector $(\w X^0,\w X)$ in the proof of Theorem 3.2. The same discussion works well for the element $\hat X$ at the end of proof of surjectivity in \S2.1. 

It is shown in the proof of Theorem 3.2 that $(X^0,X)$ is a $C^{m,\a}$ solution of $D\Phi^*(X^0,X)=0$. So according to Proposition 2.1 of \cite{BC}, its asymptotic behavior must be as in \eqref{ax1} or \eqref{ax2}. Suppose the spacetime vector $\w X=\xi_0-X$ is not asymptotically zero. Then there exist constants $\Lambda_{\mu\nu}$ not all zero such that
\begin{equation}\label{ax41}
\begin{split}
\w X^i=-(\xi_0)_{\infty}^i+\Lambda_{ij}x^j+O_m(r^{1-\delta}),\quad \w X^0=-(\xi_0)_{\infty}^0+\Lambda_{0i}x^i+O_m(r^{1-\delta});
\end{split}
\end{equation}
or constants $A^{\mu}$ with $A^{\nu}-(\xi_0)_{\infty}^{\nu}$ not all zero such that 
\begin{equation}\label{ax42}
\begin{split}
\w X^i=-(\xi_0)_{\infty}^i+A^i+O_m(r^{-\delta}),\quad \w X^0=-(\xi_0)_{\infty}^0+A^0+O_m(r^{-\delta}).
\end{split}
\end{equation}
Here we adopt the notation in \cite{BC} that $f=O_m(r^{-\d})$ if there exists a constant $C$ such that 
$$|\p_{x_1}^{i_1}\p_{x_2}^{i_2}...\p_{x_n}^{i_n}f|\leq Cr^{-\d-|i|}\mbox{ for all } |i|= i_1+i_2+...+i_n\in\{0,1,..,m\}.$$
This is equivalent to that $f\in C^m_\d(M)$.

Since both $X$ and $\xi_0$ are $C^{m,\a}$ smooth on $M$, so is every component $\w X^{\mu},~(\mu=0,1,2,3)$ of $\w X$.
In addition, $\w X^{\mu}$ is a bounded functional on $C^{m-2,\alpha}_{\delta+2}(M)$, so for any function $v\in C^{m-2,\alpha}_{\delta+2}(M)$ the $L^2$-pairing
$\int_{M}\w X^{\a}\cdot vd\vol_g$ must be finite.
If $\w X$ behaves as in \eqref{ax41}, without loss of generality we can assume $\wedge_{01}\neq 0$. Let $v=v(r)$ be the smooth positive function which equals to zero near the interior boundary and equals to $\frac{\Lambda_{0i}x^i}{r^{\beta+3}}~(\beta>\delta)$ for $r>R$. Then $v\in C^{m-2,\alpha}_{\delta+2}(M)$ and 
\begin{equation*}
\begin{split}
\int_{r>R}\w X^0\cdot vd\vol_g
&=\int_R^{\infty}\int_{S^2}\frac{\Lambda_{0i}x^i}{r^{\beta+3}}\w X^0r^2ds^2dr
=\int_R^{\infty}\int_{S^2}\frac{\Lambda_{0i}x^i}{r^{\beta+1}}\big(-(\xi_0)_{\infty}^0+\Lambda_{0i}x^i+O(r^{1-\delta})\big)ds^2dr\\
&=\int_R^{\infty}\int_{S^2}\frac{r^2}{r^{\beta+1}}(\wedge_{0i}\tfrac{x^i}{r})^2ds^2dr-\int_R^{\infty}\int_{S^2}\frac{\wedge_{0i}x^i}{r^{\beta+1}}(\xi_0)_{\infty}^0+\int_R^{\infty}\int_{S^2}O(r^{1-\delta-\beta})ds^2dr.
\end{split}
\end{equation*}
Obviously if $1<\beta\leq 2$, the above integral diverges which contradicts that $\w X^0$ is a bounded functional. 

If $\w X$ behaves as in \eqref{ax42}, again without loss of generality we assume $A^0-(\xi_0)_{\infty}^0\neq 0$. Let $v=v(r)$ be a smooth positive function which equals to zero near the interior boundary and equals to $\frac{1}{r^{\beta+2}}~(\beta>\delta)$ for $r>R$. Then 
\begin{equation*}
\begin{split}
\int_{r>R}\w X^0\cdot vd\vol_g
&=\int_R^{\infty}\int_{S^2}\frac{1}{r^{\beta+2}}\w X^0r^2ds^2dr
=\int_R^{\infty}\int_{S^2}\frac{1}{r^{\beta}}(-(\xi_0)_{\infty}^0+A^0+O(r^{-\delta}))ds^2dr\\
&=(-(\xi_0)_{\infty}^0+A^0)\int_R^{\infty}\int_{S^2}\frac{1}{r^{\beta}}ds^2dr+\int_R^{\infty}\int_{S^2}O(r^{-\delta-\beta})ds^2dr
\end{split}
\end{equation*}
For $\text{max}\{\delta,1-\delta\}<\beta<1$, the above integral diverges which also yields a contradiction. So $\w X$ must decay to zero asymptotically.

\medskip

\bibliographystyle{plain}

\end{document}